\let\ds\displaystyle
\newtheorem{theorem}{Theorem}
\newtheorem{corollary}[theorem]{Corollary}
\newtheorem{proposition}[theorem]{Proposition}
\newtheorem*{claim*}{Claim}
\numberwithin{equation}{section}
\numberwithin{theorem}{section}
\newcommand{\R}{\mathbb{R}}
\newcommand{\C}{\mathbb{C}}
\newcommand{\N}{\mathbb{N}}
\newcommand{\dist}{\mathrm{dist}}
\renewcommand{\Re}{\mathrm{Re }}
\renewcommand{\Im}{\mathrm{Im }}
\title{Connecting orbits for delay differential equations\\ with unimodal feedback}
\author[1]{\textbf{G\'abor Benedek}}
\affil[1]{Bolyai Institute, University of Szeged, Aradi v\'ertan\'uk tere 1, Szeged, H-6720, Hungary}
\author[2]{\textbf{Tibor Krisztin}}
\affil[2]{HUN-REN-SZTE Analysis and Applications Research Group, Bolyai Institute, University of Szeged, Aradi v\'ertan\'uk tere 1, Szeged, H-6720, Hungary}
\date{\today}
\begin{document}
	
	\maketitle	
	
	\begin{abstract}
		This paper considers a class of delay differential equations 
		with unimodal feedback and describes the structure of certain unstable 
		sets of stationary points and periodic orbits. 
		These unstable sets consist of heteroclinic connections 		
		from stationary points and periodic orbits to 
stable stationary points, stable periodic orbits and some more complicated compact invariant sets. 
		
		A prototype example is the Mackey--Glass type equation
		$y'(t)=-ay(t)+b \frac{y^2(t-1)}{1+y^n(t-1)}$
		having three stationary solutions $0$, $\xi_{1,n}$ and $\xi_{2,n}$ 
		with $0<\xi_{1,n}<\xi_{2,n}$, provided $b>a>0$, and $n$ is large. 
The 1-dimensional leading unstable set 
		$W^u(\hat{\xi}_{1,n})$ of the stationary point $\hat{\xi}_{1,n}$ 
	is decomposed into three disjoint orbits, 	
	$W^u(\hat{\xi}_{1,n})=W^{u,-}(\hat{\xi}_{1,n})\cup \{\hat{\xi}_{1,n}\} \cup W^{u,+}(\hat{\xi}_{1,n})$. 
Here $\hat{\xi}_{1,n}$ is a constant function in the phase space with value $\xi_{1,n}$.  
$W^{u,-}(\hat{\xi}_{1,n})$ is a connecting orbit from $\hat{\xi}_{1,n}$ 
to $\hat{0}$. 
There exists a threshold value $b^*=b^*(a)>a$ such that,  
		in case $b\in (a,b^*)$, 
		$W^{u,+}(\hat{\xi}_{1,n})$ 
		connects $\hat{\xi}_{1,n}$ to $\hat{0}$;  
		and in case $b>b^*$, $W^{u,+}(\hat{\xi}_{1,n})$ connects $\hat{\xi}_{1,n}$ to 
		a compact invariant set $\mathcal{A}_n$ not containing 
		$\hat{0}$ and $\hat{\xi}_{1,n}$. 
		Under additional conditions, there is a stable periodic orbit 
		$\mathcal{O}^n$ with $\mathcal{A}_n= \mathcal{O}^n$. 
		Analogous results are obtained for the 2-dimensional leading 
		unstable sets $W^u(\mathcal{Q}^n)$ of periodic orbits 
		$\mathcal{Q}^n$ close to $\hat{\xi}_{1,n}$, 
		establishing connections from $\mathcal{Q}^n$ to $\mathcal{O}^n$. 	
	\end{abstract}
	
	
	\setcounter{footnote}{0}
	
	\section{Introduction}\label{sec1}
	
	In this paper we study delay differential equations of the form
	\begin{equation}\label{eqn:Ef}
		\tag{$E_{f}$}
		y'(t) = -ay(t) + b f\left(y(t-1)\right),
	\end{equation}
	with positive parameters $a,b$ and a smooth function $f:[0,\infty)\to[0,1]$ satisfying $f(0)=0$.
	These equations appear in several applications; see, e.g., \cite{Glass,Kyrychko,Rihan}. 
	
	For monotone nonlinearities there are many technical tools to obtain detailed information on the dynamics, and the behavior 
	is relatively simple; see, e.g., \cite{K1,K5,KWW,MPS}.   
	Nonmonotone nonlinearities, such as unimodal $f$, can generate complicated dynamics. 
	The functions $f(\xi)=\xi^k e^{-\xi}$ and 
	$f(\xi)=\frac{\xi^k}{1+\xi^n}$ with $n>k>0$ are examples of  
	unimodal nonlinearities; they were introduced in 1977 by Lasota \cite{Lasota} and by Mackey and Glass \cite{MC} to model the feedback control of blood cells. 
	The paper \cite{MC} motivated intensive research on the theory of 
	dynamical systems, since numerical results demonstrated that the above nonlinearities can create complex dynamical behavior; see \cite{Walther}. 
	
	There are several theoretical results on the dynamics generated by \eqref{eqn:Ef} showing stability of equilibria,
	existence of oscillations and periodic solutions, local and global Hopf bifurcations,
	and establishing certain connecting orbits; see, e.g., 
	\cite{BBI,Kuang,Smith,Ruan,Wei}. Further results include estimating the size of the global attractor 
	(\cite{FCP,LizRost1,LizRost2,RostW}) and proving bistable behavior (\cite{HYZ,HWW,Lin,LizRH}).
	However, so far the rigorous mathematical results explain only relatively simple dynamical behavior of \eqref{eqn:Ef}. On the other hand, numerical works show high complexity of the dynamics for some parameter values; see 
	\cite{GS,DH,Morozov}.
	
	The papers \cite{BKV,BKSZ} introduced a new approach to the study of \eqref{eqn:Ef} with nonlinearities such as $f(\xi)=\frac{\xi^k}{1+\xi^n}$. 
	The idea of \cite{BKV,BKSZ} is as follows.
	Letting $n\to\infty$,  
	$f(\xi)=\frac{\xi^k}{1+\xi^n}$ converges pointwise to the discontinuous $g:[0,\infty)\to[0,1]$ defined by $g(\xi)=\xi^k$ for $0\le\xi<1$ and $g(\xi)=0$ for $\xi>1$. 
	The limiting equation 
	\begin{equation}\label{eqn:Eg}
		\tag{$E_{g}$}
		x'(t) = -cx(t) + d g\left(x(t-1)\right),
	\end{equation}
	with $d>c>0$, is a particular 
	nonlinear equation with a discontinuous unimodal feedback function. 
	Its dynamics is still nontrivial. However, the fact that $g(\xi)=0$ for $\xi>1$ 
	allows one to prove several interesting dynamical properties rigorously. 
	In \cite{BKV,BKSZ} the existence of stable periodic orbits was established for \eqref{eqn:Eg} and, under suitable closeness assumptions, for \eqref{eqn:Ef} as well.   
	In particular, for \eqref{eqn:Ef} with the Mackey--Glass‐type nonlinearity $f(\xi)=\frac{\xi^k}{1+\xi^n}$, the required closeness of $f$ to $g$ is satisfied for all sufficiently large $n$. 
	An interesting feature observed in \cite{BKV,BKSZ} is that the projections 
	$\mathbb{R}\ni t\mapsto \left(p(t),p(t-1)\right)\in\mathbb{R}^2$ of the obtained periodic solutions
	can produce complicated-looking figures, similar to those long believed to be signatures of chaos in numerical studies, although
	the corresponding periodic orbits are stable.
	
	This paper continues the investigation of \eqref{eqn:Ef} under the assumption that \eqref{eqn:Ef} is close to a discontinuous equation of the form 
	\eqref{eqn:Eg}. Our motivation comes from works studying the delay differential equation 
	\begin{equation}\label{eqn:Morozov}
		z'(t)=- \alpha z(t)  +  \beta e^{-\delta\tau}\frac{z^2(t-\tau)}{1 + \gamma z^n(t-\tau)}
	\end{equation}
	with positive parameters $\alpha,\beta,\gamma,\delta,\tau$ and $n$.  
	This equation models a stage-structured population incorporating both a strong Allee effect and a maturation delay. 
	It is well known from numerical results that \eqref{eqn:Morozov} can exhibit rich dynamics: multiperiodic orbits, long-term transients, and complicated attractors. The authors of \cite{Morozov} concluded that small variations in the maturation delay or initial population size can qualitatively alter the eventual fate of the population.
	By the transformation $y(t) = \gamma^{1/n}z(\tau t)$ and 
	$a=\alpha\tau$, $b=\beta\gamma^{-1/n}e^{-\delta\tau}\tau$, 
	equation \eqref{eqn:Morozov} becomes our prototype equation 
	\begin{equation}\label{eqn:proto}
		y'(t) = -ay(t) + b \frac{y^2(t-1)}{1 + y^n(t-1)},
	\end{equation}
	with only three parameters $a,b,n$. 
	We prove rigorously that, for certain parameter values,  
	\eqref{eqn:proto} exhibits most of the dynamical features observed numerically. 
	
	To formulate the closeness of an equation of the form 
	\eqref{eqn:Ef} to one of the form \eqref{eqn:Eg}, 
	it is convenient to consider not a single equation \eqref{eqn:Ef} but rather a sequence of equations
	\begin{equation}\label{eqn:Efn}
		\tag{$E_{f_n}$}
		y'(t) = -a_n y(t) + b_n f_n\left(y(t-1)\right),
	\end{equation}
	where, for each $n\in\mathbb{N}$, $a_n>0$, $b_n>0$, and 
	$f_n: [0,\infty)\to [0,1)$ is a smooth function satisfying $f_n(0) = 0$. 
	Under additional hypotheses, the required closeness will be satisfied for all sufficiently large $n$. 
	
	For a subset $J\subset\mathbb{R}$ and a function $u:J\to\mathbb{R}$ set $\|u\|_J=\sup_{\xi\in J}|u(\xi)|.$ 
	For a continuous function $u:I\to\mathbb{R}$ defined on 
	an interval $I\subset\mathbb{R}$, and for $t\in I$ with $t-1\in I$, the 
	segment $u_t\in C([-1,0],\mathbb{R})$ is given by $u_t(s)=u(t+s)$, $s\in[-1,0]$.
	For $\xi\in [0,\infty)$, set $\hat{\xi}\in C([-1,0],\mathbb{R})$ by $\hat{\xi}(s)=\xi$, $s\in[-1,0]$. 
	We write $C=C([-1,0],\mathbb{R})$ and use the partial orderings $\phi\le \psi$ and $\phi\ll \psi$ on $C$ defined by $\phi(s)\le\psi(s)$ for all $s\in[-1,0]$ and 
	$\phi(s)<\psi(s)$ for all $s\in[-1,0]$, respectively.

Introduce the following assumption on $g$ and the parameters $c,d$ in  equation \eqref{eqn:Eg}:
\begin{enumerate}
	\item[$(C_g)$] The function $g:[0,\infty)\to[0,1]$ and the parameters $d>c>0$ satisfy:
	\begin{itemize}
		\item[(a)] The restriction $g|_{[0,1]}$ is $C^1$-smooth, $g(\xi) = 0$ for all $\xi > 1$, 
		$g(0) = g'(0)=0$, $g(1) = 1$, and $g'(\xi)> \frac{g(\xi)}{\xi}$ holds for all $\xi \in(0, 1]$,  where  $g'(1)$ denotes the left hand derivative of $g$ at 1.
		\item[(b)] Equation \eqref{eqn:Eg} admits a periodic solution $p: \mathbb{R} \to (0, \infty),$ with minimal period $\omega_p$, 
		such that 
		 $p(0) = 1$, and $p(t) > 1$ for all $t \in [-1, 0)$, and 
		 $(p(t), p(t-1)) \neq \left(1, g^{-1}(\tfrac{c}{d})\right)$ for all $t \in (0,\omega_p]$.	
		\item[(c)] 	There exists a $j \in \mathbb{N}$ such that, for the unique solution $\Theta_j$ of the equation $\Theta_j  =  - c \tan \Theta_j$ in the interval 
	$\Bigl(2j\pi - \tfrac{\pi}{2},  2j\pi\Bigr)$, the equality
	$c=d  	 g'(\xi_1)  \cos \Theta_j $
	is valid.	
	\end{itemize}
\end{enumerate}

Note that $(C_g)(a)$ implies that  $\frac{g(\xi)}{\xi}$ is strictly increasing in 
$(0,1]$, and $\frac{g(\xi)}{\xi}\to g'(0)=0$ as $\xi\to 0+$. 
Then, for any $d>c>0$, the function $[0,1]\ni\xi\mapsto -c\xi+dg(\xi)\in\R$ 
has exactly two zeros, $0$ and $\xi_1\in (0,1)$. 
$g^{-1}$ in condition $(C_g)(b)$ denotes the inverse of  $g|_{[0,1]}$.  
 In the papers \cite{BKV,BKSZ} assumption 
$(C_g)(b)$ was one of the key properties of the limiting equation \eqref{eqn:Eg} 
 to show the existence of stable periodic orbits for nearby equations.   
Under condition $(C_g)(c)$ a local Hopf bifurcation theorem can be applied.   


The following assumption on the sequence of equations \eqref{eqn:Efn} 
will guarantee that  \eqref{eqn:Efn} is close to \eqref{eqn:Eg} for all 
sufficiently large $n$. 
\begin{enumerate}
\item[$(C_{f_n})$] The sequences $(a_n)_{n=1}^\infty$ and $(b_n)_{n=1}^\infty$ of positive reals, and the sequence of functions $(f_n)_{n=1}^\infty$ satisfy:
\begin{itemize}
	\item[(a)] $a_n  \to  c$ and  $b_n  \to  d$  as $n  \to \infty$.
	\item[(b)]
Each $f_n: [0,\infty)\to [0,1)$ is $C^1$-smooth with $f_n(0) = 0$ and $f_n'(\xi)>0$ for $\xi\in (0,1/2]$. 
For any $\kappa \in (0,1)$,
	$$
	\bigl\| f_n - g\bigr\|_{[0, 1-\kappa]  \cup [1+\kappa, \infty)} 
	 + 
	\bigl\| f_n' - g'\bigr\|_{[0, 1-\kappa]  \cup [1+\kappa, \infty)}  
	 \to  0
	\quad \text{as } n\to\infty,
	$$
	and for any $\kappa \in (0,1)$ and $m\in\N$,
	$$
	\bigl\| f_n'\bigr\|_{[1+\kappa,\infty)}
	 \cdot
	\Bigl( \bigl\|f_n'\bigr\|_{[0,\infty)}
	\Bigr)^m
	 \to  0
	\quad \text{as } n\to\infty.
	$$
	\item[(c)] The restriction $f_n|_{(0,\infty)}$ is $C^2$-smooth. 
\end{itemize}
\end{enumerate}

Under conditions $(C_g)(a)$ and $(C_{f_n})(a)(b)$ 
solutions of equations \eqref{eqn:Eg} and \eqref{eqn:Efn} are 
defined as solutions of corresponding integral equations, 
see Section \ref{sec2}. 
As we are interested in nonnegative solutions, the natural phase space 
to study equations  \eqref{eqn:Efn} and \eqref{eqn:Eg} is 
$$
C^+=\{\psi\in C([-1,0],\mathbb{R}): \psi(s)\ge 0,\ -1\le s\le 0\}.
$$ 
For  $\psi\in C^+$, there exist unique solutions 
$x^\psi:[-1,\infty)\to \R$ and $y^\psi:[-1,\infty)\to \R$ 
of equations \eqref{eqn:Eg} and \eqref{eqn:Efn} with 
$x_0^\psi=\psi$ and $y_0^\psi=\psi$, respectively.
The solutions of equations \eqref{eqn:Eg} and \eqref{eqn:Efn} define 
the  semiflows 
$$
\Gamma:[0,\infty)\times C^+\ni (t,\phi)\mapsto x_t^\phi\in C^+ \text{ and }
\Phi^n:[0,\infty)\times C^+ \ni (t,\psi)\mapsto y_t^\psi\in C^+.
$$
We remark that $\Phi^n$ is continuous, however $\Gamma$ is not continuous. 

For $\kappa_2>\kappa_1\ge 0$ and $L>0 $, define $C_{\kappa_1,\kappa_2}=
\{\psi\in C^+: \kappa_1\le \psi(s)\le\kappa_2,\ -1\le s\le 0\}$
 and 
$C_{\kappa_1,\kappa_2}^L=
\{\psi\in C_{\kappa_1,\kappa_2}: \psi\in C_{\kappa_1,\kappa_2},\ 
|\psi(s_1)-\psi(s_2)|\le L|s_1-s_2|,\ -1\le s\le 0\}$. 
In Section \ref{sec2} we show that $C_{0,d/c}$ and $C_{0,2d/c}^{2d}$ 
are positively invariant under $\Gamma$ and $\Phi^n$, respectively. 

The zeros of $[0,\infty)\ni\xi\mapsto -c\xi+bg(\xi)\in\R$ and 
$[0,\infty)\ni\xi\mapsto -a_n\xi+b_nf_n(\xi)\in\R$
determine the stationary points of the solution semiflows
$\Gamma$ and $\Phi^n$, respectively. 
Hypotheses $(C_g)(a)$ implies that ${\hat 0}$ and $\hat\xi_1$
are stationary points of $\Gamma$. 
Under conditions $(C_g)(a)$ and $(C_{f_n})(a)(b)$, for any  $\kappa\in (\xi_1,1)$, and for all sufficiently large $n$,  the only zeros of the maps $[0,\kappa]\ni\xi\mapsto -a_n\xi +b_nf_n(\xi)\in \R$ are  $0$ and $\xi_{1,n}$, moreover,  
$ \xi_{1,n}\to \xi_1$ as $n\to\infty$, see Proposition \ref{prop:zeros-fn}. 
Consequently, for all sufficiently large $n$, $\hat{0}$ and   $\hat\xi_{1,n}$  are the only stationary points of $\Phi^n$ in  
the set $C_{0,\kappa}$. 
Notice that, for large $n$, $\Phi^n$ has at least 
one more stationary point $\hat\xi_{2,n}$ with $\xi_{2,n}>\kappa$, 
and $\xi_{2,n}\to 1$ as $n\to\infty$. 

For a solution $y:\R\to\R$ of equation \eqref{eqn:Efn}  and for subsets $S_-,S_+$ of $C^+$ we say that the orbit $\{y_t:t\in\R\}$ is a connecting orbit from $S_-$ to $S_+$ if $\mathrm{dist}( y_t,S_-)\to 0$ as 
$t\to-\infty$, and $\mathrm{dist}( y_t,S_+)\to 0$ as 
$t\to\infty$. Or, equivalently,  we  say that $\{y_t:t\in\R\}$ connects $S_-$ to $S_+$. 
Connecting orbits for  \eqref{eqn:Eg}) are defined analogously. 

Assuming $(C_g)(a)$ and $(C_{f_n})(a)(b)$, $\hat\xi_1$ is an unstable stationary point of $\Gamma$, and, for large $n$, $\hat\xi_{1,n}$ is also an unstable stationary point of $\Phi^n$. 
In Section \ref{sec4}, by considering the local 1-dimensional leading unstable manifolds 
$W^u_{\text{loc}}(\hat\xi_1)$ and  $W^u_{\text{loc}}(\hat\xi_{1,n})$ of $\hat\xi_1$  and  $\hat\xi_{1,n}$ for large $n$, respectively, 
we find solutions 
$x^-:\R\to\R$, $x^+:\R\to\R$ of equation \eqref{eqn:Eg} and 
solutions 
$y^{-,n}:\R\to\R$, $y^{+,n}:\R\to\R$ of equation \eqref{eqn:Efn} 
such that 
$$
x^\pm(t)\to\xi_1 \text{ and }y^{\pm,n}(t)\to\xi_{1,n} \text{ as }
t\to-\infty, 
$$
and 
 \begin{equation*}
\begin{aligned}
W^u_{\text{loc}}(\hat\xi_1)&=\{\hat\xi_1\}\cup \{x_t^-:t<s\}\cup\{x^+_t:t<s\},\\
 W^u_{\text{loc}}(\hat\xi_{1,n})&=\{\hat\xi_{1,n}\}\cup \{y_t^{-,n}:t<s_n\}
 \cup\{y^{+,n}_t:t<s_n\}
\end{aligned}
\end{equation*}
for some $s,s_n\in\R$. 
The forward extensions of the local unstable manifolds define the 1-dimensional leading unstable sets 
\begin{equation*}
W^u(\hat\xi_1)=
\{\hat\xi_1\}\cup W^{u,-}(\hat\xi_1)\cup W^{u,+}(\hat\xi_1),\quad 
 W^{u}(\hat\xi_{1,n})=\{\hat\xi_{1,n}\}\cup W^{u,-}(\hat\xi_{1,n})\cup 
 W^{u,+}(\hat\xi_{1,n})
 \end{equation*}
 where
\begin{equation}\label{Wu-repr}
\begin{aligned}
 W^{u,-}(\hat\xi_1)&=
\{x_t^-:t\in\R\}, \ W^{u,+}(\hat\xi_1)=\{x^+_t:t\in\R\},\\ 
W^{u,-}(\hat\xi_{1,n})&= \{y_t^{-,n}:t\in\R\},\ W^{u,+}(\hat\xi_{1,n})=
\{y^{+,n}_t:t\in\R\}.
\end{aligned}
\end{equation}
The sets $W^{u,\pm}(\hat\xi_1)$ and $W^{u,\pm}(\hat\xi_{1,n})$ 
are uniqe, and contain single entire orbits of $\Gamma$ and $\Phi^n$, 
respectively. 
The solutions $x^\pm$ and $y^{\pm,n}$ can be made unique 
by fixing their values at $t=0$ in the following way. 
Choose a $\kappa^+\in (0,1-\xi_1)$ and a $\kappa^-\in (-\xi_1,0)$. 
Then the global solutions $x^\pm$ and $y^{\pm,n}$ in \eqref{Wu-repr} are  
unique with the properties $x^\pm(0)=\kappa^\pm+\xi_1$ and 
$y^{\pm,n}(0)=\kappa^\pm +\xi_{1,n}$, see Section 4.

For $d>c>0$, there exist unique $t_1,t_2$ with $t_2>t_1+1>0$ such that 
$x^+(t_1)=x^+(t_2)=1$, $x^+$ strictly increases on $(-\infty,t_1+1]$, 
$x^+(t)>1$ for $t\in (t_1,t_2)$, and $x^+(t_2+s)=e^{-cs}$ for  $s\in[0,1]$.



Our first result considers equation \eqref{eqn:Eg}, and  
describes the structure of the closure $\overline{W^u(\hat\xi_{1})}$ of  $W^u(\hat\xi_{1})$.

\begin{theorem}\label{thm:Eg}
Suppose that condition $(C_g)(a)$ is satisfied. 
\begin{itemize}
\item[(A)] 
If $d>c$ then  $W^{u,-}(\hat\xi_1)$ connects $\hat\xi_1$ to $\hat 0$.
\item[(B)] 
There exists a unique $d^*=d^*(c)>c$ with the following properties.
\begin{itemize}
\item[(i)] 
If $d\in (c,d^*)$ then $W^{u,+}(\hat\xi_1)$ connects $\hat\xi_1$ to $\hat 0$.
\item[(ii)] 
If $d>d^*$, then there exist  $m_0,m_1$ with $0<m_0<1<m_1\le \frac{d}{c}$, 
and a $\sigma>0$  
such that $W^{u,+}(\hat\xi_1)\subset C_{m_0,m_1}^{2d} $, 
and, for all $T\ge t_2$, the intervals $[T,T+\sigma]$ contain at least one 
$t$ with $x^+(t)=1$.   
The $\omega$-limit set $\mathcal{A}=\omega_\Gamma(x^{+}_{t_2+1})$ is a nonempty 
compact subset of $C_{m_0,m_1}^{2d} $ with $x^+_t\to \mathcal{A}$ 
as $t\to\infty$, and 
$\hat{0}\notin \mathcal{A}$, $\hat{\xi}_1\notin\mathcal{A}$.
\item[(iii)] 
If $d>d^*$ and $(C_g)(a)(b)$ are satisfied, then $x^+_t\in \mathcal{O}=
\{p_t:t\in [0,\omega_p]\}$ for all $t\ge t_2+1$, and $\mathcal{A}=\mathcal{O}$.  
 \end{itemize} 
\end{itemize}
\end{theorem}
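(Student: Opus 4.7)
The approach exploits two structural features of $(E_g)$ under $(C_g)(a)$: first, $g|_{[0,1]}$ is strictly increasing (from $g'(\xi)>g(\xi)/\xi>0$), so $\Gamma$ is a positive-feedback monotone semiflow on the order interval $C_{0,1}$; second, once $x^+$ completes its first pulse above $1$, the delayed feedback vanishes and the equation reduces to $x'=-cx$ on $[t_2,t_2+1]$, producing an explicit \emph{reset} segment $R(s)=e^{-c(1+s)}$ that depends only on $c$.

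For (A), I would first check that the characteristic equation $\lambda=-c+dg'(\xi_1)e^{-\lambda}$ at $\hat\xi_1$ has, using $dg'(\xi_1)>c$, a real leading eigenvalue $\lambda_0>0$ with strictly positive eigenvector $v\gg 0$ given by $s\mapsto e^{\lambda_0 s}$. Consequently $W^{u,-}(\hat\xi_1)$ is tangent to $-v$ at $\hat\xi_1$, so $x^-_t\ll\hat\xi_1$ for $t$ near $-\infty$; monotonicity of $\Gamma$ together with the invariance of $\hat 0,\hat\xi_1$ gives $\hat 0<x^-_t\ll\hat\xi_1$ for all $t\in\mathbb{R}$, and the direction of motion along the unstable manifold makes $x^-_t$ nonincreasing in $t$. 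Boundedness and the Mallet-Paret--Sell theory of monotone cyclic feedback systems then force $\omega_\Gamma(x^-_0)$ to be a stationary point, and the instability of $\hat\xi_1$ leaves only $\hat 0$.

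For (B)(i)--(ii), I would reduce the analysis to the post-pulse dynamics: let $z(\cdot;d)$ denote the $(E_g)$-solution with initial segment $R$. A positive-feedback comparison in $d$ shows $d\mapsto z(t;d)$ is nondecreasing as long as $z\le 1$. On $[0,1]$, where $z(t-1)=e^{-ct}$ is known explicitly, solving the resulting linear ODE gives $z(1;d)=e^{-2c}+de^{-c}\int_0^1 e^{cs}g(e^{-cs})\,ds$, which exceeds $1$ for $d$ large; a direct estimate near $d=c$ (where $\xi_1\uparrow 1$) shows $z(\cdot;d)<1$ on $[0,\infty)$ for $d-c$ small. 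Setting $d^*:=\sup\{d>c:z(\cdot;d)<1\text{ on }[0,\infty)\}$, the monotonicity in $d$ yields uniqueness of $d^*$ and $d^*\in(c,\infty)$. For $d\in(c,d^*)$, the orbit $\{z_t(\cdot;d)\}$ is contained in $C_{0,1}$, whose only stationary points are $\hat 0$ and $\hat\xi_1$; applying the Mallet-Paret--Sell theorem together with a strict-monotonicity exclusion of $\hat\xi_1$ yields $z(\cdot;d)\to 0$, which gives (i). For $d>d^*$, $z(\cdot;d)$ attains $1$ at some first time $t^{(1)}>0$; repeating the pulse analysis, $z$ stays above $1$ on an interval of length $>1$ and then enters a new reset, so $z$ undergoes recurring $1$-crossings with bounded gaps. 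This produces $m_0,m_1,\sigma$ and the invariance of $C_{m_0,m_1}^{2d}$, proving (ii).

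For (B)(iii), applying the same reset argument to $p$ yields $p(t)=e^{-ct}$ on $[0,1]$, hence $p_1=R$. Since $z(\cdot;d)$ solves $(E_g)$ with the same initial segment $R=p_1$, uniqueness of solutions (via the integral formulation of Section~\ref{sec2}) gives $z(t;d)=p(t+1)$ for all $t\ge 0$, i.e.\ $x^+_t=p_{t-t_2}$ for $t\ge t_2+1$, so $x^+_t\in\mathcal{O}$ and $\mathcal{A}=\mathcal{O}$. The main technical obstacle lies in (B)(i)--(ii): while the monotonicity in $d$ and the construction of $d^*$ are clean, establishing $z(\cdot;d)\to 0$ for $d\in(c,d^*)$ requires ruling out exceptional convergence to $\hat\xi_1$ along its codimension-$1$ stable manifold as well as any small periodic orbits inside the order interval $[\hat 0,\hat\xi_1]$, which is where careful use of positive-feedback monotone semiflow theory is unavoidable.
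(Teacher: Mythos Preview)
Your strategy is essentially the paper's: exploit the reset segment $R(s)=e^{-c(1+s)}$, use monotonicity in $d$ to define a threshold $d^*$, and for (B)(iii) identify $x^+_{t_2+1}$ with a segment of $p$ via uniqueness. Parts (A) and (B)(iii) are handled the same way in both.

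There is, however, a genuine difference in how the ``main technical obstacle'' you flag for (B)(i) is resolved. You propose to define $d^*=\sup\{d:z(\cdot;d)<1\}$ and then, for $d<d^*$, to force $z\to 0$ by invoking Mallet--Paret--Sell and excluding $\hat\xi_1$. Two issues: first, Mallet--Paret--Sell (Poincar\'e--Bendixson for cyclic feedback) is not the right tool for \emph{positive} feedback; the relevant convergence theory is the strongly-order-preserving semiflow machinery of Hirsch and Smith--Thieme (cf.\ Proposition~\ref{prop:conv}). Second, and more seriously, ruling out convergence of $z(\cdot;d)$ to $\hat\xi_1$ along its codimension-one stable set for \emph{every} $d$ in an interval is exactly what is hard, and your sketch gives no mechanism for it. The paper sidesteps this entirely by defining $\mathcal{D}=\{d>c:z^d<1\text{ on }(0,\infty)\text{ and }z^d\to 0\}$, so that convergence is built into the definition; the work then shifts to proving $\mathcal{D}$ is a nonempty bounded interval (monotone comparison in $d$, Propositions~\ref{prop:ordering}--\ref{prop:D-interval}) and that $d^*=\sup\mathcal{D}\notin\mathcal{D}$ (a perturbation-in-$d$ argument, Step~4 of Proposition~\ref{prop:D-interval}). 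This does not require excluding $\hat\xi_1$ directly.

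For (B)(ii) there is a second gap. Your claim that for $d>d^*$ the solution ``attains $1$ at some first time'' does not follow immediately from your definition of $d^*$: the negation of $d\in\mathcal{D}$ allows the possibility $z^d<1$ everywhere but $z^d\not\to 0$. The paper needs a separate comparison argument (Proposition~\ref{prop:first-hit-v0}) to exclude this. Likewise, ``recurring $1$-crossings with bounded gaps'' alone does not produce a uniform lower bound $m_0>0$; the paper obtains $m_0$, $m_1$, and $\sigma$ via an envelope comparison with two auxiliary solutions $w^0,w^1$ (Proposition~\ref{prop:envelope} and Corollary~\ref{cor:band}), trapping each below-$1$ excursion between $w^0$ and $1$ and each above-$1$ excursion between $1$ and $w^1$. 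This envelope step is the missing ingredient in your (B)(ii) sketch.
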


Consequently, if $d\in (c,d^*)$, then 
\begin{equation}\label{struct-gW0}
\overline{W^u(\hat\xi_{1})}
=\{\hat{0}\}\cup \{x_t^{-}:t\in\R\}\cup\{\hat\xi_{1}\}\cup \{x^{+}_t:t\in\R\},  
\end{equation}
and  if  $d>d^*$, then
\begin{equation}\label{struct-gW1}
\overline{W^u(\hat\xi_{1})}
=\{\hat{0}\}\cup \{x_t^{-}:t\in\R\}\cup\{\hat\xi_{1}\}\cup \{x^{+}_t:t\in\R\} 
\cup \mathcal{A}.
\end{equation}

The second main result of the paper shows that 
$d^*(c)$ of Theorem \ref{thm:Eg} is a threshold value 
for equation \eqref{eqn:Efn} as well. 
In the sequel, on stability of a periodic orbit we mean that it is hyperbolic, orbitally stable, and exponentially attractive with asymptotic phase. 

\begin{theorem}\label{thm:Efn}
Suppose that conditions $(C_g)(a)$ and $(C_{f_n})(a)(b)$ are satisfied. 
\begin{itemize}
\item[(A)] 
If $d>c$ and $n$ is sufficiently large, then  $W^{u,-}(\hat\xi_{1,n})$ connects $\hat\xi_{1,n}$ to $\hat 0$.
\item[(B)] For $d^*=d^*(c)>c$ given in Theorem \ref{thm:Eg} the following properties hold.
\begin{itemize}
\item[(i)] 
If $d\in (c,d^*)$ and $n$ is sufficiently large, then $W^{u,+}(\hat\xi_{1,n})$ connects $\hat\xi_{1,n}$ to $\hat 0$.
\item[(ii)] 
If $d>d^*$, $n$ is sufficiently large and $\delta>0$ is sufficiently small,  then there exist  $\tilde{m}_0,\tilde{m}_1$ with $0<\tilde{m}_0<1<\tilde{m}_1\le 2d/c$, and a $\tilde{\sigma}>0$  
such that 
$W^{u,+}(\hat\xi_{1,n})\subset C_{\tilde{m}_0,\tilde{m}_1}^{8d}$, and,   
for all $T\ge t_2$, the intervals $[T,T+\tilde{\sigma}]$ contain at least one $t$ with $y^{+,n}(t)\in[1-\delta,1+\delta]$.   
The $\omega$-limit set $\mathcal{A}_n=\omega_{\Phi^n}(y^{+,n}_{t_2+1})$ 
is a nonempty, compact invariant subset of $C_{\tilde{m}_0,\tilde{m}_1}^{8d}$ with $y^{+,n}_{t}\to \mathcal{A}_n$ as $t\to\infty$, 
$\hat{0}\notin \mathcal{A}_n$, $\hat{\xi}_{1,n}\notin\mathcal{A}_n$. 
\item[(iii)] 
Suppose $d>d^*$ and conditions $(C_g)(a)(b)$, $(C_{f_n})(a)(b)$. 
Then, for all sufficiently large $n$,   
equation \eqref{eqn:Efn}  has a stable  periodic orbit 
$\mathcal{O}^n$ such that $\mathcal{A}_n=\omega(y^{+,n}_{t_2+1})=\mathcal{O}^n$.  
 \end{itemize} 
\end{itemize}
\end{theorem}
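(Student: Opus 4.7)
The plan is to derive Theorem \ref{thm:Efn} from Theorem \ref{thm:Eg} by a transfer argument built on two ingredients expected to be established (or readily obtainable) from Sections \ref{sec2} and \ref{sec4}. First, convergence of the leading unstable manifolds: $y^{\pm,n}_0\to x^\pm_0$ in $C$ as $n\to\infty$, which follows from continuous dependence of the principal eigenvalue and eigenfunction of the linearization of $\Phi^n$ at $\hat\xi_{1,n}$ on the parameters $(a_n,b_n,f_n'(\xi_{1,n}))$, together with the normalization $y^{\pm,n}(0)=\kappa^\pm+\xi_{1,n}$. Second, a finite-time closeness lemma: for every $T>0$, whenever $\psi_n\to\phi$ in $C^+$ and the $\Gamma$-solution $x^\phi$ crosses the level $\{\xi=1\}$ only transversally on $[-1,T]$, one has $\sup_{t\in[0,T]}\|y^{\psi_n,n}_t-x^\phi_t\|_C\to 0$ (the discontinuity of $\Gamma$ is harmless along orbits with transversal crossings). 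Combined, these yield $\sup_{t\in[0,T]}\|y^{\pm,n}_t-x^\pm_t\|_C\to 0$ for every fixed $T>0$.

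Parts (A) and (B)(i) then reduce to a basin-of-attraction argument. Since $f_n'(0)\to g'(0)=0$ by $(C_{f_n})(b)$ and $a_n\to c>0$, the linearization of $\Phi^n$ at $\hat 0$ is $y'=-a_n y+b_n f_n'(0)\,y(t-1)$, uniformly exponentially stable for large $n$. Hence there exist $\rho>0$ and $n_0\in\N$ such that, for $n\ge n_0$, the ball $B_\rho(\hat 0)\subset C$ lies in the basin of $\hat 0$ under $\Phi^n$. In case (A) choose $T$ with $\|x^-_T\|_C<\rho/2$, which is possible by Theorem \ref{thm:Eg}(A); finite-time closeness then places $y^{-,n}_T\in B_\rho(\hat 0)$ for all large $n$, so $y^{-,n}_t\to\hat 0$. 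Case (B)(i) is identical with $x^+$ in place of $x^-$.

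The main obstacle is part (B)(ii). Finite-time closeness alone does not forbid $y^{+,n}_t$ from drifting to $\hat 0$ or escaping $C^{8d}_{\tilde m_0,\tilde m_1}$ as $t\to\infty$; the recurrence must be propagated uniformly in $t$. The Lipschitz bound $8d$ is immediate from the differential equation on any sublevel $\{y:\|y\|_C\le 2d/c\}$ once $a_n\le 2c$ and $b_n\le 2d$. Set $\tilde m_1=(m_1+2d/c)/2$, $\tilde m_0=m_0/2$, and fix a compact neighborhood $U$ of the $\Gamma$-attractor $\mathcal A\subset C_{m_0,m_1}^{2d}$ inside $C_{\tilde m_0,\tilde m_1}^{8d}$ with $\hat 0,\hat\xi_1\notin\overline U$. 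Using the uniform bound $\sigma$ supplied by Theorem \ref{thm:Eg}(B)(ii) and finite-time closeness applied on a window of length $\tilde\sigma:=\sigma+2$, I would prove a \emph{single-step recurrence}: there exists $n_1$ such that for every $n\ge n_1$ and every $\psi\in U$, the $\Phi^n$-orbit of $\psi$ stays in $C_{\tilde m_0,\tilde m_1}^{8d}$ on $[0,\tilde\sigma]$, hits $\{y^{+,n}(t)\in[1-\delta,1+\delta]\}$ at some $t\in[0,\tilde\sigma]$, and returns with $y^{+,n}_{\tilde\sigma}\in U$. Choosing $T_0\ge t_2+1$ with $x^+_{T_0}$ sufficiently close to $\mathcal A$, finite-time closeness gives $y^{+,n}_{T_0}\in U$ for large $n$, and iteration propagates this for all $t\ge T_0$. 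Compactness of $C_{\tilde m_0,\tilde m_1}^{8d}$ then produces the nonempty compact invariant $\omega$-limit set $\mathcal A_n=\omega_{\Phi^n}(y^{+,n}_{t_2+1})$, and the recurrence excludes $\hat 0$ and $\hat\xi_{1,n}$.

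For part (B)(iii), under $(C_g)(b)$ and $(C_{f_n})(a)(b)$ the results of \cite{BKV,BKSZ} provide, for all large $n$, a stable periodic orbit $\mathcal O^n$ of \eqref{eqn:Efn} whose segments converge in Hausdorff distance to those of $p$; in particular $\mathcal O^n\to\mathcal O=\{p_t:t\in[0,\omega_p]\}$. By Theorem \ref{thm:Eg}(B)(iii), $x^+_{t_2+1}\in\mathcal O$, so finite-time closeness places $y^{+,n}_{t_2+1}$ in an arbitrarily small neighborhood of $\mathcal O^n$ for $n$ large. Exponential attractivity of $\mathcal O^n$ with asymptotic phase then forces $y^{+,n}_t\to\mathcal O^n$, which together with (B)(ii) identifies $\mathcal A_n=\mathcal O^n$ and completes the proof.
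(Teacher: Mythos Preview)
Your arguments for (A), (B)(i), and (B)(iii) are essentially sound, though they differ in detail from the paper. For (A) the paper uses no closeness argument at all: Proposition~\ref{prop:W-Efn}(ii) shows directly from monotone-feedback theory that $y^{-,n}$ is strictly decreasing with range $(0,\xi_{1,n})$, hence tends to $0$. For (B)(i) and (B)(iii) the paper also lands $y^{+,n}$ in a basin via finite-time closeness, but with different basin criteria: instead of a small ball around $\hat 0$ (resp.\ $\mathcal O^n$), it uses the order relation $y^{+,n}_T\ll\hat\xi_{1,n}$ (resp.\ $y^{+,n}_{t_3+1}\ge \widehat{1+\varepsilon}$), which forces convergence by Proposition~\ref{cor-conv}(iii) (resp.\ by the explicit basin in Proposition~\ref{prop:main0-application}). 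Your versions are acceptable alternatives.

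The gap is in (B)(ii). Your single-step recurrence cannot be justified by the mechanism you propose. The set $\mathcal A=\omega_\Gamma(x^+_{t_2+1})$ is only the $\omega$-limit set of a single orbit under the \emph{discontinuous} semiflow $\Gamma$; nothing in Theorem~\ref{thm:Eg} says $\mathcal A$ attracts a neighborhood, and without $(C_g)(b)$ there is no reason it should. Hence there is no $U'$ with $\overline{U'}\subset U$ and $\Gamma(\tilde\sigma,\psi)\in U'$ for all $\psi\in U$, which is what transferring the return property to $\Phi^n$ via finite-time closeness would require. Even granting such a $U$, your closeness lemma needs transversal level-$1$ crossings of $x^\psi$ for \emph{every} $\psi\in U$, and that is not guaranteed either.

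The paper bypasses this entirely. The uniform-in-$t$ recurrence of $y^{+,n}$ comes from an \emph{envelope comparison} (Proposition~\ref{prop:interval-exit}) that never refers to $x^+$ or $\mathcal A$ after time $t_2$. On any maximal interval $I$ on which $y^{+,n}<1-\delta$, the nonlinearity $f_n$ is $\delta$-close to $g$, so $y^{+,n}$ can be compared pointwise with the auxiliary solution $w^0$ of \eqref{eqn:Eg} taken with a smaller parameter $d_0\in(d^*,d)$ and initial segment $e^{-ct}$; since $d_0>d^*$, Proposition~\ref{prop:first-hit-v0} forces $w^0$ back to $1$ by time $\tau_0$, bounding the length of $I$ and yielding the uniform lower bound $y^{+,n}\ge m_0/2$ on $I$. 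A dual (easier) estimate handles intervals where $y^{+,n}>1+\delta$. This envelope lemma is the missing ingredient that converts the one-shot closeness $|y^{+,n}(t_2)-1|<\delta$ into control for all $t\ge t_2$.
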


Theorem \ref{thm:Efn} gives a clear picture of the 
 closure of the unstable set $W^u(\hat{\xi}_{1,n})$ for all large $n$, see 
 Figures \ref{fig:1a}, \ref{fig:1b}, \ref{fig:1c}. 
In case $c<d<d^*$, $\overline{W^u(\hat\xi_{1,n})}$ is the union of two  
stationary points, $\hat{0}$ and  $\hat\xi_{1,n}$,  and two connecting orbits:  
\begin{equation}\label{struct-fnW0}
\overline{W^u(\hat\xi_{1,n})}
=\{\hat{0}\}\cup \{y_t^{-,n}:t\in\R\}\cup\{\hat\xi_{1,n}\}\cup \{y^{+,n}_t:t\in\R\},  
\end{equation}
and, in case $d>d^*$, $\overline{W^u(\hat\xi_{1,n})}$ is the union of three  disjoint compact invariant sets, $\hat{0}$, $\hat\xi_{1,n}$ and $\mathcal{A}_n$, and two connecting orbits: 
\begin{equation}\label{struct-fnW1}
\overline{W^u(\hat\xi_{1,n})}
=\{\hat{0}\}\cup \{y_t^{-,n}:t\in\R\}\cup\{\hat\xi_{1,n}\}\cup \{y^{+,n}_t:t\in\R\} 
\cup \mathcal{A}_n.
\end{equation}
 
By Theorems \ref{thm:Eg} and \ref{thm:Efn}, the case $d>d^*$ is analogous to a bistable situation: 
the 1-dimensional unstable sets   
 $W^u(\hat\xi_1)$ and $W^{u}(\hat\xi_{1,n})$ consist of connections from  
the stationary points $\hat\xi_1$ and  $\hat\xi_{1,n}$ to the stable 
stationary point $\hat{0}$, and to some compact subsets in $C^+$, respectively. 
 The stationary points $ \hat{\xi}_1$ and $\hat\xi_{1,n}$ 
belong to the intersection of the boundaries of the regions of attractions of $\hat{0}$, $\mathcal{O}$ and $\hat{0}$, $\mathcal{O}^n$, respectively. 
This result explains the long-term transient behavior observed numerically 
for several equations, in particular, for our prototype equation 
\eqref{eqn:proto}. 
It is expected that not only the stationary point $\hat\xi_{1,n}$ 
is on the intersection of the boundaries of the regions of attractions of 
$\hat{0}$, $\mathcal{O}^n$.
We prove that there are periodic orbits $\mathcal{Q}^n$ close to $\hat\xi_{1,n}$ and  
the leading 2-dimensional unstable set of the periodic orbit $\mathcal{Q}^n$ consists of 
connecting orbits from $\mathcal{Q}^n$ to $\hat{0}$ and to  $\mathcal{O}^n$. 

Assuming $(C_g)(a)(c)$ and $(C_{f_n})(b)(c)$, we find sequences 
$(a_n)_{n=1}^\infty$, $(b_n)_{n=1}^\infty$ satisfying condition 
$(C_{f_n})(a)$ so that  
a local Hopf bifurcation theorem at the stationary point 
$\hat\xi_{1,n}$ 
 guarantees that a non-trivial periodic solution $q^n:\R\to\R $ 
exists with minimal period $\omega^n>0$. Let $\mathcal{Q}^n=\{q^n_t:t\in[0,\omega^n]\}$.
Analogously to the construction of the leading 1-dimensional local unstable manifold at the stationary point $\hat\xi_{1,n}$ of $\Phi^n$, 
a 1-dimensional leading local unstable manifold $W^u_{\text{loc}}(q_0^n,\Phi^n(\omega^n,\cdot))$
can be constructed for the period map $\Phi^n(\omega^n,\cdot)$ at its 
fixed point $q_0^n\in \mathcal{Q}^n$. 
Due to the strict monotonicity of $f_n$ near $\xi_{1,n}$, 
 $W^u_{\text{loc}}(q_0^n,\Phi^n(\omega^n,\cdot))$ can be decomposed 
 into three 
disjoint subsets
$$
W^u_{\text{loc}}(q_0^n,\Phi^n(\omega^n,\cdot)) = 
W^{u,-}_{\text{loc}}(q_0^n,\Phi^n(\omega^n,\cdot)) \cup 
\{q_0^n\}
\cup 
W^{u,+}_{\text{loc}}(q_0^n,\Phi^n(\omega^n,\cdot))
$$
so that $\psi\ll q_0^n$ for $\psi\in W^{u,-}_{\text{loc}}(q_0^n,\Phi^n(\omega^n,\cdot))$, and $q_0^n \ll \psi$ for $\psi\in W^{u,+}_{\text{loc}}(q_0^n,\Phi^n(\omega^n,\cdot))$, See Section \ref{sec3}. 
The 2-dimensional leading unstable set $ W^u\left(\mathcal{Q}^n\right) $
of the periodic orbit $\mathcal{Q}^n$ is defined as the forward extension of 
the leading 1-dimensional local unstable manifold 
$W^u_{\text{loc}}(q_0^n,\Phi^n(\omega^n,\cdot))$ under the semiflow $\Phi^n$, 
that is, 
$$
W^u\left(\mathcal{Q}^n\right)=\Phi^n\left(
[0,\infty)\times  W^u_{\text{loc}}(q_0^n,\Phi^n(\omega^n,\cdot)) \right).  
$$ 
In addition, the decomposition of $W^u_{\text{loc}}(q_0^n,\Phi^n(\omega^n,\cdot))$
induces 
$$
W^u\left(\mathcal{Q}^n\right)=
W^{u,-}\left(\mathcal{Q}^n\right)\cup \mathcal{Q}^n \cup
W^{u,+}\left(\mathcal{Q}^n\right)
$$ 
where 
$$
W^{u,\pm}\left(\mathcal{Q}^n\right)=\Phi^n\left(
[0,\infty)\times  W^{u,\pm}_{\text{loc}}(q_0^n,\Phi^n(\omega^n,\cdot))\right).
$$
See Section \ref{sec3} for the details.

The existence of $\mathcal{Q}^n$ cannot be guaranteed for all 
sequences $(a_n)_{n=1}^\infty$, $(b_n)_{n=1}^\infty$ satisfying condition 
$(C_{f_n})(a)$. Instead, we construct suitable sequences  satisfying condition 
$(C_{f_n})(a)$. 
 
The third main result of this paper is as follows. See also Figure \ref{fig:1d}.

\begin{theorem}\label{thm:Hopf}
	Suppose conditions $(C_g)(a)(c)$ and $(C_{f_n})(b)(c)$.  
	Then there exist sequences 
	$(a_n)_{n=1}^\infty$ and  $(b_n)_{n=1}^\infty$
	of positive reals  satisfying $(C_{f_n})(a)$,
	 such that, for all sufficiently large $n$, 
	 equation \eqref{eqn:Efn} possesses a nontrivial periodic solution 
		$q^n: \mathbb{R} \to \mathbb{R}$ with minimal period $\omega^n>0$.  
The 2-dimensional leading unstable set $W^u\left(\mathcal{Q}^n\right)$	of 
the periodic orbit $\mathcal{Q}^n=\{q_t^n:t\in [0,\omega^n]\}$ 
can be decomposed as 
$$
W^u\left(\mathcal{Q}^n\right)=
W^{u,-}\left(\mathcal{Q}^n\right)\cup \mathcal{Q}^n \cup
W^{u,+}\left(\mathcal{Q}^n\right)
$$ 
so that the following statements hold. 
\begin{itemize}
\item[(i)]
$W^{u,-}\left(\mathcal{Q}^n\right)$ connects $\mathcal{Q}^n$ to $\hat{\xi}_{1,n}$.
\item[(ii)] 
If $d\in(c,d^*)$, $n$ is sufficiently large, then 
$W^{u,+}\left(\mathcal{Q}^n\right)$ consists of 
connecting orbits from $\mathcal{Q}^n$ to $\hat{0}$. 
\item[(iii)]
If $d>d^*$, $n$ is sufficiently large and $\delta>0$ is sufficiently small,  then with the constants  $\tilde{m}_0,\tilde{m}_1$ and  $\tilde{\sigma}>0$  
given in Theorem \ref{thm:Efn}(B)(ii), we have 
$W^{u,+}(\mathcal{Q}^n)\subset C_{\tilde{m}_0,\tilde{m}_1}^{8d}$, and,   
for all $\phi\in W^{u,+}(\mathcal{Q}^n)$ there exists a $t_\phi>0$ so that for all $T\ge t_\phi$ there is a $t\in [T,T+\tilde{\sigma}]$ with 
$y^\phi(t)\in [1-\delta,1+\delta]$ for the solution 
$y^\phi:[-1,\infty)\to \mathbb{R}$ of \eqref{eqn:Efn}.  
Moreover, for any $\phi\in W^{u,+}(\mathcal{Q}^n)$
the $\omega$-limit set 
$\omega_{\Phi^n}(\phi) $ is
a nonempty, compact invariant subset of $C_{\tilde{m}_0,\tilde{m}_1}^{8d}$ with $\Phi^n(t,\phi)\to \omega_{\Phi^n}(\phi)$ as $t\to\infty$, 
$\hat{0}\notin \omega_{\Phi^n}(\phi)$, $\hat{\xi}_{1,n}\notin \omega_{\Phi^n}(\phi)$, 
and $\mathcal{Q}^n\cap \omega_{\Phi^n}(\phi)=\emptyset$.
\item[(iv)]
If $d>d^*$ and   condition $(C_g)(b)$  holds as well, 
and $n$ is sufficiently large, then 
equation \eqref{eqn:Efn}  has a stable  periodic orbit 
$\mathcal{O}^n$ such that $\omega_{\Phi^n}(\phi)=\mathcal{O}^n$ for all 
$\phi\in W^{u,+}(\mathcal{Q}^n)$. 
Therefore,  
$$
\overline{W^{u}\left(\mathcal{Q}^n\right)}
=
\{\hat{0}\}\cup W^{u,-}\left(\mathcal{Q}^n\right) 
\cup \mathcal{Q}^n \cup W^{u,+}\left(\mathcal{Q}^n\right) 
\cup \mathcal{O}^n,
$$
where
$W^{u,-}\left(\mathcal{Q}^n\right)$ consists of 
connecting orbits from $\mathcal{Q}^n$ to $\hat{0}$, and 
$W^{u,+}\left(\mathcal{Q}^n\right)$ consists of 
connecting orbits from $\mathcal{Q}^n$ to $\mathcal{O}^n$. 
\end{itemize}
\end{theorem}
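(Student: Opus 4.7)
The plan is: (I) realize the Hopf bifurcation to obtain $\mathcal{Q}^n$ and build the $2$-dimensional leading unstable set; (II) establish the connection $\mathcal{Q}^n\to \hat{\xi}_{1,n}$ in (i) via local monotonicity and a Poincar\'e--Bendixson theorem for monotone cyclic feedback DDEs; and (III) reduce (ii)--(iv) to Theorem~\ref{thm:Efn} by monotone comparison with orbits on $W^{u,+}(\hat{\xi}_{1,n})$.

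\emph{Hopf and the decomposition.} Linearisation of \eqref{eqn:Eg} at $\hat{\xi}_1$ gives the characteristic equation $\lambda+c-dg'(\xi_1)e^{-\lambda}=0$; condition $(C_g)(c)$ is exactly the statement that $\pm i\Theta_j$ are roots of it. By $(C_{f_n})(b)(c)$ and Proposition~\ref{prop:zeros-fn}, $f_n'(\xi_{1,n})\to g'(\xi_1)$, so the characteristic equation at $\hat{\xi}_{1,n}$ for \eqref{eqn:Efn} depends smoothly on $(a,b)$ near $(c,d)$ and the pair of imaginary roots crosses the imaginary axis transversally along a smooth curve of parameters. Choosing $(a_n,b_n)$ on this curve with $(a_n,b_n)\to(c,d)$ ensures $(C_{f_n})(a)$, and a standard local Hopf theorem for retarded FDEs produces a nontrivial periodic solution $q^n$ of \eqref{eqn:Efn} with minimal period $\omega^n\to 2\pi/\Theta_j$ and orbit $\mathcal{Q}^n$ shrinking onto $\hat{\xi}_{1,n}$ as $n\to\infty$. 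The $1$-dimensional leading unstable manifold of $\Phi^n(\omega^n,\cdot)$ at $q_0^n$ then follows from the $C^1$ unstable manifold theorem, its leading Floquet multiplier being a simple real number exceeding $1$ obtained as a perturbation of the leading real positive eigenvalue of $\hat{\xi}_{1,n}$. Strict positivity of $f_n'$ on a neighbourhood of $\xi_{1,n}$ (from $(C_{f_n})(b)$ and $f_n'(\xi_{1,n})\to g'(\xi_1)>0$) makes the variational equation along $\mathcal{Q}^n$ a positive linear monotone DDE; by a Krein--Rutman argument the leading Floquet eigenvector is strictly positive, which yields the $\ll/\gg$ decomposition of $W^u_{\text{loc}}(q_0^n,\Phi^n(\omega^n,\cdot))$ into $W^{u,\pm}_{\text{loc}}$ and, by forward extension under $\Phi^n$, of $W^u(\mathcal{Q}^n)$ into the three asserted pieces.

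\emph{Parts (i)--(iv).} For (i), take $\psi\in W^{u,-}_{\text{loc}}$ sufficiently close to $q_0^n$. Monotonicity of the linearised semiflow along $\mathcal{Q}^n$ combined with a standard comparison argument gives $\Phi^n(t,\psi)\ll\Phi^n(t,q_0^n)$ for all $t\ge 0$, while a dual lower comparison keeps the trajectory in a small compact slab around $\hat{\xi}_{1,n}$ contained in the region where $f_n'>0$. On this slab the dynamics are monotone cyclic in Mallet-Paret's sense, so a Poincar\'e--Bendixson theorem for monotone cyclic feedback DDEs forces $\omega_{\Phi^n}(\psi)$ to be a single stationary point or a single periodic orbit. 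Local uniqueness of the Hopf branch near $(a_n,b_n)$ excludes any periodic orbit other than $\mathcal{Q}^n$; the defining property $\psi\in W^u_{\text{loc}}$ excludes accumulation on $\mathcal{Q}^n$; and $\hat{0}$ is ruled out by the slab's lower bound. Hence $\omega_{\Phi^n}(\psi)=\{\hat{\xi}_{1,n}\}$, and forward invariance of $\omega$-limits extends this conclusion to all of $W^{u,-}(\mathcal{Q}^n)$. For (ii)--(iv), the analogous comparison on the other side yields $\Phi^n(t,\psi)\gg\Phi^n(t,q_0^n)$ for $\psi\in W^{u,+}_{\text{loc}}$; once the orbit leaves the neighbourhood of $\mathcal{Q}^n$ it is sandwiched between two orbits on $W^{u,+}(\hat{\xi}_{1,n})$, so applying Theorem~\ref{thm:Efn}(B) to the sandwiching orbits transfers each of its conclusions to the trajectory in $W^{u,+}(\mathcal{Q}^n)$: convergence to $\hat{0}$ in case $c<d<d^*$, the trapping in $C^{8d}_{\tilde{m}_0,\tilde{m}_1}$ together with the asserted $\omega$-limit description in case $d>d^*$, and the identification with the stable $\mathcal{O}^n$ under the additional hypothesis $(C_g)(b)$.

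\emph{Main obstacle.} The delicate step is part~(i). The Poincar\'e--Bendixson argument has to be run on a slab where the feedback is monotone only locally, so the slab has to be constructed so that forward trajectories do not escape the region where $f_n'>0$; and the exclusion of any secondary periodic orbit near $\hat{\xi}_{1,n}$ beyond $\mathcal{Q}^n$ requires combining local uniqueness inherent in the Hopf bifurcation with a careful bound on the spectrum of the monodromy operator. Everything else in the proof is either classical invariant manifold theory or a direct pull-back of Theorem~\ref{thm:Efn}, but isolating $\hat{\xi}_{1,n}$ as the unique admissible $\omega$-limit for $W^{u,-}(\mathcal{Q}^n)$ is where the work is concentrated.
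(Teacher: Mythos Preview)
Your approach diverges from the paper's in two places, and the second contains a genuine gap.

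\textbf{Part (i).} You take the target to be $\hat{\xi}_{1,n}$ and propose a Poincar\'e--Bendixson argument together with local Hopf uniqueness to exclude other $\omega$-limits. The paper does something much simpler and never invokes Poincar\'e--Bendixson. It passes to the globally increasing extension $h_n$ of $f_n$ (equation~\eqref{def-hn}), so that the shifted semiflow $\Phi_n$ is order-preserving on all of $C_{\xi_-,\xi_+}$ with $\xi_-=-\xi_{1,n}$. Proposition~\ref{W(O)}(i)---which rests only on Proposition~\ref{prop:conv} and hence on Smith--Thieme convergence for strongly order-preserving semiflows---then gives $u^{-,\psi}_t\to\hat{\xi}_-$ directly. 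No slab construction, no exclusion of secondary periodic orbits, no spectral bounds on the monodromy are needed. Note also that translating back ($\xi_-=-\xi_{1,n}$) yields convergence to $\hat 0$, which is consistent with the formula in (iv); the printed target $\hat{\xi}_{1,n}$ in (i) appears to be a typo. Either way, what you flag as the ``main obstacle'' dissolves once one works with the monotone extension.

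\textbf{Parts (ii)--(iv).} Here your sandwich argument does not go through. You want to trap a trajectory in $W^{u,+}(\mathcal{Q}^n)$ between two time-translates of the single orbit $y^{+,n}$ and then read off the conclusions of Theorem~\ref{thm:Efn}. But order-preservation of $\Phi^n$ holds only where $f_n$ is increasing; once any of the three trajectories exceeds the unimodal peak (near $1$), the comparison collapses, and this happens well before the trajectories reach the basin of $\hat 0$ or of $\mathcal{O}^n$. The paper never compares solutions of \eqref{eqn:Efn} to one another in the non-monotone region. Instead it invokes the $(Y_n)$ framework of Section~\ref{sec4} with $\varepsilon_n=1/n$ and $J_n=W^{u,+}_{\text{loc},\delta_n}(q_0^n,\Phi^n(\omega^n,\cdot))$, and proves (Propositions~\ref{prop:yn-x-on-10} and~\ref{prop:yn-x-on12}) that the whole family $\{y^{+,n,\phi}\}_{\phi\in J_n}$ is uniformly close, in $\phi$ and on compact time intervals, to the single limiting solution $x^+$ of \eqref{eqn:Eg}. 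This uniform closeness to $x^+$---not any order relation---is what carries the trajectories across the non-monotone zone: Corollary~\ref{cor:yntozero} then handles (ii), Proposition~\ref{prop:interval-exit} handles (iii), and Corollary~\ref{cor:yn>1+e} places every $y^{+,n,\phi}_{t_3+1}$ in the region of attraction of $\mathcal{O}^n$ for (iv). A pure monotone-comparison reduction to Theorem~\ref{thm:Efn} cannot replace this quantitative perturbation machinery.
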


Theorems \ref{thm:Eg} and \ref{thm:Hopf} describe only a part of the global attractors of the semiflows $\Phi^n$ showing different types of connecting 
orbits and periodic orbits. 
Complicated dynamics is not guaranteed in this paper. 
However, the results give information on parameter values for which chaotic dynamics might appear. 
One possibility is the critical case $d=d^*(c)$. 
We suspect that, for some paramaters,  complex dynamics is possible within the sets $\mathcal{A}$ and $\mathcal{A}_n$, given  
in Theorems \ref{thm:Eg} and \ref{thm:Efn}.  

 \begin{figure}[!htbp]
	\centering
	\begin{subfigure}[b]{0.44\textwidth}
		\centering
		\includegraphics[width=0.95\textwidth]{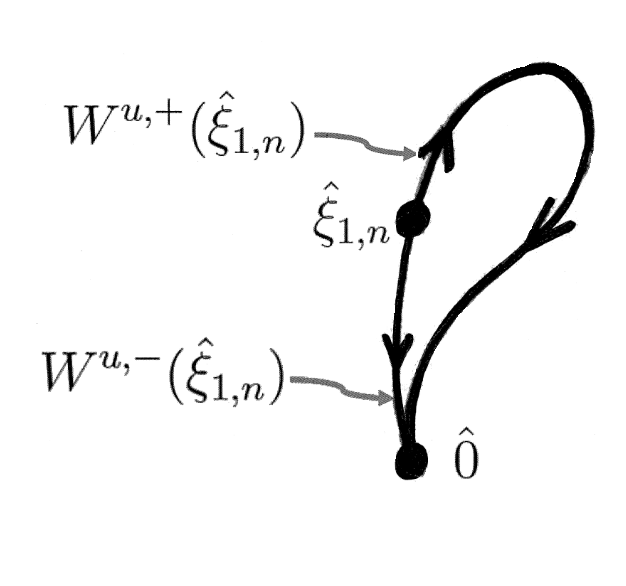}
		\caption{$d \in (c,d^*)$, $n$ large, $(C_g)(a)$ and $(C_{f_n})(a)(b)$ hold.}
		\label{fig:1a}
	\end{subfigure}
	\hfill
	\begin{subfigure}[b]{0.44\textwidth}
		\centering
		\includegraphics[width=0.95\textwidth]{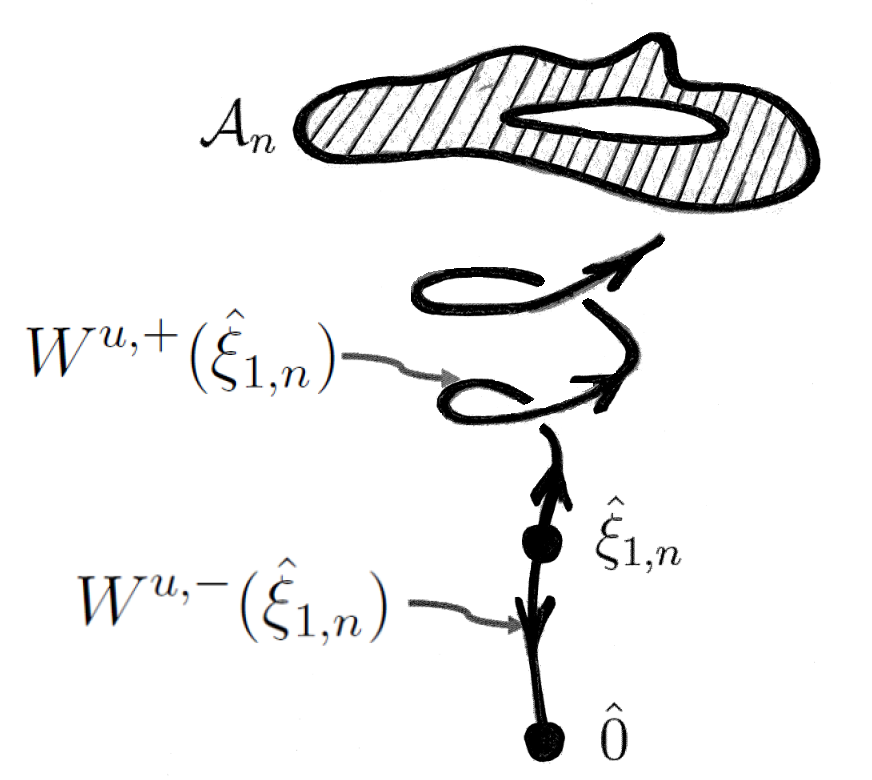}
		\caption{ $d > d^*$, $n$ large, $(C_g)(a)$ and $(C_{f_n})(a)(b)$ hold.
		}
		\label{fig:1b}
	\end{subfigure}
	\vspace{0.5cm}
	\begin{subfigure}[b]{0.44\textwidth}
		\centering
		\includegraphics[width=0.95\textwidth]{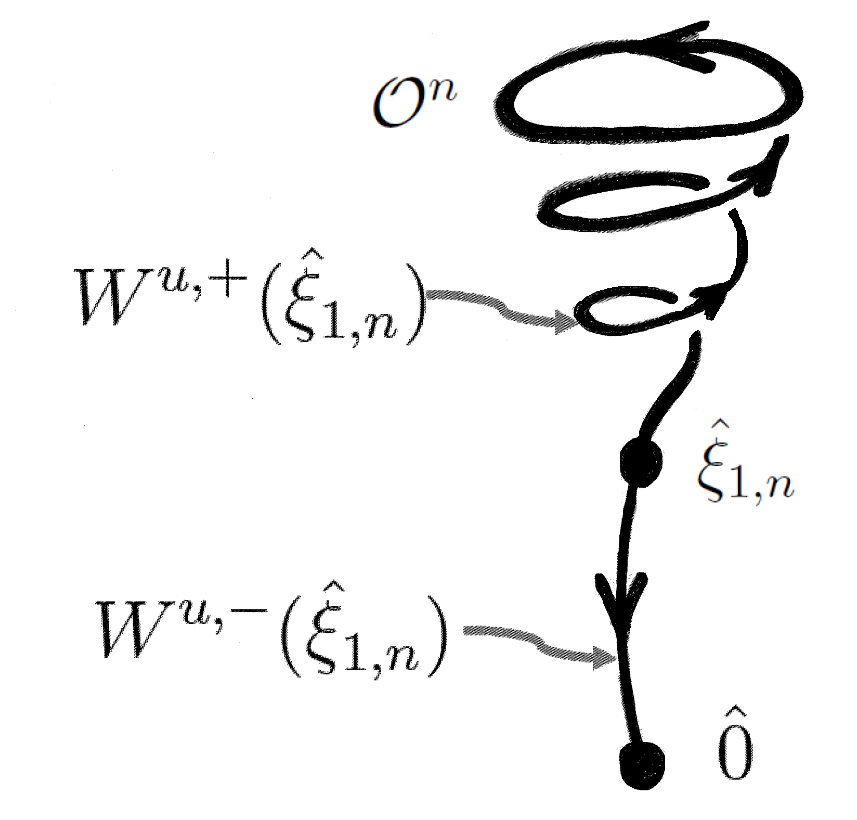}
		\caption{$d > d^*$, $n$ large, $(C_g)(a)(b)$ and $(C_{f_n})(a)(b)$ hold.
		}
		\label{fig:1c}
	\end{subfigure}
	\hfill
	\begin{subfigure}[b]{0.44\textwidth}
		\centering
		\includegraphics[width=0.95\textwidth]{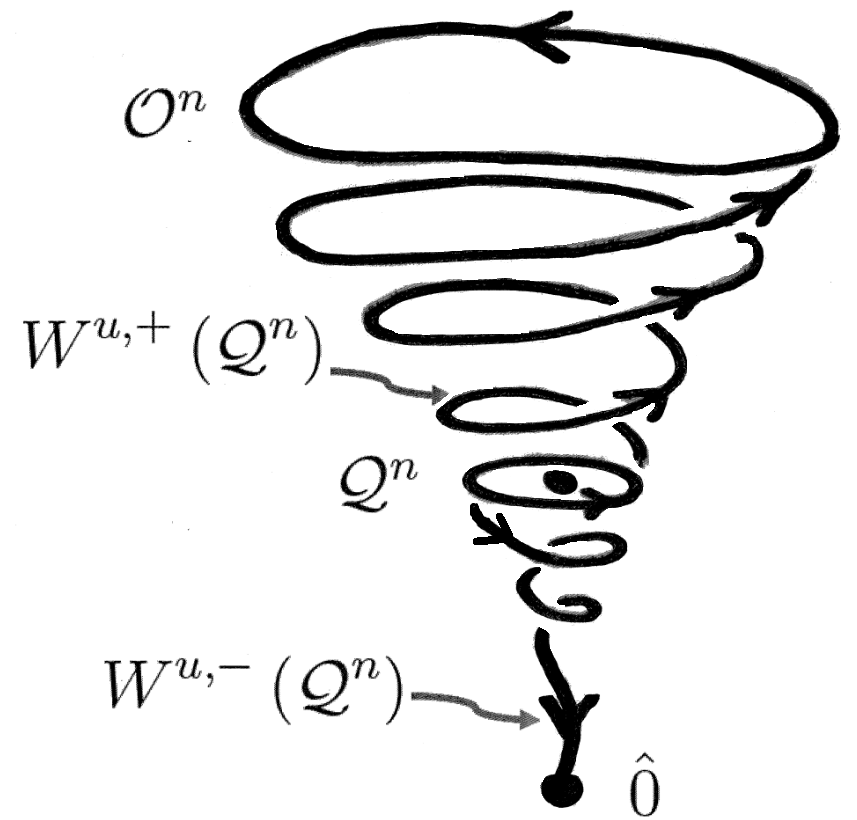}
		\caption{$d > d^*$, $n$ large, $(C_g)(a)(b)(c)$ and $(C_{f_n})(b)(c)$ hold.}
		\label{fig:1d}
	\end{subfigure}
	\caption{Visualization of the results of Theorems \ref{thm:Efn} and \ref{thm:Hopf}}
	\label{fig:1}
\end{figure}

In the proofs of the above theorems the starting point is the construction 
of leading local unstable manifolds for semiflows at stationary points and 
for period maps at periodic orbits. 
All of these local objects are in regions of the phase space where the
semiflows $\Gamma$ and $\Phi^n$  are monotone. 
Then the theory of monotone semiflows \cite{Hirsch}, \cite{Smith}, and in particular the 
technical tools developed in  \cite{KWW} can be applied. 
The difficulties arise when the local unstable manifolds are extended 
forward by the semiflows, and the orbits enter into regions where 
the feedback functions are not monotone. 
This  appears for the 'upper' parts of the unstable sets, denoted 
above by a plus sign in the upper index. 

In order to overcome the difficulties, a key result is a strong stability property of the set $W^{u,+}(\hat\xi_1)$. 
The solutions $x^+$ and $y^{+,n}$ appearing in  \eqref{Wu-repr} 
are unique as they defined before Theorem \ref{thm:Eg}. 
Then  $y^{+,n}(t)\to x^+(t)$ 
as $n\to\infty$ uniformly on compact subsets of $(-\infty,0]$, provided 
condition $(C_{f_n})(a)(b)$ holds, see Proposition  \ref{un-to-u+} and 
Section \ref{sec4}. 
This fact allows to apply the technique developed in \cite{BKSZ} to estimate the distance of 
solutions of \eqref{eqn:Eg} and \eqref{eqn:Efn} on fixed compact intervals, for large $n$.  

The location of the $\omega$-limit sets  $\mathcal{A}$ and $\mathcal{A}_n$ 
in Theorems \ref{thm:Eg}, \ref{thm:Efn} and \ref{thm:Hopf} uses the above  properties  
of $x^+$, and the following observation. For the above defined $x^+$,  $x^+(t_2)=1$ and $x^+(t)=e^{-c(t-t_2)}$ for $t\in[t_2,t_2+1]$. 
Therefore, we can consider solutions of equation \eqref{eqn:Eg} on $[0,\infty)$ with initial function $x(t)=e^{-ct}$, $t\in[0,1]$. 
Depending on the parameter $d$,  different asymptotic behaviors are obtained 
as given in  Theorems \ref{thm:Eg}, \ref{thm:Efn} and \ref{thm:Hopf}. Again, since $g$ is monotone in $[0,1]$, 
results for monotone semiflows can be applied as long as $x^+(t)$ remains 
in $[0,1]$. By condition $(C_{f_n})(a)(b)$ and the stability property of 
$x^+$, the asymptotic properties of $x^+$ 
are preserved for large $n$ for some solutions of \eqref{eqn:Efn}.

As  $t_2>t_1+1$ with $x^+(t)>1$ for all $t\in(t_1,t_2)$,  
 there are $t_3>t_1$ and $\varepsilon>0$ with 
$t_3+1<t_2$ and  $x^+(t)\ge 1+2\varepsilon$ on the interval $[t_3,t_3+1]$.  
Hence, in particular, 
we conclude $y^{+,n}(t)\ge 1+\varepsilon $ in the interval $[t_3,t_3+1]$ for large $n$. 
Under the conditions $(C_g(b)$ and $(C_{f_n})(a)(b)$,  the main result of \cite{BKSZ} guarantees the  existence of a stable periodic orbit $\mathcal{O}^n$. 
 See also 
Propositions \ref{thm:main0} and \ref{prop:main0-application}. 
It is an important information that the set $\{\psi\in C^+:\psi\ge 1+\varepsilon\}$ is in the region of attraction of $\mathcal{O}^n$, for large $n$. 
Therefore, for large $n$, $y^{+,n}_{t_3+1}$ is in the region of attraction of $\mathcal{O}^n$, and $\mathcal{A}_n =\mathcal{O}^n$ follows.  

A similar result is valid for the 'upper' parts of the unstable sets 
$W^u\left(\mathcal{Q}^n\right)$ of the periodic orbits $\mathcal{Q}^n$ 
since $\mathrm{dist}(\hat\xi_1,\mathcal{Q}^n)\to 0$ as $n\to\infty$. 
 
The paper is organized as follows. In Section~\ref{sec2}, we collect some background material and preliminary propositions on delay differential equations with monotone feedbacks. We also review the basic definitions of the function spaces and the semiflow properties that will be used throughout the paper.

In Section~\ref{sec3}, we establish several results for equations whose nonlinearities are strictly increasing on certain intervals. In particular, we construct the 1-dimensional leading local unstable manifolds at the relevant equilibria and prove key monotonicity and ordering properties of the semiflow.

Section~\ref{sec4} is devoted to showing the closeness of solutions for the ``limiting'' discontinuous equation and a sequence of smooth approximations. We apply these estimates to control the global behavior of the perturbed systems when their parameters approach those of the limiting case which makes possible to prove Theorems~\ref{thm:Efn} and \ref{thm:Hopf}.

Section~\ref{sec5} studies the solutions of equation \eqref{eqn:Eg} on $[0,\infty)$ with initial function $x(t)=e^{-ct}$, $t\in[0,1]$,  $c>0$ is fixed, 
and $d>c$ is considered a parameter.  
The threshold value $d^*(c)$ is constructed, and by applying the results 
of Section~\ref{sec4} under condition $(C_{f_n})(a)(b)$, it is shown that 
some asymptotic properties of $x^+$ 
are preserved for the solutions of \eqref{eqn:Efn}, provided  $n$ is large. 
All of these results combined give  the proofs of Theorems~\ref{thm:Eg} and \ref{thm:Efn}. 

Section~\ref{sec6} proves  Theorem \ref{thm:Hopf} in seven steps.  By condition $(C_{g})(a)(c)$ a local Hopf bifurcation 
 is applied in Step 1. Step 2 constructs the sequences 
	$(a_n)_{n=1}^\infty$ and  $(b_n)_{n=1}^\infty$, and the periodic 
	orbits $\mathcal{Q}^n$ for  
	equation \eqref{eqn:Efn}. Step 3 applies the results of Section~\ref{sec3} to get the 2-dimensional leading unstable set $W^u\left(\mathcal{Q}^n\right)$	of  $\mathcal{Q}^n$. 
After these preparations, Steps 4--7 complete the proof of Theorem \ref{thm:Hopf}. 

Finally, Section~\ref{sec7} contains  examples for our prototype 
equation \eqref{eqn:proto}. The solutions are portrayed in the $(t,y)$ plane  complementing the visualization  on Figure \ref{fig:1}, 
and illustrating that, behind the simple-looking phase portraits of Figure \ref{fig:1} in the phase space, 
complex solution structures may appear.

	\section{Preliminaries}\label{sec2}
	
	Throughout the paper, we let $\mathbb{R}$ denote the set of all real numbers, $\mathbb{C}$ the set of complex numbers, $\mathbb{N}=\{1,2,\dots\}$, and $\mathbb{N}_0=\{0\}\cup \mathbb{N}$. 
	
	Let $C = C([-1,0], \mathbb{R})$ be the Banach space of continuous real-valued functions on $[-1,0]$, equipped with the norm $\|\phi\|  =  \max_{s \in [-1,0]} |\phi(s)|$. 
Recall from Section \ref{sec1} the subset $C^+$ of $C$, and for given  
$\kappa_2 > \kappa_1>0$, $L>0$ the sets
$ C_{\kappa_1,\kappa_2}$ and  
$C_{\kappa_1,\kappa_2}^L$. 
	
	 We consider equations of the form 
\begin{equation}\label{eqn:general}
x'(t) = -\alpha x(t)+ G\left(t, x(t-1)\right)
\end{equation} 
	where $\alpha\ge 0$, and  $G:[t_0,\infty)\times \mathbb{R}\to \mathbb{R}$ with some $t_0\in  \mathbb{R}$.   
	A solution of \eqref{eqn:general} on the interval $[t_1-1,\infty)$ with $t_1\ge t_0$ is a continuous function $ x:[t_1-1,\infty)\to\R $  such that 
	$[t_1-1,\infty)\ni s \mapsto G\left(s, x(s-1)\right)\in\R$ is locally integrable, and 
	\begin{equation}\label{soln-def}
	x(t)=e^{-\alpha(t-\tau)}x(\tau)+\int_\tau^t e^{-\alpha(t-s)} 
	G(s,x(s-1)) \  ds 
	\quad \text{ for all }t_1\le\tau<t<\infty
	\end{equation}
	holds. 
In case $G$ is defined on $\R^2$, the continuous function $x:\R\to\R$ is a solution of \eqref{eqn:general} on $\R$ if it is a solution of \eqref{eqn:general} on $[t_1-1,\infty)$ for any $t_1\in\R$. 
Observe that \eqref{soln-def} is obtained from the variation-of-constants 
formula for ordinary differential equations applied to \eqref{eqn:general}. 
Moreover, if $G$ is continuous then a  solution $ x:[t_1-1,\infty)\to\R $ 
of \eqref{eqn:general} in the above sense is differentiable on $ (t_1,\infty) $, and \eqref{eqn:general} holds for all $t\in(t_1,\infty)$. 

In particular, if $T_1\in\mathbb{R}$ and $T_1<T_2\le\infty$, then the 
continuous function $y:[T_1-1,T_2)\to [0,\infty)$ is a solution of 
\eqref{eqn:Efn} on the interval $[T_1-1,T_2)$ if 
\begin{equation}\label{inteqn:y}
y(t)=e^{-a_n(t-\tau)}y(\tau)+b_n\int_\tau^t e^{-a_n(t-s)} 
	f_n(y(s-1))    ds 
	\quad \text{ holds for all }T_1\le\tau<t<T_2.
\end{equation}
 Clearly, 
\eqref{inteqn:y}  implies that $y$ is 
continuously differentiable on $(T_1,T_2)$, and equation 
\eqref{eqn:Efn} is satisfied on $(T_1,T_2)$
Assuming condition $(C_{f_n})(a)(b)$, by the method of steps 
for \eqref{inteqn:y}, or by  
classical results from \cite{{DVLGW},{Hale}},  it is easy to 
obtain that for any $\psi\in C^+$, there is a unique solution 
 $y^\psi:[-1,\infty)\to\R$ of equation \eqref{eqn:Efn} with 
 $y_0=\psi$, and 
 $$
\Phi^n:[0,\infty)\times C^+ \ni (t,\psi)\mapsto y_t^\psi\in C^+
$$
is a continuous semiflow.  	

If $(C_g)(a)$ holds, $T_1\in\mathbb{R}$, $T_1<T_2\le\infty$, and 
$x:[T_1-1,T_2)\to [0,\infty)$ is a continuous function, then 
$[T_1,T_2)\in s\mapsto g(x(s-1))\in [0,1]$ is measurable. 
Therefore, the continuous $x:[T_1-1,T_2)\to [0,\infty)$ 
is a solution of \eqref{eqn:Eg} on the interval $[T_1-1,T_2)$ if 
\begin{equation}\label{inteqn:x}
x(t)=e^{-c(t-\tau)}x(\tau)+d\int_\tau^t e^{-c(t-s)} 
	g(x(s-1))    ds \quad \text{ holds for all }T_1\le\tau<t<T_2.
\end{equation}
Again,  for given   $\phi\in C^+$, the method of steps allows to define a unique solution $x^\phi:[-1,\infty)\to [0,\infty)$ with  $x_0^\phi=\phi$. 
Moreover,  if $t>0$ and $x^\phi(t-1)\ne 1$, then $x^\phi$ is differentiable at $t$, and equation \eqref{eqn:Eg} holds at $t$. 
It is easy to see that the map 
$$\Gamma:[0,\infty)\times C^+\ni (t,\phi)\mapsto x_t^\phi \in C^+
$$ 
defines a semiflow. However, $\Gamma$ is not continuous. For example, 
$\Gamma(1,\hat{1})=e^{-c(1+s)}(1-d/c)+d/c$,  and 
for $\varepsilon>0$, $\Gamma(1,\widehat{1+\varepsilon})(s)=e^{-c(1+s)}(1+\varepsilon)$, $s\in[-1,0]$. Hence, 
$\Gamma(1,\widehat{1+\varepsilon})\not\to \Gamma(1,\hat{1}$ as $\varepsilon\to 0+$. 
On the other hand, $\Gamma$ restricted to $[0,\infty)\times \{\varphi\in C^+: \varphi(s)<1,\ s\in[-1,0]\}$ is continuous.  
 
For some  $\psi\in C^+$, there are unique solutions 
$x:\R\to\R$ and $y:\R\to\R$ of equations \eqref{eqn:Eg} 
and \eqref{eqn:Efn} with 
$x_0=\phi$ and $y_0=\psi$, respectively. 
These solutions will be  denoted by 
$x^\phi$ and $y^\psi$, as well. 
If we want to emphasize the dependence of the solution $y^\psi$ of \eqref{eqn:Efn} on $n$, then 
$y^{n,\psi}$ is used instead of $y^\psi$. 
 
The following boundedness result is valid.

\begin{proposition}\label{prop:bounds} Assume  $(C_g)(a)$ and 
$(C_{f_n})(a)(b)$. 
\begin{itemize}
\item[(i)] 
$$
\Gamma\left( [0,\infty)\times C_{0,d/c}\right) \subset  C_{0,d/c},\quad 
\Gamma\left([1,\infty)\times C_{0,d/c}\right) \subset  C_{0,d/c}^{2d}.
$$
\item[(ii)] For all sufficiently large $n$, 
$$
\Phi^n\left( [0,\infty)\times C_{0,2d/c}\right) \subset  C_{0,2d/c},\quad 
\Phi^n\left([1,\infty)\times C_{0,2d/c}\right) \subset  C_{0,2d/c}^{8d}.
$$
\end{itemize}
\end{proposition}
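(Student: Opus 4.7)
The plan is to work directly from the integral forms \eqref{inteqn:x} and \eqref{inteqn:y}, which hold without any differentiability assumptions and yield both pointwise bounds and Lipschitz estimates immediately.

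For part (i), I fix $\phi\in C_{0,d/c}$ and set $x=x^\phi$. Using $0\le g\le 1$ in \eqref{inteqn:x} I obtain, for $0\le\tau\le t$,
$$
0 \le x(t) \le e^{-c(t-\tau)}x(\tau) + \tfrac{d}{c}\bigl(1-e^{-c(t-\tau)}\bigr),
$$
which is a convex combination of $x(\tau)$ and $d/c$; iterating on the unit steps $[k-1,k]$, $k\in\N$, propagates $0\le x(t)\le d/c$ to every $t\ge -1$, so $C_{0,d/c}$ is positively invariant. For the Lipschitz claim, let $t\ge 1$ and $-1\le s_1<s_2\le 0$, and set $t_j=t+s_j\in[0,\infty)$. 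Taking the difference of \eqref{inteqn:x} written at $t_1$ and $t_2$ with common base point $\tau=t_1$,
$$
x(t_2)-x(t_1)=\bigl(e^{-c(t_2-t_1)}-1\bigr)x(t_1)+d\int_{t_1}^{t_2}e^{-c(t_2-s)}g(x(s-1))\,ds.
$$
From $|e^{-c\Delta}-1|\le c\Delta$, $x(t_1)\le d/c$, and $0\le g\le 1$, I obtain $|x(t_2)-x(t_1)|\le 2d(t_2-t_1)$, which is exactly the Lipschitz constant defining $C_{0,d/c}^{2d}$.

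For part (ii), the identical scheme applies with $(c,d,g)$ replaced by $(a_n,b_n,f_n)$, since $(C_{f_n})(b)$ gives $0\le f_n<1$. From $(C_{f_n})(a)$, $a_n\to c$ and $b_n\to d$, so for all sufficiently large $n$ one has $b_n/a_n\le 2d/c$, $a_n\le 2c$, and $b_n\le 2d$. The convex-combination step then yields positive invariance of $C_{0,2d/c}$ under $\Phi^n$, and the analogous difference estimate for $y=y^{\psi}$ gives, for $t\ge 1$ and $t_1<t_2$ in $[t-1,t]$,
$$
|y(t_2)-y(t_1)|\le a_n(t_2-t_1)\cdot \tfrac{2d}{c}+b_n(t_2-t_1)\le (4d+2d)(t_2-t_1)\le 8d\,(t_2-t_1),
$$
as claimed.

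The only conceptual subtlety is in (i): the semiflow $\Gamma$ is not continuous, and solutions of \eqref{eqn:Eg} fail to be classically differentiable at instants where $x(t-1)=1$. The argument above sidesteps this entirely by working with \eqref{inteqn:x}, in which $g$ enters only through a Lebesgue integral. The constants $2d/c$ and $8d$ in (ii) are comfortable rather than sharp, so I do not anticipate any real obstacle here beyond bookkeeping the eventual inequalities $a_n\le 2c$, $b_n\le 2d$, $b_n/a_n\le 2d/c$.
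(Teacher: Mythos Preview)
Your proposal is correct and follows essentially the same approach as the paper: both arguments work directly from the integral equations \eqref{inteqn:x} and \eqref{inteqn:y}, use $0\le g,f_n\le 1$ to get the convex-combination upper bound for positive invariance, and then estimate the increment $|x(t_2)-x(t_1)|$ (resp.\ $|y(t_2)-y(t_1)|$) via $|e^{-c\Delta}-1|\le c\Delta$ together with the already-established pointwise bound. The paper simply takes $\tau=0$ in one shot rather than iterating over unit intervals, and its Lipschitz estimate in (ii) lands exactly on $8d$ where yours gives $6d\le 8d$, but these are cosmetic differences.
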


\begin{proof}
Let $x=x^\phi:[-1,\infty)\to \mathbb{R} $ and 
$y=y^\psi:[-1,\infty)\to \mathbb{R} $ be solutions of \eqref{eqn:Eg} and 
\eqref{eqn:Efn} with $\phi\in C_{0,d/c}$ and $\psi\in C_{0,2d/c}$, respectively. 
The nonegativity of $x$ and $y$ is clear from equations \eqref{inteqn:x}, \eqref{inteqn:y}. 
By using $0\le g\le 1$, $0\le f_n\le 1$ and \eqref{inteqn:x},\eqref{inteqn:y}, for $t\ge 0$ it follows that
$$
x(t)=e^{-ct}x(0)+d\int_0^t e^{-c(t-s)}g(x(s-1))\, ds
\le e^{-ct}\dfrac{d}{c}+\dfrac{d}{c}\left(1-e^{-ct} \right)=\dfrac{d}{c},
$$ 
and
$$
y(t)=e^{-a_nt}y(0)+b_n\int_0^t e^{-a_n(t-s)}f_n(y(s-1))\, ds
\le e^{-a_n t}\dfrac{2d}{c}+\dfrac{b_n}{a_n}\left(1-e^{-a_nt} \right)\le\dfrac{2d}{c}.
$$ 
provided  $n $ is so large that $\frac{b_n}{a_n}\le \frac{2d}{c}$.

For $0\le t_1 < t_2$, equations \eqref{inteqn:x}, \eqref{inteqn:y} and the 
bounds obtained above 
imply
\begin{align*}
|x(t_2)-x(t_1)|&= \left| e^{-c(t_2-t_1)}x(t_1)-x(t_1)+d\int_{t_1}^{t_2} e^{-c(t_2-s)}g(x(s-1))\, ds \right|\\
& \le \dfrac{2d}{c}\left(1-e^{-c(t_2-t_1)} \right)\le 2d(t_2-t_1),
\end{align*}
and
\begin{align*}
|y(t_2)-y(t_1)|&= \left| e^{-a_n(t_2-t_1)}y(t_1)-y(t_1)+b_n\int_{t_1}^{t_2} e^{-a_n(t_2-s)}f_n(y(s-1))\, ds \right|\\
& \le \left(y(t_1)+\dfrac{b_n}{a_n} \right)\left(1-e^{-a_n(t_2-t_1)} \right)
\le \left(\dfrac{2d}{c}+\dfrac{b_n}{a_n} \right)a_n (t_2-t_1)\le 
8d(t_2-t_1),
\end{align*}
provided $\frac{b_n}{a_n}\le \frac{2d}{c}$ and $a_n\le 2c$.

Statements (i) and (ii) are straightforward consequences of the above estimations. 
\end{proof}	
	
The sets 	$C_{0,d/c}^{2d}$ and $C_{0,2d/c}^{8d}$ are compact subsets 
of $C^+$. Then, by Proposition \ref{prop:bounds}, for each $\phi\in C_{0,d/c}$ and $\psi\in C_{0,2d/c}$, the $\omega$-limit sets
$$
\omega_\Gamma(\phi)=\left\lbrace \chi\in C^+: \Gamma(t_n,\phi) \to \chi 
\text{ for some sequence }(t_k)_1^\infty \text{ with }t_k\to\infty  \right\rbrace 
$$
and 
$$
\omega_{\Phi^n}(\phi)=\left\lbrace \chi\in C^+: \Phi^n(t_k,\phi) \to \chi 
\text{ for some sequence }(t_k)_1^\infty \text{ with }t_k\to\infty  \right\rbrace 
$$
are nonempty, compact subsets in   $C_{0,d/c}^{2d}$ and $C_{0,2d/c}^{8d}$ 
with the properties 
$$
\Gamma(t,\phi) \to  \omega_\Gamma(\phi), \quad 
\Phi^n(t,\phi) \to \omega_{\Phi^n}(\phi) \text{ as }t\to\infty, 
$$
respectively. 
In addition, by the continuity of $\Phi^n$, the limit set $\omega_{\Phi^n}(\phi)$ is invariant as well.

Now we recall the main result, Theorem 3.1 of \cite{BKSZ} as a proposition,  to show that our 
conditions $(C_g)(a)(b)$ and $(C_{f_n})(a)(b)$	guarantee the existence of  stable periodic 
orbits $\mathcal{O}^n$ of equations \eqref{eqn:Efn}. We state 
it in a slightly weaker form since here we require stronger conditions 
on $g$ than those in \cite{BKSZ}.

Condition $(C_g)(b)$ assumes the existence of a periodic solution 
$p:\R\to\R$ of \eqref{eqn:Eg} with minimal period $\omega_p$. Let 
	$
	\mathcal{O} =\{ p_t : t \in [0,\omega_p]\}
	$
be the corresponding periodic orbit. For the given periodic solution $p$, Section 2 of \cite{BKSZ} defines 
$m=[\omega_p]$ as the integer part of $\omega_p$, and chooses the 
constants $\kappa_1,\kappa_2$ such that
$$0<\kappa_1<\min_{t\in\R}p(t),\quad   \max_{t\in\R}p(t)<\kappa_2.
$$ 

For the construction of the two additional constants  
	$\varepsilon_0, \varepsilon_1\in(0,1)$, in which 
only the properties of $p$ are used,  
	we refer to Section 2 of \cite{BKSZ}.  It is important to note 
	that for any $\varepsilon>0$ the constants  
	$\varepsilon_0, \varepsilon_1$ can be chosen with 
	$\varepsilon_0<\varepsilon$.

	\begin{proposition}\label{thm:main0}\emph{(\cite{BKSZ}, Theorem 3.1.)}
		Suppose that condition $(C_g)(a)(b)$ holds, parameters  $a>0$, $b>0$, and 
		a $C^1$-smooth function $f:[0,\infty)\to [0,1)$ 
		 with $f(0)=0$ are given  such that
		\begin{itemize}
			\item[(i)] 
			$\displaystyle |c-a| (1+\varepsilon_0) 
			 <  
			\frac{\varepsilon_1}{2},\quad 
			|d-b|
			 <  
			\varepsilon_1,$
			\item[(ii)] 
			$\displaystyle \|f-g\|_{[\kappa_1,1-\varepsilon_1]\cup[1+\varepsilon_1,\kappa_2]} 
			 <  
			\varepsilon_1,
			\quad  
			 b  \|f\|_{[1+\varepsilon_0,\infty)} 
			 <  
			\frac{\varepsilon_1}{2},$ 
			\item[(iii)]
			$\displaystyle 
			b\Bigl(1+\frac{4b}{a}\Bigr) \|f'\|_{[1+\varepsilon_0,\kappa_2]}
			\Bigl(1+b \|f'\|_{[\kappa_1,\kappa_2]}\Bigr)^m
			 <  
			1.
			$
		\end{itemize}
		Then equation \eqref{eqn:Ef} admits a periodic solution 
		$
		q: \mathbb{R}\to\mathbb{R}
		\quad
		(\text{with minimal period } \omega_q>0),
		$
		such that
		$
		|\omega_q-\omega_p|
		 <  
		\frac{\varepsilon_0}{c},
		\quad
		\mathcal{O}_q 
		 =  
		\{ q_t : t\in[0,\omega_q]\}
		$
		is hyperbolic, orbitally stable, exponentially attractive with asymptotic phase, and 
		$\dist(\mathcal{O}_q, \mathcal{O}_p) < \varepsilon_0.$
		Moreover,  the region of attraction of $\mathcal{O}_q$ includes
		the set $
		\{ 
		\phi \in C^+ 
		:  
		\phi(s)\ge 1+\varepsilon_0 \text{ for all } s\in [-1,0]
		\}$.
	\end{proposition}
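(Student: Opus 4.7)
The plan is to construct a periodic solution of \eqref{eqn:Ef} near the orbit $\mathcal{O}=\{p_t:t\in[0,\omega_p]\}$ of \eqref{eqn:Eg} by a Poincar\'e-type fixed-point argument, and then to analyse its stability by bounding the spectrum of the linearised period map. The key technical ingredient is a closeness estimate between solutions of \eqref{eqn:Ef} and \eqref{eqn:Eg} on compact time intervals of length $\approx\omega_p$, propagated from the segment level.

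First I would prove that, for initial segments $\phi\in C^+$ sufficiently close to $p_0$, the solution $y^\phi$ of \eqref{eqn:Ef} stays uniformly close to $x^\phi$ of \eqref{eqn:Eg} on $[0,\omega_p+1]$. Because $g$ is discontinuous at $\xi=1$, uniform closeness of $f$ to $g$ fails there, so I would split the time interval according to where $p(t-1)$ lies. On the set $\{t : p(t-1)\ge 1+\e_1\}$ one has $g\equiv 0$ and, by assumption (ii), $b\|f\|_{[1+\e_0,\infty)}<\tfrac{\e_1}{2}$, so both equations reduce to essentially linear decay. On $\{t : p(t-1)\le 1-\e_1\}$ the $C^0$-closeness from (ii) combined with the parameter bounds from (i) delivers a Gronwall-type estimate. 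Near the finitely many crossings of $p(t-1)$ through $1$, the nondegeneracy encoded in $(C_g)(b)$ — in particular the transversal character of $p$ at $t=0$ and the exclusion of a second crossing touching $g^{-1}(c/d)$ — bounds the time spent in the intermediate zone by a constant times $\e_1$, so its contribution can be absorbed into the overall error.

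With these closeness estimates in hand I would pick a Poincar\'e section $\Sigma$ transverse to the flow of \eqref{eqn:Eg} at $p_0$ (for instance, segments $\phi$ with $\phi(0)=1$ and a prescribed crossing direction, or a codimension-one hyperplane obtained from a well-chosen linear functional), show that the return map of \eqref{eqn:Ef} is well defined on a small neighbourhood of $p_0$ in $\Sigma$, and that it maps this neighbourhood into itself. A Schauder fixed-point argument then yields a periodic solution $q$ of \eqref{eqn:Ef} with period $\omega_q$ satisfying $|\omega_q-\omega_p|<\e_0/c$ and $\dist(\mathcal{O}_q,\mathcal{O})<\e_0$. For the region-of-attraction claim, I would exploit the following simple mechanism: if $\phi\ge 1+\e_0$ on $[-1,0]$, then $y^\phi(t-1)\ge 1+\e_0$ on $[0,1]$, so by (ii) the nonlinear term is negligible and $y^\phi$ decays nearly exponentially; iterating and combining with Step 1, the orbit enters a fixed compact neighbourhood of $\mathcal{O}_q$ on which the attraction from the next step takes over.

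The main obstacle I expect is the stability analysis. The linearised period map at $q_0$ is an operator on $C$ whose norm I would estimate by decomposing each delay unit into contributions from the ``lower'' region $[\kappa_1,1-\e_0]$, where $f'$ is close to $g'$, and the ``upper'' region $[1+\e_0,\kappa_2]$, where $f$ is potentially nonmonotone. The factor $(1+b\|f'\|_{[\kappa_1,\kappa_2]})^m$ in condition (iii) captures the worst-case amplification over $m\approx\omega_p$ delay units, while the prefactor $b(1+4b/a)\|f'\|_{[1+\e_0,\kappa_2]}$ isolates the single ``bad'' excursion through the destabilising region. Condition (iii) asserts that even this combined product is strictly less than $1$, which, together with the trivial Floquet multiplier $1$ arising from translation invariance, should force the remaining spectrum strictly inside the unit disk. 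Standard perturbation results for hyperbolic periodic orbits of retarded functional differential equations then yield orbital stability and exponential attraction with asymptotic phase.
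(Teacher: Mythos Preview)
This proposition is not proved in the present paper: it is explicitly introduced as a recollection of Theorem~3.1 from \cite{BKSZ}, stated in slightly weaker form, and no argument is given here beyond the citation. So there is no ``paper's own proof'' to compare against; the authors simply import the result and then use it (in Proposition~\ref{prop:main0-application}) by checking that conditions (i)--(iii) hold for $a_n,b_n,f_n$ when $n$ is large.

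Your sketch is a plausible outline of the kind of argument carried out in \cite{BKSZ}: closeness estimates exploiting that $g\equiv 0$ above $1$ and $f$ is $C^0$-close to $g$ away from $1$, a return-map construction near $p_0$ using the transversality encoded in $(C_g)(b)$, and a spectral-radius bound on the monodromy operator coming from condition~(iii). Two caveats are worth flagging if you intend to flesh this out. First, a Schauder argument alone gives existence but neither uniqueness nor hyperbolicity; in \cite{BKSZ} the return map is shown to be a contraction on a suitable set (this is where the product bound in (iii) does real work), which simultaneously yields the fixed point, its uniqueness, and the spectral estimate. Second, your treatment of the region of attraction is too quick: from $\phi\ge 1+\e_0$ the solution first decays nearly exponentially, but you must then control it once it drops below $1$ and the full nonlinearity re-enters; the mechanism in \cite{BKSZ} is that after one delay unit the segment has the exact shape $e^{-a(\cdot)}$ up to a small error, which is precisely the shape of $p$ on its decreasing arc, so one lands directly in the contraction domain of the return map rather than merely ``near $\mathcal{O}_q$''.
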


The next result shows that conditions $(C_g)(a)(b)$ and $(C_{f_n})(a)(b)$ guarantee the existence of stable periodic orbits for equations   
\eqref{eqn:Efn} with sufficiently large $n$. 

	\begin{proposition}\label{prop:main0-application}
		Suppose conditions $(C_g)(a)(b)$ and $(C_{f_n})(a)(b)$ hold. 
		Let $\varepsilon>0$ be given. Then, for all sufficiently large  
		 $n$, equation \eqref{eqn:Efn} admits a hyperbolic, orbitally stable, exponentially attractive periodic orbit $\mathcal{O}^n$, so that 
		the region of attraction of   $\mathcal{O}^n$ contains 
		the set 
		$\{ \psi \in C^+ :  \psi(s)\ge 1+\varepsilon, \text{ for all }
		s\in [-1,0]	\}$.
	\end{proposition}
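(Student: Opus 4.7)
The plan is to verify the three numerical hypotheses (i), (ii), (iii) of Proposition \ref{thm:main0} for the equations \eqref{eqn:Efn} with $a=a_n$, $b=b_n$, $f=f_n$, for all sufficiently large $n$. The constants $\varepsilon_0,\varepsilon_1\in(0,1)$ and $\kappa_1,\kappa_2,m$ are those attached to the periodic orbit $\mathcal{O}$ of \eqref{eqn:Eg} from \cite{BKSZ}; recall that $\varepsilon_0$ may be chosen arbitrarily small. Given $\varepsilon>0$ as in the statement, I would first choose $\varepsilon_0\in(0,\varepsilon)$ and then fix the associated $\varepsilon_1$. Once hypotheses (i)--(iii) are established, Proposition \ref{thm:main0} produces a hyperbolic, orbitally stable, exponentially attractive periodic orbit $\mathcal{O}^n$ whose region of attraction contains $\{\phi\in C^+:\phi(s)\ge 1+\varepsilon_0\}$; since $\varepsilon_0<\varepsilon$, the prescribed set $\{\psi\in C^+:\psi(s)\ge 1+\varepsilon\}$ is contained in this region, which is exactly what is claimed.

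Verifying (i) is immediate from $(C_{f_n})(a)$: since $a_n\to c$ and $b_n\to d$, both inequalities hold for all large $n$. For (ii), the first estimate $\|f_n-g\|_{[\kappa_1,1-\varepsilon_1]\cup[1+\varepsilon_1,\kappa_2]}<\varepsilon_1$ is a direct application of the first convergence in $(C_{f_n})(b)$, taking $\kappa=\varepsilon_1$ (possibly shrunk so that $[\kappa_1,1-\varepsilon_1]\cup[1+\varepsilon_1,\kappa_2]\subset[0,1-\kappa]\cup[1+\kappa,\infty)$). The second estimate $b_n\|f_n\|_{[1+\varepsilon_0,\infty)}<\varepsilon_1/2$ uses that $g\equiv 0$ on $(1,\infty)$, so $\|f_n\|_{[1+\varepsilon_0,\infty)}=\|f_n-g\|_{[1+\varepsilon_0,\infty)}\to 0$, and $b_n\to d$ keeps the factor bounded.

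The main obstacle is hypothesis (iii), which demands a nontrivial interplay between the size of $f_n'$ above and below $1$. I would estimate
\[
\bigl(1+b_n\|f_n'\|_{[\kappa_1,\kappa_2]}\bigr)^m
\le
2^m\Bigl(1+b_n^m\|f_n'\|_{[\kappa_1,\kappa_2]}^m\Bigr),
\]
and split $\|f_n'\|_{[\kappa_1,\kappa_2]}\le\|f_n'\|_{[\kappa_1,1-\kappa]}+\|f_n'\|_{[1-\kappa,1+\kappa]}+\|f_n'\|_{[1+\kappa,\kappa_2]}$. The first summand stays bounded (it tends to $\|g'\|_{[\kappa_1,1-\kappa]}$), but the middle summand cannot be controlled by the $C^1$-closeness of $f_n$ to $g$ near the point $1$ where $g$ fails to be differentiable; instead one must use the global sup $\|f_n'\|_{[0,\infty)}$, which may blow up. This is exactly the reason for the second convergence in $(C_{f_n})(b)$. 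Multiplying through, the dangerous term to control is
\[
\|f_n'\|_{[1+\varepsilon_0,\kappa_2]}\,\|f_n'\|_{[\kappa_1,\kappa_2]}^m
\le
\|f_n'\|_{[1+\varepsilon_0,\infty)}\,\bigl(\|f_n'\|_{[0,\infty)}\bigr)^m,
\]
which tends to $0$ by that hypothesis. The remaining term $\|f_n'\|_{[1+\varepsilon_0,\kappa_2]}$ multiplied by the constant part of the expansion tends to $0$ by $\|f_n'-g'\|_{[1+\varepsilon_0,\infty)}\to 0$ and $g'\equiv 0$ there. Since $b_n(1+4b_n/a_n)$ stays bounded, the full product in (iii) tends to $0$, so the inequality holds for all large $n$. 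With (i)--(iii) verified, Proposition \ref{thm:main0} applied to \eqref{eqn:Efn} yields $\mathcal{O}^n$ with all the required properties, completing the proof.
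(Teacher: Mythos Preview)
Your proof is correct and follows essentially the same approach as the paper: choose $\varepsilon_0<\varepsilon$, then verify hypotheses (i)--(iii) of Proposition~\ref{thm:main0} for $a=a_n$, $b=b_n$, $f=f_n$ using $(C_{f_n})(a)(b)$. The only minor difference is the algebra for (iii): the paper first observes that $\|f_n'\|_{[0,\infty)}\to\infty$ and factors $(1+2d\|f_n'\|_{[0,\infty)})^m=(\|f_n'\|_{[0,\infty)})^m(1/\|f_n'\|_{[0,\infty)}+2d)^m\le(\|f_n'\|_{[0,\infty)})^m(1+2d)^m$, whereas your inequality $(1+x)^m\le 2^m(1+x^m)$ achieves the same reduction without needing that divergence observation.
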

	
	\begin{proof} 
	We check conditions (i), (ii), (iii) of Proposition~\ref{thm:main0} in turn, under the assumptions $(C_{f_n})(a)(b)$.  By the remark preceding Proposition  \ref{thm:main0}, the constants $\varepsilon_0, \varepsilon_1$ can be chosen 
	so that 	$\varepsilon_0<\varepsilon$. 
	Then in order to prove  the proposition it is sufficient to apply Proposition  \ref{thm:main0} with $a=a_n$, $b=b_n$, $f=f_n$, $m=[\omega_p]$, and show that 
 	conditions (i), (ii), (iii) of Proposition \ref{thm:main0} hold for all 
 	sufficiently large $n$. 
 	
 \((C_{f_n})(a)\) implies that, for all large $n$, (i) is satisfied  with  $a=a_n$, $b=b_n$, and $b_n<2d$, $a_n>c/2$. 
 
 Choose 	$\kappa = \min\{\varepsilon_0, \varepsilon_1\}$. Then 
 \((C_{f_n})(b)\) guarantees that (ii) holds with $f=f_n$  provided $n$ 
 is large enough. 
 
 It remains to verify (iii) of  Proposition \ref{thm:main0}  
 with $a=a_n$, $b=b_n$, $f=f_n$, for all large $n$.
 By $b_n<2d$, $a_n>c/2$ and the choice of $\kappa$, for large $n$, 
 we have 
 \begin{align*}
 	b_n\Bigl(1+\tfrac{4b_n}{a_n}\Bigr)
 	\|f_n'\|_{[1+\varepsilon_0,\kappa_2]}
 	\Bigl(1+b_n \|f_n'\|_{[\kappa_1,\kappa_2]}\Bigr)^m 
 	\\
 	<2d\Bigl(1+\tfrac{16d}{c}\Bigr)
 	\|f_n'\|_{[1+\kappa,\infty)}
 	\Bigl(1+2d \|f_n'\|_{[0,\infty)}\Bigr)^m.  	
 \end{align*}
 
It suffices to guarantee that the last term is less than 1 
for all large $n$.   
 Observe that \((C_g)(a)\) and \((C_{f_n})(b)\) together imply 
 $$
 \|f_n'\|_{[0,\infty)}\to\infty \text{ as }n\to\infty.
 $$
So,  $ \|f_n'\|_{[0,\infty)}>1$ for all large $n$. 
Then, for all large $n$, 
\begin{align*}
\|f_n'\|_{[1+\kappa,\infty)}
 	\Bigl(1+2d \|f_n'\|_{[0,\infty)}\Bigr)^m
 	& = \|f_n'\|_{[1+\kappa,\infty)}
 	\Bigl(\|f_n'\|_{[0,\infty)}\Bigr)^m
 	\Bigl(\dfrac{1}{\|f_n'\|_{[0,\infty)}}+2d \Bigr)^m
 	\\
 	& \le 
 	\|f_n'\|_{[1+\kappa,\infty)}
 	\Bigl(\|f_n'\|_{[0,\infty)}\Bigr)^m
 	\Bigl(1+2d \Bigr)^m,
\end{align*} 
and by \((C_{f_n})(b)\) we conclude that condition (iii) holds for all large $n$. 
This completes the proof. 
	\end{proof}

The next result shows that, for large $n$, the stationary solutions of equation \eqref{eqn:Efn} are close to thae stationary solutions of 
\eqref{eqn:Eg}.

\begin{proposition}\label{prop:zeros-fn} 
Suppose conditions $(C_g)(a)$ and $(C_{f_n})(a)(b)$ hold. Let $\kappa\in (\xi_1,1)$. 
Then, for all sufficiently large $n$, 
  the only zeros of the maps $[0,\kappa]\ni\xi\mapsto -a_n\xi +b_nf_n(\xi)\in \R$ are  $0$ and $\xi_{1,n}$, moreover,  
$ \xi_{1,n}\to \xi_1$ as $n\to\infty$. 
\end{proposition}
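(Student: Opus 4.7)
The plan is to compare $h_n(\xi) := -a_n\xi + b_n f_n(\xi)$ with the limiting map $h(\xi) := -c\xi + dg(\xi)$ on $[0,\kappa]$. Choose $\kappa^* \in (0, 1-\kappa)$; then $(C_{f_n})(a)(b)$ yields uniform convergence $h_n \to h$ and $h_n' \to h'$ on $[0,\kappa] \subset [0, 1-\kappa^*]$, while $h_n(0) = h(0) = 0$ for all $n$. For $h$ itself, $(C_g)(a)$ implies that $g(\xi)/\xi$ is strictly increasing on $(0,1]$ with $g(\xi)/\xi \to g'(0)=0$ as $\xi \to 0^+$, so $h(\xi)/\xi = -c + d\,g(\xi)/\xi$ is strictly increasing on $(0,\kappa]$ and vanishes only at $\xi_1$; hence $h < 0$ on $(0,\xi_1)$, $h > 0$ on $(\xi_1,\kappa]$, and $h'(\xi_1) = -c + d\,g'(\xi_1) > 0$ because $g'(\xi_1) > g(\xi_1)/\xi_1 = c/d$.

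With these facts in hand, I would decompose $[0,\kappa]$ into three regions. Near $\xi_1$: for some small $\delta > 0$, $h' > 0$ on $[\xi_1-\delta, \xi_1+\delta]$, and uniform $C^1$-convergence gives $h_n' > 0$ there together with $h_n(\xi_1 - \delta) < 0 < h_n(\xi_1 + \delta)$ for all large $n$. The intermediate value theorem combined with strict monotonicity produces a unique zero $\xi_{1,n}$ in this neighborhood, and since $\delta$ is arbitrary, $\xi_{1,n} \to \xi_1$. On any compact set of the form $[\eta, \xi_1 - \delta] \cup [\xi_1 + \delta, \kappa]$ with $\eta > 0$, $h$ is continuous and bounded away from zero, so uniform convergence of $h_n$ to $h$ rules out further zeros of $h_n$ there for large $n$.

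The delicate region is the right neighborhood $(0, \eta]$ of the origin, where both $h$ and $h_n$ vanish at $0$ and uniform convergence of $h_n$ to $h$ alone is not sharp enough to preclude a spurious zero of $h_n$. Here I would rescale by $\xi$: since $f_n - g$ vanishes at $0$, the mean value theorem gives $|f_n(\xi) - g(\xi)| \le \|f_n' - g'\|_{[0,\eta]}\cdot\xi$ for $\xi \in (0, \eta]$, hence
$$
\frac{f_n(\xi)}{\xi} \le \frac{g(\xi)}{\xi} + \|f_n' - g'\|_{[0,\eta]} \le \frac{g(\eta)}{\eta} + \|f_n' - g'\|_{[0,\eta]}.
$$
Fixing $\eta$ small enough that $d\,g(\eta)/\eta < c/2$ (possible since $g(\xi)/\xi \to 0$), then taking $n$ large so that the second term is negligible and $a_n, b_n$ are close to $c, d$, we obtain $b_n f_n(\xi) < a_n \xi$ on $(0,\eta]$, i.e., $h_n < 0$ there. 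The main obstacle is precisely this $\xi \to 0^+$ regime: the argument relies crucially on $g'(0) = 0$ and on $C^1$-closeness (rather than mere $C^0$-closeness) of $f_n$ to $g$, which is exactly the strength built into $(C_{f_n})(b)$.
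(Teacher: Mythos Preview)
Your proof is correct and follows essentially the same decomposition as the paper: a neighborhood of $\xi_1$ where the derivative is bounded below, a neighborhood of $0$, and the compact remainder where $h$ is bounded away from zero. The only substantive difference is your treatment of the neighborhood $(0,\eta]$ of the origin, which you flag as ``the main obstacle'' and handle by a quotient argument controlling $f_n(\xi)/\xi$. The paper's route here is more direct and worth knowing: since $g'(0)=0$, we have $\alpha'(0)=-c+dg'(0)=-c<0$, so by continuity $\alpha'<-c_2$ on some $[0,c_1]$; then $C^1$-convergence gives $\alpha_n'<-c_2/2$ on $[0,c_1]$ for large $n$, and since $\alpha_n(0)=0$, strict monotonicity rules out zeros in $(0,c_1]$. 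This avoids the rescaling entirely and makes the near-$0$ region no harder than the near-$\xi_1$ region---so the step you singled out as delicate is, in the paper's formulation, just another application of the same derivative-sign argument.
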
	
	 
\begin{proof}

For $\xi\in [0,\kappa]$, define the functions 
$$
\alpha(\xi)=-c\xi+dg(\xi),\quad \alpha_n(\xi)=-a_n\xi+b_nf_n(\xi).
$$

From $(C_g)(a)$ one has $ \alpha'(0)<0 $, $ \alpha'(\xi_1)>0 $, and 
$\alpha(\xi)<0$ for $\xi\in(0,\xi_1)$, $\alpha(\xi)>0$ for $\xi\in(\xi_1,\kappa]$. 
Then, by continuity, there exist positive constants $c_1,c_2,c_3$ so that 
 \begin{equation*}
\begin{aligned}
\alpha'(\xi )<-c_2 & \text{ for } \xi\in[0,c_1], \\
\alpha'(\xi )>c_2 & \text{ for } \xi\in[\xi_1-c_1,\xi_1+c_1], \\
\alpha(\xi )<-c_3 & \text{ for } \xi\in[c_1,\xi_1-c_1], \\
\alpha(\xi )>c_3 & \text{ for } \xi\in[\xi_1+c_1,\kappa]. 
\end{aligned}
\end{equation*}

From $(C_{f_n})(a)(b)$ it follows that there is an $n_2$ such that, for all 
$n>n_1$,
 \begin{equation*}
\begin{aligned}
\alpha_n'(\xi )<-\frac{c_2}{2} & \text{ for } \xi\in[0,c_1], \\
\alpha_n'(\xi )> \frac{c_2}{2} & \text{ for } \xi\in[\xi_1-c_1,\xi_1+c_1], 
\\
\alpha_n(\xi )<-\frac{c_3}{2} & \text{ for } \xi\in[c_1,\xi_1-c_1], \\
\alpha_n(\xi )>\frac{c_3}{2} & \text{ for } \xi\in[\xi_1+c_1,\kappa], 
\end{aligned} 
\end{equation*}
and hence the satement for the unique existence of $0$ and $\xi_{1,n}$ 
follows. Clearly, $ \xi_{1,n}\to \xi_1$ as $n\to\infty$. 
\end{proof}

	\section{Results on monotone feedback}\label{sec3} 
	The feedback functions we study are  monotone on certain intervals.  For solutions whose ranges lie in these monotone regions, we may apply various well-known results from the theory of monotone semiflows, see 
	e.g. \cite{Hirsch}, \cite{Smith}. 
	In particular, several results, that we mention without proofs, are from    \cite{KWW}.  
		 
	Assume	that 
	\begin{enumerate}
		\item[(H)]
		the real constants $a>0, b>0,\xi_-<0,\xi_+>0$ and a bounded, continuously differentiable function $h:\mathbb{R}\to\mathbb{R}$ are given, such that $h(0)=0,  b h'(0)>a$,  $ -a \xi_\pm + b h(\xi_\pm)=0,  \frac{h(\xi)}{\xi} > \frac{a}{b} \text{ for all } \xi \in (\xi_-,0)\cup(0,\xi_+), \text{ and } h'(\xi) > 0 \text{ for all } \xi \in (-\infty,\xi_-)\cup(\xi_-,\xi_+)\cup(\xi_+,\infty).$
	\end{enumerate}

Consider the delay differential equation
	\begin{equation}\label{eqn:Eh}
		\tag{$E_{h}$}
		u'(t)= -au(t) + bh\left(u(t-1)\right).
	\end{equation}	

Observe that (H) requires $h'>0$ everywhere with the possible exception 
at $\xi_-$ and $\xi_+$. On the other hand, \cite{KWW} supposed $h'>0$ 
everywhere. Here we consider only 
solutions with ranges in the interval $(\xi_-,\xi_+)$ where \cite{KWW} 
is applicable.  We recall some results from \cite{KWW}. 

For each initial function $\phi \in C$, there is a unique solution $u^\phi: [-1,\infty)\to\mathbb{R}$ of \eqref{eqn:Eh} with $u^\phi_0=\phi$.  The map 
	$	\Phi: [0,\infty)\times C  \ni 
	(t,\phi) \mapsto  u_t^\phi \in C	$ 
	is a continuous semiflow with stationary points $0\in C$, 
	$\hat{\xi}_- \in C$, $\hat{\xi_+} \in C$ where 
	$\hat{\xi}_\pm(s)= \xi_\pm$ for all $ -1\le s\le 0$.  
Define the subset 
$$
C_{\xi_-,\xi_+}=\{\phi\in C: \xi_-<\phi(s)<\xi_+ \text{ for all }s\in[-1,0]\}
$$
of $C$. 
It is easy to see that $C_{\xi_-,\xi_+}$ is positively invariant under the semiflow, that is, 
$$\Phi([0,\infty)\times C_{\xi_-,\xi_+})\subset 
C_{\xi_-,\xi_+}.  
$$	 

	 	Since $h$ is $C^1$-smooth, each map $\Phi(t,\cdot)$ is continuously differentiable.  For $\phi,\psi\in C$,  $D_2\Phi(t,\phi)\psi= v_t$, where $v$ solves the linear variational equation 
\begin{equation}\label{linvar}
v'(s) = -av(s) + bh'\left(u^\phi(s-1)\right)v(s-1)
\end{equation} 
along $u^\phi$	 with $v_0=\psi$.  
In particular, $D_2\Phi(t,0)$ generates a strongly continuous semigroup of bounded linear operators on $C$, and the variational equation along  the stationary point $0$ reads
	 \begin{equation}\label{linvar-equ}
	 	v'(t) = -av(t) + bh'(0)v(t-1).
	 \end{equation}
	
	 The spectrum of the generator associated with $\left(D_2\Phi(t, 0)\right)_{t\ge 0}$ is given by the zeros of the characteristic equation
	 \begin{equation}\label{gen_chareq}
	 	\C\ni\lambda\mapsto \lambda + a - bh'(0)e^{-\lambda} \in\C.
	 \end{equation}	
	 
	 By condition $bh'(0)>a$, there is a unique positive $\lambda_0>0$, and the remaining zeros occur in complex conjugate pairs $\lambda_j,\overline{\lambda_j}$, $j\in\N$,  with 
	 $(2j-1)\pi<\Im \lambda_j < 2j\pi$, 
	 $
	 \lambda_0 > \Re \lambda_1 > \Re \lambda_2 > \ldots$, 
	 and 
	 $ \Re \lambda_j \to -\infty$  as $j\to\infty$. 
	 All zeros are simple, see e.g. \cite{DVLGW}. 
	 
	 The eigenspace corresponding to $\lambda_0$ is one-dimensional, spanned by $\chi_0\in C$, $\chi_0(s) = e^{\lambda_0 s}$, $s\in[-1,0]$.  Denote this one-dimensional subspace by $P=\mathbb{R}\chi_0$, and let $Q$ be the real generalized eigenspace corresponding to the rest of the spectrum.  Then $C=P\oplus Q$. Let $\mathrm{Pr}_P$ and $\mathrm{Pr}_Q$ denote the projections into $P$ and $Q$ along  $Q$ and $P$, respectively. 
	
	For $\phi,\psi\in C$, introduce $\phi \le \psi$ and  $\phi \ll \psi$
if $	\phi(s)\le\psi(s) $,  $ s\in[-1,0]$, and $	\phi(s)<\psi(s) $,  $ s\in[-1,0]$, respectively. We write  $\phi < \psi $ if $	\phi\le\psi $ and $\phi\ne \psi$. 
The relations $\ge$, $\gg$, $>$ are defined analogously. 

By the positive invariance of $C_{\xi_-,\xi_+}$,  \cite{KWW} applies to show that the semiflow 
$\Phi$ is monotone in the sense that
\begin{equation}\label{monotone1}
\begin{array}{rl}
\phi,\psi\in C_{\xi_-,\xi_+},\ \phi\le \psi,\ t\ge 0\ &\text{ imply }\ \Phi(t,\phi)\le \Phi(t,\psi),\\
\phi,\psi\in C_{\xi_-,\xi_+},\ \phi\ll \psi,\ t\ge 0\ &\text{ imply }\ \Phi(t,\phi)\ll \Phi(t,\psi),\\
\phi,\psi\in C_{\xi_-,\xi_+},\ \phi < \psi,\ t\ge 2\ &\text{ imply }\ \Phi(t,\phi)\ll \Phi(t,\psi). 
\end{array}
\end{equation}
Moreover, the derivatives $ D_2\Phi(t,\phi)$ with $\phi\in C_{\xi_-,\xi_+}$ and $t\ge 0$ are monotone, as well. That is,  
\begin{equation}\label{monotone2}
\begin{array}{rl}
\psi\in C,\ 0\le \psi \ t\ge 0\ &\text{ imply }\ 0\le D_2\Phi(t,\phi)\psi, \\
\psi\in C,\ 0\ll\psi \ t\ge 0\ &\text{ imply }\ 0\ll D_2\Phi(t,\phi)\psi,\\
\psi\in C,\ 0<\psi, \ t\ge2 \ &\text{ imply }\ 0\ll D_2\Phi(t,\phi)\psi.
\end{array}
\end{equation}

	Let
	$$
	S  = 
	\bigl\{
	\phi\in C_{\xi_-,\xi_+} 
	: 
	\bigl(u^\phi)^{-1}(0)
	\ \text{is not bounded from above}
	\}. 
	$$
	By \eqref{monotone1}, for any $\phi\in S$, $u^\phi$ contains at least one zero in all intervals $[t-1,t]$ with $t\ge 0$. 	

 Since $S$ is a subset of $C_{\xi_-,\xi_+}$, it is not identical with  the separatrix defined in  \cite{KWW}. 
However, it has some analogous properties.  

\begin{proposition}\label{prop:conv} Suppose $\phi \in S$ and  $\psi \in C_{\xi_-,\xi_+} $.  
\begin{enumerate}
\item[(i)] If $\psi\gg\phi$ then $\Phi(t,\psi)\to \hat{\xi}_+$ as $t\to\infty$.  
		\item[(ii)] If $\psi\ll\phi$ then $\Phi(t,\psi)\to \hat{\xi}_-$ as $t\to\infty$.
\end{enumerate}
\end{proposition}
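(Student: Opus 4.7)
The plan is to prove part (i); part (ii) follows by a symmetric argument with $\hat{\xi}_+$ replaced by $\hat{\xi}_-$ and the orderings reversed. First, strong monotonicity (\eqref{monotone1}) lifts $\phi \ll \psi$ to $\Phi(t,\phi) \ll \Phi(t,\psi)$ for all $t \ge 0$.

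The main reduction is that it suffices to produce a time $T \ge 0$ and a constant $\delta \in (0,\xi_+)$ such that $\Phi(T,\psi) \ge \hat{\delta}$. Granted this, monotonicity yields $\Phi(t,\psi) \ge \Phi(t-T,\hat{\delta})$ for $t \ge T$. Condition (H) gives $-a\delta + bh(\delta) > 0$, so during $[0,1]$ the scalar ODE $u'(t) = -au(t)+bh(\delta)$ drives $u^{\hat{\delta}}$ toward the level $bh(\delta)/a > \delta$, whence $\Phi(1,\hat{\delta}) \ge \hat{\delta}$. Iterating with monotonicity, $\bigl(\Phi(n,\hat{\delta})\bigr)_{n\ge 0}$ is non-decreasing, bounded above by $\hat{\xi}_+$, and its limit is a stationary point in $[\hat{\delta},\hat{\xi}_+]$. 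By (H) the only candidate is $\hat{\xi}_+$, so $\Phi(t,\hat{\delta}) \to \hat{\xi}_+$. Combined with $\Phi(t,\psi) \le \hat{\xi}_+$ (from $\psi < \hat{\xi}_+$ and monotonicity), sandwiching yields $\Phi(t,\psi) \to \hat{\xi}_+$.

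The heart of the proof is producing such $T$ and $\delta$, which I would do by contradiction. If no such $T$ exists then $u^\psi$ has zeros arbitrarily far out, so $\psi \in S$, and thus $\phi,\psi \in S$ with $\phi \ll \psi$. By the remark after \eqref{monotone2}, both $u^\phi$ and $u^\psi$ admit a zero in every interval of length $1$, which forces $u^\phi$ to revisit every neighborhood of $0$ repeatedly. I would then exploit the unstable eigenvalue $\lambda_0 > 0$ at $\hat{0}$ and the strict positivity of the eigenvector $\chi_0 \gg \hat{0}$: since $\mathrm{Pr}_P$ corresponds (via the adjoint) to a strictly positive linear functional on the positive cone, one has $\mathrm{Pr}_P(\psi-\phi) = c\chi_0$ with $c>0$. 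Whenever $u^\phi(t-1)$ is close to $0$, the coefficient $bh'(u^\phi(t-1))$ in the variational equation along $u^\phi$ is close to $bh'(0)$, so locally the dynamics of the difference $\Phi(t,\psi)-\Phi(t,\phi)$ is governed by the linearization at $\hat{0}$, in which the $\chi_0$-component grows like $e^{\lambda_0 t}$. Each such return produces multiplicative growth of the $\chi_0$-component of the strictly positive difference, which eventually contradicts the uniform bound $\|\Phi(t,\psi)-\Phi(t,\phi)\| \le \xi_+-\xi_-$.

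The main obstacle is this last step, which is the $S$-analogue of the classical statement from \cite{KWW} that the separatrix cannot contain a strongly ordered pair. Its rigorous execution requires a Floquet-type spectral analysis of the variational equation along the oscillating orbit $u^\phi$, quantitative lower bounds on the fraction of time that $u^\phi(\cdot-1)$ lies near $0$, and careful use of the interior positivity of the dominant eigenvector $\chi_0 \gg \hat{0}$.
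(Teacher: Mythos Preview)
Your reduction is sound: once $\Phi(T,\psi)\ge\hat\delta$ for some $T$ and some $\delta\in(0,\xi_+)$, the monotone iteration from $\hat\delta$ indeed forces convergence to $\hat\xi_+$, and the negation of this together with $u^\psi>u^\phi$ and the zeros of $u^\phi$ in every unit interval does give $\psi\in S$ via the intermediate value theorem. The gap you identify in your last paragraph is real, and the route you sketch would be genuinely hard to carry out: the difference $\Phi(t,\psi)-\Phi(t,\phi)$ satisfies $y'(t)=-ay(t)+\beta(t)y(t-1)$ with $\beta(t)=\int_0^1 bh'((1-s)u^\phi(t-1)+su^\psi(t-1))\,ds$, so $\beta(t)$ depends on \emph{both} orbits, and neither is known to converge; controlling the $\chi_0$-component through arbitrarily long excursions away from $0$ would require exactly the Floquet-type machinery you flag as missing.

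The paper sidesteps this entirely with one additional idea. Given $\phi\ll\psi$, the Smith--Thieme quasi-convergence theorem for strongly order-preserving semiflows (\cite{Thieme}) furnishes intermediate points $\phi\ll\phi^*\ll\psi^*\ll\psi$ whose forward orbits each converge to some equilibrium in $\{\hat\xi_-,0,\hat\xi_+\}$. Monotonicity and $\psi^*\gg\phi\in S$ exclude $\hat\xi_-$ as a limit for $\psi^*$; if $\Phi(t,\psi^*)\to\hat\xi_+$ one is done by sandwiching $\Phi(t,\psi^*)\ll\Phi(t,\psi)\ll\hat\xi_+$. The remaining case is $\Phi(t,\psi^*)\to0$, and then similarly $\Phi(t,\phi^*)\to0$. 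Now the difference $y=u^{\psi^*}-u^{\phi^*}$ satisfies the same linear equation as above, with $y_t\gg0$, $y(t)\to0$, and---this is the whole point of the reduction---$\beta(t)\to bh'(0)>a$. The equation is now asymptotically autonomous with a positive leading exponent, and an elementary comparison (pick $\mu>0$ solving $\mu+a+\varepsilon e^{a}=bh'(0)e^{-\mu}$ for small $\varepsilon$, then show $y(t)>\delta e^{\mu t}$ for large $t$) yields exponential growth of $y$, contradicting $y(t)\to0$.

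So rather than proving directly that $S$ admits no strongly ordered pair along an oscillating orbit, first squeeze down to a pair of orbits that actually converge to $0$; the variational analysis then becomes a one-line differential inequality.
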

	
\begin{proof} Suppose $\psi\gg\phi$. 
In $\overline{C_{\xi_-,\xi_+} }$,  the set of stationary points of $\Phi$ 
is $E=\{\hat\xi_-,0,\hat\xi_+\}$. 
Applying a result from \cite{Thieme}, there exist 
$\phi^*,\psi^*\in C$ so that $\phi\ll\phi^*\ll\psi^*\ll\psi$, and 
$\Phi(t,\phi^*)\to E$,  $\Phi(t,\psi^*)\to E$ as $t\to\infty$. 

By \eqref{monotone1}, in order to prove (i) it suffices to show $\Phi(t,\psi^*)\to \hat\xi_+$ as $t\to\infty$. 
If this is not true, then either  $\Phi(t,\psi^*)\to \hat\xi_-$
or $\Phi(t,\psi^*)\to 0$ as $t\to\infty$.  
$\Phi(t,\psi^*)\to \hat\xi_-$, $t\to\infty$, is impossible by \eqref{monotone1} and  $\psi^*\gg\phi\in S$. 
Therefore, $\Phi(t,,\psi^*)\to 0$ as $t\to\infty$. Similarly, one finds 
  $\Phi(t,\phi^*)\to 0$ as $t\to\infty$. 
  
Define   $y=x^{\psi^*}-x^{\phi^*}$, where $x^{\phi^*}, x^{\psi^*}:[-1,\infty)\to\mathbb{R}$ are the solutions of \eqref{eqn:Eh} with initial functions 
$\phi^*,\psi^*$, respectively. 
Then $y$ satisfies $y_t=\psi^*-\phi^*\gg 0$ for all $t\ge 0$, $\lim_{t\to\infty}y(t)=0$, and 
$$
y'(t)=-ay(t)+\beta(t)y(t-1) \text{ for all }t>0
$$
with $\beta(t)=\ds\int_0^1 bh'((1-s)x^{\phi^*}(t-1)+sx^{\psi^*}(t-1)) ds$, $t\ge 0$. 
Clearly,   $\beta(t)>0$ for all $t\ge 0$, and $\beta(t)\to bh'(0)>a$ as 
$t\to\infty$.   

By choosing $\varepsilon\in (0,[bh'(0)-a]e^{-a})$, there is a positive 
real $\mu>0$ so that 
$$
\mu+(a+\varepsilon e^{a})-bh'(0)e^{-\mu}=0.
$$
That is, $\mathbb{R}\ni t\mapsto  e^{\mu t}\in\mathbb{R}$ 
is a solution of the equation 
$z'(t)=-(a+\varepsilon e^{a})z(t)+bh'(0)z(t-1)$. 
 
From $y_t\gg 0$, $t\ge 0$, it follows that $\ds\frac{d}{dt}(e^{at}y(t))
=e^{at}\beta(t)y(t-1)>0$ for all $t>0$, and 
$e^{a(t-1)}y(t-1)< e^{at}y(t)$ for all $t>1$.  
If $T>0$ is so large that $\beta(t)>bh'(0)-\varepsilon$ for all 
$t\ge T$, then for all $t\ge T+1$
$$
y'(t)\ge -ay(t)+[bh'(0)-\varepsilon]y(t-1)
\ge -(a+\varepsilon e^{a})y(t)+bh'(0)y(t-1).
$$

There is a $\delta>0$ such that $\delta e^{\mu t}<y(t)$ for all 
$t\in [T+1,T+2]$. Then the function $v(t)=y(t)-\delta e^{\mu t}$, $t\in[T+1,\infty)$, statisfies $v_{T+2}\gg 0$ and 
$$
v'(t)\ge  -(a+\varepsilon e^{a})v(t)+bh'(0)v(t-1) \text{ for all }t\ge T+2. 
$$
A standard differential inequality argument shows $v(t)>0$ for all 
$t\ge T+1$. 
Consequently, $y(t)>\delta e^{\mu t}$, $t\ge T+1$, a contradiction  
to $\lim_{t\to\infty} y(t)=0$. 

The proof of (ii) is similar. 
\end{proof}	
	 
The next result is analogous to that of Proposition 5.1 in \cite{KWW}. 
	 
	\begin{proposition}\label{prop:P}
		If $v:\mathbb{R}\to\mathbb{R}$ is a solution of \eqref{linvar-equ}, then $ v_0\in P$ is equivalent to 
$$
v(t)\ge 0 \text{ for all } t\le 0 \ \text{ or } \ 
v(t)\le 0 \text{ for all } t\le 0. 
$$ 
	\end{proposition}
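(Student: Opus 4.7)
The equivalence has a straightforward forward direction and a more involved converse. If $v_0 = \alpha\chi_0\in P$ for some $\alpha\in\R$, then $v(t)=\alpha e^{\lambda_0 t}$ is a solution of \eqref{linvar-equ} on $\R$ with $v_0=\alpha\chi_0$; since $P$ is one-dimensional and fully invariant (forward and backward) under the linear flow, this is the unique entire solution trajectory through $\alpha\chi_0$. Hence $v(t)=\alpha e^{\lambda_0 t}$ for every $t\in\R$, which has constant sign.

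For the converse, suppose $v(t)\geq 0$ for all $t\leq 0$ (the case $v(t)\leq 0$ follows by replacing $v$ with $-v$). The plan has two stages. First, I would invoke the discrete Mallet--Paret--Sell Lyapunov functional $V$ for positive-feedback scalar delay equations, which applies to \eqref{linvar-equ} since $bh'(0)>0$ and which is non-negative even-integer-valued (or $+\infty$), non-increasing along solutions, and vanishes precisely on functions with no sign changes in $[-1,0]$. The hypothesis gives $V(v_t)=0$ for all $t\leq -1$, so forward monotonicity forces $V(v_t)=0$ for every $t\in\R$; together with continuity of $v$, this upgrades the sign-definiteness to $v\geq 0$ on all of $\R$. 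Second, decompose $v_0=c_0\chi_0+\phi^Q$ with $c_0\in\R$ and $\phi^Q=\mathrm{Pr}_Q v_0\in Q$, and set $u(t)=v(t)-c_0 e^{\lambda_0 t}$, an entire solution of \eqref{linvar-equ} with $u_0=\phi^Q\in Q$. Assuming $\phi^Q\neq 0$ for contradiction, I would use the spectral decomposition of the linear semigroup $\bigl(D_2\Phi(t,0)\bigr)_{t\geq 0}$ restricted to $Q$ to isolate a backward-dominant eigenvalue $\lambda_k=\mu_k+i\omega_k$ that is actually present in $u$ (with $\omega_k>0$ because the $Q$-spectrum is purely complex, and $\mu_k<\lambda_0$), obtaining an asymptotic expansion
$$
u(t) \;=\; A\,e^{\mu_k t}\cos(\omega_k t + \theta) \;+\; o\bigl(e^{\mu_k t}\bigr) \qquad (t\to-\infty),
$$
with $A>0$ and $\theta\in\R$. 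Since $\mu_k<\lambda_0$ implies $c_0 e^{\lambda_0 t}=o(e^{\mu_k t})$ as $t\to-\infty$, the ratio $v(t)/e^{\mu_k t}$ oscillates backward with amplitude $A$ and attains the negative value $-A$ along a suitable sequence $t_n\to-\infty$, contradicting $v\geq 0$. Thus $\phi^Q=0$ and $v_0\in P$.

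The main obstacle is justifying the backward asymptotic expansion of the entire $Q$-solution $u$. Concretely, one has to identify, within the spectral expansion of $u$ along the eigenvalues $\{\lambda_j,\bar\lambda_j\}_{j\geq 1}$, the one with smallest real part that actually appears, and verify that the spectral remainder is $o(e^{\mu_k t})$ as $t\to-\infty$. The standard route is via spectral projections onto the finite-dimensional generalized eigenspace of $\{\lambda_k,\bar\lambda_k\}$ and onto its complementary invariant subspace, combined with the resolvent and growth/decay estimates for linear autonomous delay equations (cf.~\cite{DVLGW}); the sign-ordering arguments built on such expansions are precisely the tools developed in \cite{KWW}, which this proposition explicitly parallels.
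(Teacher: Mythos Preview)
Your overall strategy---locate a backward-dominant complex eigenmode and derive a sign contradiction as $t\to-\infty$---is exactly the paper's, but your execution has a genuine gap at the point you yourself flag as ``the main obstacle.'' The existence of a backward-dominant eigenvalue for the entire $Q$-solution $u$ is \emph{not} automatic: an entire solution with $u_0\in Q\setminus\{0\}$ can in principle have nonzero spectral projections onto infinitely many pairs $\{\lambda_j,\bar\lambda_j\}$ with $\Re\lambda_j\to-\infty$, and then there is no ``smallest real part that actually appears,'' nor any asymptotic expansion of the form you write. General resolvent or growth/decay estimates for the forward semigroup do not by themselves rule this out, because backward in time the $\tilde Q_k$-components may grow without any universal rate.

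What closes this gap---and what your write-up never invokes---is the positivity hypothesis itself. From $v(t)\ge 0$ on $(-\infty,0]$ and $bh'(0)>0$ the equation gives $v'(t)\ge -a\,v(t)$, hence $v(t)\le e^{-at}v(0)$ and $\|v_t\|\le e^a e^{-at}\|v_0\|$ for all $t\le 0$. This a priori backward growth bound, combined with the forward contraction estimate $\|D_2\Phi(s,0)\chi\|\le K e^{\alpha s}\|\chi\|$ on the complementary space $\tilde Q_k$ (with $\alpha$ chosen so that $\Re\lambda_{k+1}<\alpha<-a$), forces $\mathrm{Pr}_{\tilde Q_k}v_t=0$ by letting the backward time parameter tend to $-\infty$. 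Thus $v_t$ lies in the finite-dimensional space $\tilde P_k$, and only then does the explicit finite sum $v(t)=a_0 e^{\lambda_0 t}+\sum_{j=1}^k a_j e^{\Re\lambda_j t}\cos(\Im\lambda_j(t+b_j))$ and the ensuing oscillation argument become available. Your decomposition $v=c_0 e^{\lambda_0 t}+u$ is fine once this reduction is in hand, but without it the argument does not go through.

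A minor point: your first stage, invoking the discrete Lyapunov functional to extend $v\ge 0$ from $(-\infty,0]$ to all of $\R$, is unnecessary. The contradiction is obtained as $t\to-\infty$, so sign information on $(0,\infty)$ is never used; and in any case the extension follows more simply from positivity of the semigroup applied to $v_0\ge 0$.
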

	
	\begin{proof}
	If $v_0\in P$ then $v(t)=ce^{\lambda_0 t}$, $t\in\R$, for some $c\in\R$, 
	and the nonnegativity or nonpositivity of $v$ follows. 
	
	To prove the other implication, without loss of generality we may assume  $v(t)\ge 0$ for all $t\le 0$. 
		From \eqref{linvar-equ} we have $v'(t)\ge - a v(t)$ for all $t\le 0$.  Integrating from $t$ to $0$ gives $v(t)\le e^{- a t} v(0)$ for $t\le 0$.  Hence 
		$$
		\|v_t\| \le  e^a e^{- a t} \|v_0\|
		\quad\text{for } t\le 0.
		$$
		
		Choose $\alpha<-a$ so that $\Re \lambda_{k+1}<\alpha<\Re \lambda_k$ for some $k\in\mathbb{N}$.  Let $\tilde{P}_k$ be the real generalized eigenspace for the eigenvalues $\lambda_0,\lambda_1,\overline{\lambda_1},\dots,\lambda_k,\overline{\lambda_k}$, and let $\tilde{Q}_k$ be that for the rest of the spectrum.  Then $C=\tilde{P}_k\oplus\tilde{Q}_k$, and there is a constant $K>1$ such that
		$$
		\|D_2\Phi(t,0) \chi\| \le  K e^{\alpha t} \|\chi\|
		\quad\text{for all } t\ge 0, \ \chi\in\tilde{Q}_k.
		$$
		For each $t\le 0$ and $\tau\le t$, we have 
		\begin{align*}	
		\|\mathrm{Pr}_{\tilde{Q}_k}v_t\|
		 &= 
		\|D_2\Phi(t-\tau,0) \mathrm{Pr}_{\tilde{Q}_k}v_\tau\|
		 \le 
		K e^{\alpha(t-\tau)}\|\mathrm{Pr}_{\tilde{Q}_k}v_\tau\|
		\\
		&\le 
		K e^{\alpha(t-\tau)}\|\mathrm{Pr}_{\tilde{Q}_k}\| \|v_\tau\|
\le 		
	K e^{\alpha(t-\tau)}\|\mathrm{Pr}_{\tilde{Q}_k}\| e^a e^{- a \tau} \|v_0\|
	\\
	&=
	K e^{-(\alpha+a)\tau}e^{\alpha t -a}
	\|\mathrm{Pr}_{\tilde{Q}_k}\|\|v_0\|	
		\end{align*}
since $\|v_\tau\|\le e^a e^{- a \tau} \|v_0\|$. Letting $\tau\to -\infty$ implies $\mathrm{Pr}_{\tilde{Q}_k}(v_t)=0$.  Hence $v_t\in \tilde{P}_k$ for $t\le 0$.  

It follows that $v$ can be written as
$$
v(t)
=  
a_0 e^{\lambda_0 t}
+ 
\sum_{j=1}^k 
a_j e^{\Re(\lambda_j) t}\cos\bigl(\Im(\lambda_j) (t+b_j)\bigr),
\quad
t\le 0,
$$
for some reals $a_0,a_j,b_j$.  Assume $j\in \{1,\dots,k\}$ is maximal so that $a_j\neq 0$. Then as $t\to -\infty$ the dominant term is $a_j e^{\Re(\lambda_j) t}\cos\bigl(\Im(\lambda_j) (t+b_j)$, which changes sign and violates $v(t)\ge0$.  Hence the only possibility is $v(t)=a_0 e^{\lambda_0 t}$ for $t\le 0$.  Thus $v_0\in P$.
\end{proof}
	
	We recall the definition of the 1-dimensional leading unstable set of the origin following  Chapter 4 of \cite{KWW}.   
	Let $\beta>1$ be such that $e^{\Re(\lambda_1)}<\beta<e^{\lambda_0}$.   By Theorem~I.4 from the Appendix of \cite{KWW}, there exist open convex neighborhoods $N_P\subset P$ and $N_Q\subset Q$ of $0$, and a $C^1$-map $w_{\text{loc}}: N_P \to Q$ with range in $N_Q$, and $w_{\text{loc}}(0)=0$,  $Dw_{\text{loc}}(0)=0$.  The local  $\beta$-unstable manifold of the fixed point $0$ of the time-1 map $\Phi(1,\cdot)$ is defined by 
	\begin{align*}
W_\beta\left(N_P+N_Q\right)=&
\{\phi\in N_P+N_Q: \exists  \text{ a trajectory } (\phi^k)_{k=-\infty}^0
\text{ of }
\Phi(1,\cdot)\text{ with }\phi^0=\phi,\\
& \beta^{-k} \phi^k\in N_P+N_Q \ 
 \text{ for all integers } k\le0,\ \text{ and }\lim_{k\to-\infty}\beta^{-k} \phi^k=0\},
	\end{align*}
	and it coincides with the graph
	$$
	W^u_{\text{loc}} (0)
	 = 
	\{
	 \chi + w_{\text{loc}}(\chi)
	:
	\chi\in N_P
	\}.
	$$

	 
	It is easy to show that each $\phi\in W^u_{\text{loc}} (0)$ uniquely determines a solution $u^\phi:\mathbb{R}\to\mathbb{R}$ of \eqref{eqn:Eh} with $\lim_{t\to -\infty}u^\phi(t)=0$. 
	Furthermore, there exists some $t_*\in\mathbb{R}$ such that $u_s^\phi\in W^u_{\text{loc}} (0)$ for all $s\le t_*$.  Define the forward extension of $W^u_{\text{loc}} (0)$ as 
	$$
	W^u(0)
	 = 
	\Phi\left([0,\infty)\times W^u_{\text{loc}} (0)\right).
	$$	
$W^u(0)$ is called the 1-dimensional leading unstable set of $0$. 

Then, for each $\phi\in W^u(0)$, there is a unique solution  $u^\phi:\mathbb{R}\to\mathbb{R}$ of equation \eqref{eqn:Eh} with $u_0^\phi=\phi$, and for this solution  $u_t^\phi\in W^u(0)$ 
for all $t\in\R$, and  $\lim_{t\to -\infty}u^\phi(t)=0$. 	In addition, 
we have $\hat{\xi_-}\ll \phi\ll \hat{\xi_+}$ for all $\phi\in W^u(0)$.
 
	 The next result shows that the structure of $W^u(0)$ 	is simple.  

	\begin{proposition}\label{prop1}
		There exist $\chi,\eta$ in $W^u(0)$ such that 
		\begin{enumerate}
		\item[(i)] the solution $u^\chi:\R\to\R$ is negative, strictly decreasing with negative derivative and $
		\lim_{t\to-\infty} u^\chi(t)= 0$, $\lim_{t\to\infty} u^\chi(t)
		 = 		\xi_-$,
		\item[(ii)] the solution $u^\eta:\R\to\R$ is positive, strictly increasing with positive derivative and $
		\lim_{t\to-\infty} u^\eta(t)= 0$, $\lim_{t\to\infty} u^\eta(t)
		 = 		\xi_+$,
		 \item[(iii)]
	\begin{equation}\label{Wstructure}
		W^u(0)=\{ u_t^\chi : t\in \mathbb{R}\} \cup \{ 0\} \cup \{ u_t^\eta : t\in \mathbb{R}\}.
		\end{equation}	
		\end{enumerate}
	\end{proposition}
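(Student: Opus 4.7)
The plan is to read off $\chi$ and $\eta$ directly from the graph parameterization of $W^u_{\text{loc}}(0)$ and then globalize their properties using the monotonicity machinery of this section. Since $P=\mathbb{R}\chi_0$ with $\chi_0(s)=e^{\lambda_0 s}>0$, the neighborhood $N_P$ is parameterized by $r\in(-\delta,\delta)$ via $r\mapsto r\chi_0$, so $W^u_{\text{loc}}(0)$ is the one-parameter family $r\chi_0+w_{\text{loc}}(r\chi_0)$. Because $w_{\text{loc}}(0)=0$ and $Dw_{\text{loc}}(0)=0$, the correction is $o(r)$ in $C$, and the element $\eta:=r_0\chi_0+w_{\text{loc}}(r_0\chi_0)$ satisfies $\eta\gg 0$ for every sufficiently small $r_0>0$; symmetrically $\chi:=-r_0\chi_0+w_{\text{loc}}(-r_0\chi_0)\ll 0$. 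The global solutions $u^\eta,u^\chi:\mathbb{R}\to\mathbb{R}$ of \eqref{eqn:Eh} are provided by the definition of $W^u(0)$, and $u^\eta_t,u^\chi_t\to 0$ as $t\to-\infty$ by construction.

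For (ii) (statement (i) is symmetric), I would argue positivity of $u^\eta$ on $(-\infty,0]$ from the fact that for $t\le t_*$ the trajectory $u^\eta_t$ is a point of $W^u_{\text{loc}}(0)$ parameterized by some $r(t)\in(-\delta,\delta)$ satisfying a scalar reduction of \eqref{eqn:Eh} with linearization $r'=\lambda_0 r$; since $r(t)\to 0+$ as $t\to-\infty$, backward uniqueness for this scalar ODE prevents $r(t)$ from reaching $0$, hence $r(t)>0$ and $u^\eta_t\gg 0$ throughout. For $t\ge 0$, monotonicity \eqref{monotone1} applied to $\eta\gg 0$ and the stationary solution $0$ gives $u^\eta_t\gg 0$. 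The same scalar reduction, strictly increasing on $(0,\delta)$, yields $u^\eta_{t_1}<u^\eta_{t_2}$ for $t_1<t_2$ inside $W^u_{\text{loc}}(0)$, and \eqref{monotone1} propagates the strict ordering forward. To upgrade to $(u^\eta)'(t)>0$ for all $t$, the leading behavior $u^\eta(t)\sim r_0 e^{\lambda_0 t}$ as $t\to-\infty$ makes $(u^\eta)'$ positive for sufficiently negative $t$, and if $t_1$ were the first zero of $(u^\eta)'$, differentiating \eqref{eqn:Eh} would give
$$
(u^\eta)''(t_1)=b\,h'\!\left(u^\eta(t_1-1)\right)(u^\eta)'(t_1-1)>0
$$
by \textup{(H)}, contradicting the elementary fact that a $C^1$ function positive on $(-\infty,t_1)$ and vanishing at $t_1$ must have $(u^\eta)''(t_1)\le 0$. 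Finally, Proposition~\ref{prop:conv}(i) applied with the stationary solution $0\in S$ and $\psi=\eta\gg 0$ yields $u^\eta(t)\to\xi_+$ as $t\to\infty$.

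For (iii), any $\phi\in W^u(0)\setminus\{0\}$ has a backward trajectory that eventually enters $W^u_{\text{loc}}(0)$, where by the argument above it is parameterized by a scalar $r(\cdot)$ of constant nonzero sign. If the sign is positive, I would match $\phi$ with the $\eta$-orbit by showing that any two positive trajectories of the scalar reduction are time-shifts of one another, producing a unique $\tau_\phi$ with $\phi=u^\eta_{\tau_\phi}$; the negative case is symmetric and produces $\phi=u^\chi_{\tau_\phi}$, so \eqref{Wstructure} follows. I expect the only genuinely delicate step to be this uniqueness-up-to-time-shift on each half of $W^u_{\text{loc}}(0)$, which is what really encodes the one-dimensionality; I would discharge it by reducing the semiflow on the $C^1$-graph $W^u_{\text{loc}}(0)$ to a $C^1$ scalar flow on $(-\delta,\delta)$ with $0$ as a hyperbolic unstable fixed point and invoking the standard phase-line picture for scalar ODEs near such a fixed point.
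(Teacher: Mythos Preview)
Your argument is correct and follows the same overall blueprint as the paper: parameterize $W^u_{\text{loc}}(0)$ over $N_P$, use $Dw_{\text{loc}}(0)=0$ to get $\eta\gg 0\gg\chi$, and apply Proposition~\ref{prop:conv} with $0\in S$ for the limits at $+\infty$. Two technical steps are handled differently, however. For the strict ordering $u^\eta_{t_1}\ll u^\eta_{t_2}$ and for part~(iii), you reduce the semiflow on the graph to a scalar flow on $(-\delta,\delta)$ and invoke phase-line uniqueness; the paper works directly in $C$, proving via the quantitative bound $\|Dw_{\text{loc}}(\delta\chi_0)\|<\tfrac12 e^{-\lambda_0}$ that $\delta_1<\delta_2$ forces $\eta_1\ll\eta_2$, after which \eqref{monotone1} and forward uniqueness of the DDE finish the job without any scalar ODE ever being written down. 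For $(u^\eta)'>0$, you differentiate \eqref{eqn:Eh} once more and argue by contradiction at a hypothetical first zero of the derivative; the paper instead observes that $(u^\eta)'$ satisfies the variational equation \eqref{linvar} along $u^\eta$ and applies the linear monotonicity \eqref{monotone2} to propagate $0\ll(u^\eta_t)'$ forward from large negative $t$ (where it is read off from the graph form using $\chi_0'=\lambda_0\chi_0$). Your route is a bit more elementary in spirit but requires checking that the restricted semiflow is genuinely a $C^1$ scalar flow in $t$ and that $u^\eta$ is $C^2$; the paper's route stays entirely within the monotone-semiflow framework \eqref{monotone1}--\eqref{monotone2} already in place.
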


	\begin{proof}
		Choose $\varepsilon_1>0$ so small that  
		$$
		\delta \chi_0 + w_{\text{loc}}(\delta \chi_0)
		 \in W^u_{\text{loc}}(0)
		\quad\text{and}\quad
		\bigl\|D w_{\text{loc}}(\delta \chi_0)\bigr\|
		 < 
		\tfrac12 e^{-\lambda_0}\text{ for }|\delta|<\varepsilon_1.
		$$
Let $\delta_1,\delta_2$ be given with $-\varepsilon_1<\delta_1<\delta_2<\varepsilon_1$, 
and set 
		$$
		\eta_1  = \delta_1 \chi_0  +  w_{\text{loc}}(\delta_1 \chi_0),
		\qquad
		\eta_2  = \delta_2 \chi_0  +  w_{\text{loc}}(\delta_2 \chi_0).
		$$		
Then 
$$
\eta_2-\eta_1
=(\delta_2-\delta_1)\chi_0	+
\int_0^1 Dw_{\text{loc}}\left([s\delta_2+(1-s)\delta_1]\chi_0\right)
(\delta_2-\delta_1)\chi_0  ds, 
$$		
and $\eta_1 \ll \eta_2$ follows since the norm of the integral is less 
than $\frac{1}{2}(\delta_2-\delta_1)e^{-\lambda_0}\|\chi_0\|$, and 
$(\delta_2-\delta_1)\min_{s\in[-1,0]}\chi_0(s)\ge (\delta_2-\delta_1)e^{-\lambda_0}$, $\|\chi_0\|=1$. 
In particular, $-\varepsilon_1<\delta_1<0<\delta_2<\varepsilon_1$	
implies $\eta_1 \ll 0 \ll \eta_2$. 

Fix $\delta\in(0,\varepsilon_1)$, set $\eta=\delta\chi_0+w_{\text{loc}}(\delta\chi_0)$, and consider the solution $u:\R\to\R$ of \eqref{eqn:Eh}
with $u_0=\eta$. 

Let $s,t$ be given with $s\ne t$. Clearly, by the facts 
 $0\ll \eta$ and $u_\tau\to 0$ as $\tau\to-\infty$,  
 $u$ cannot be periodic. Therefore, $u_s\ne u_t$.  
For sufficiently large $T>0$, 
$$
u_{s-T},u_{t-T}\in \{\delta \chi_0 + w_{\text{loc}}(\delta \chi_0):|\delta|<\varepsilon_1\}.
$$
Then there exist $\delta_1,\delta_2$ in $(-\varepsilon_1, \varepsilon_1)$
such that $\delta_1\ne \delta_2$ and 
$$
		u_{s-T}  = \delta_1 \chi_0  +  w_{\text{loc}}(\delta_1 \chi_0),
		\qquad
		u_{t-T} = \delta_2 \chi_0  +  w_{\text{loc}}(\delta_2 \chi_0).
		$$	

Then, by the above reasonings, either 
$u_{s-T}\ll u_{t-T}$ or  $u_{t-T}\ll u_{s-T}$. 
Hence, \eqref{monotone1} implies that the same strict ordering is preserved forward in time, either $u_s\ll u_t$ or $u_t\ll u_s$ 
follows. 
Consequently, $\R\ni t\mapsto u(t)\in\R$ is injective. 
The injectivity, $\lim_{t\to-\infty}u(t)=0$, and $u(0)=\eta(0)>0$ combined 
implies that $u$ is strictly increasing. 
By $0\in S$ and $0\ll\eta$, Proposition \ref{prop:conv} gives $\lim_{t\to\infty}u(t)=\xi_+$. 
Statement (i) is shown analogously. 

It is obvious that
$$
\{ u_t^\chi : t\in \mathbb{R}\} \cup \{ 0\} \cup \{ u_t^\eta : t\in \mathbb{R}\} \subset W^u(0).
$$

By the choice of $\chi,\eta$, there exists $\varepsilon_2\in(0,\varepsilon_1)$ so that 
$$
\{\delta \chi_0 + w_{\text{loc}}(\delta \chi_0):|\delta|<\varepsilon_2\}
\subset
\{ u_t^\chi : t\in \mathbb{R}\} \cup \{ 0\} \cup \{ u_t^\eta : t\in \mathbb{R}\}.
$$

If $\psi\in W^u(0)$ then for the unique solution $u^\psi:\R\to\R$ with 
$u_0^\psi=\psi$ there is a $t<0$ such that 
$$ 
u_t^\psi\in  \{\delta \chi_0 + w_{\text{loc}}(\delta \chi_0):|\delta|<\varepsilon_2\}.
$$
Then $u_t^\psi=\delta \chi_0 + w_{\text{loc}}(\delta \chi_0)$ for some 
$\delta\in (-\varepsilon_2,\varepsilon_2)$, and hence 
$\psi$ is a segment of $u^\eta$, or $u^\chi$, or $0$. This completes the proof of (iii).  

As $w_{\text{loc}}$ is $C^1$-smooth and $Dw_{\text{loc}}(0)=0$, for all sufficiently small $\delta>0$, the function $\delta\chi_0+w_{\text{loc}}(\delta\chi_0)\in C$ is $C^1$-smooth with positive derivative, that is,  
$$
\left[\delta\chi_0+w_{\text{loc}}(\delta\chi_0)\right]'(s) 
=
\left[
\delta \lambda_0 \chi_0 +Dw_{\text{loc}}(\delta\chi_0)\left(\delta \lambda_0\chi_0\right) \right](s)>0
$$
for all $s\in[-1,0]$ since $\chi_0'=\lambda_0\chi_0$. 
Then, for all large negative $t$, we find $0\ll (u_t^\eta)'$, and as 
$(u^\eta)'$ satisfies the linear variational equation \eqref{linvar}  along 
along $u^\eta$, \eqref{monotone2} implies  $0\ll (u_\tau^\eta)'$ for all 
$\tau\ge t$. As $t$ was arbitrarily large negative time, the derivative of 
 $u^\eta$ is positive for all $t\in\R$. 
The proof of $(u^\chi)'<0$ is analogous. 
\end{proof}
	
Clearly, we may assume
$$
W^{u}_{\text{loc}}(0)	= W^{u,-}_{\text{loc}}(0)\cup \{0\}\cup W^{u,+}_{\text{loc}}(0)
$$
where
$$
W^{u,+}_{\text{loc}}(0)	=\{u^\eta_t:t<s\},\quad W^{u,-}_{\text{loc}}(0)	=\{u^\chi_t:t<s\}
$$
for some $s\in\R$, and this defines the decomposition \eqref{Wstructure}
with
\begin{align*}
W^{u,+}(0)=\Phi([0,\infty)\times W^{u,+}_{\text{loc}}(0)) & 
=\{u^\eta_t:t\in\R\}, 
\\ 
W^{u,-}(0)=\Phi([0,\infty)\times W^{u,-}_{\text{loc}}(0)) & 
=\{u^\chi_t:t\in\R\}. 
\end{align*}

Let $\kappa^-$ and $\kappa^+$ be fixed such that $ \xi_- < \kappa^- < 0< \kappa^+ < \xi_+$. 
Then there exist unique times $\tau^-$ and $\tau^+$ such that 
$$
u^\eta(\tau^+)  = \kappa^+
\quad\text{and}\quad
u^\chi(\tau^-) = \kappa^-.
$$

Define 
$$
u^+(t)  =  u^\eta\left(t + \tau^+\right),
\quad
u^-(t)  =  u^\chi\left(t + \tau^-\right) \text{ for }t\in\R .
$$
The functions $u^+$ and $u^-$ 
can be characterized by their nonnegative and nonpositive properties, respectively. 
The proof is motivated by Proposition~5.3 in \cite{KWW}.  

\begin{proposition}\label{prop:u=u+} Let $u:\mathbb{R}\to\mathbb{R}$ be a solution of equation \eqref{eqn:Eh}.
	\begin{itemize}
	\item[(i)]
	If  $u(t)\in [0,\kappa^+]$ for all $t\in (-\infty,0]$ and $u(0)=\kappa^+$, then $u(t)=u^+(t)$ for all $t\in \mathbb{R}$.
	\item[(ii)]
	If $u(t)\in [\kappa^-, 0]$ for all $t\in (-\infty,0]$ and $u(0)=\kappa^-$, then $u(t)=u^-(t)$ for all $t\in \mathbb{R}$.
	\end{itemize}
\end{proposition}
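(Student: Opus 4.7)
My plan is to prove (i); part (ii) is entirely symmetric, using $u^\chi$ and $\hat{\xi}_-$ in place of $u^\eta$ and $\hat{\xi}_+$, and invoking Proposition \ref{prop:conv}(ii) in the relevant step. The overall strategy is first to show that $u_t \to 0$ in $C$ as $t \to -\infty$, then to conclude from the local unstable manifold theorem that $u_t \in W^u(0)$ for every $t \in \R$, and finally to read off $u = u^+$ from the structural description of $W^u(0)$ in Proposition \ref{prop1}.

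First I would use the hypothesis $u(t) \in [0, \kappa^+]$ for $t \le 0$ to bound $u'$ uniformly on $(-\infty, 0]$ via \eqref{eqn:Eh}. Arzelà--Ascoli then shows that $\{u_t : t \le 0\} \subset C_{0, \kappa^+}$ is precompact in $C$, so the $\alpha$-limit set $\alpha(u_0)$ is nonempty, compact, invariant under $\Phi$, and contained in $C_{0, \kappa^+}$. Since $\kappa^+ < \xi_+$, the only stationary point of $\Phi$ inside $C_{0, \kappa^+}$ is $0$.

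The key step, and what I expect to be the main obstacle, is excluding nontrivial elements from $\alpha(u_0)$. My idea is the following: if some $\phi \in \alpha(u_0)$ satisfies $\phi > 0$, then applying the eventual strong monotonicity clause of \eqref{monotone1} to $0 < \phi$ yields $\psi := \Phi(2, \phi) \gg 0$, and invariance puts $\psi \in \alpha(u_0)$. Because $\xi_- < 0 \le \psi(s) \le \kappa^+ < \xi_+$, we have $\psi \in C_{\xi_-, \xi_+}$. Noting that the trivial solution satisfies $0 \in S$ and that $\psi \gg 0$, Proposition \ref{prop:conv}(i) forces $\Phi(t, \psi) \to \hat{\xi}_+$ as $t \to \infty$. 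But this contradicts the inclusion $\Phi(t, \psi) \in \alpha(u_0) \subset C_{0, \kappa^+}$ for all $t \ge 0$, since $\hat{\xi}_+ \notin C_{0, \kappa^+}$. Hence $\alpha(u_0) = \{0\}$, so $u_t \to 0$ in $C$ as $t \to -\infty$.

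With backward convergence to the hyperbolic equilibrium $0$ in hand, the local unstable manifold theorem (Theorem I.4 of the Appendix of \cite{KWW}) ensures $u_t \in W^u_{\text{loc}}(0)$ for $t$ sufficiently negative, and therefore $u_t \in W^u(0)$ for every $t \in \R$. Proposition \ref{prop1} then places $u_0$ on one of the three branches $\{u^\chi_s : s \in \R\}$, $\{0\}$, or $\{u^\eta_s : s \in \R\}$; the positivity $u(0) = \kappa^+ > 0$ rules out the first two, and the strict monotonicity of $u^\eta$ together with $u^\eta(\tau^+) = \kappa^+$ pins down $u_0 = u^\eta_{\tau^+} = u^+_0$. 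Combined with forward uniqueness for \eqref{eqn:Eh}, this yields $u(t) = u^\eta(t + \tau^+) = u^+(t)$ for every $t \in \R$.
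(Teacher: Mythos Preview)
Your Step 1 (showing $u_t\to 0$ as $t\to-\infty$ via the $\alpha$-limit set and Proposition \ref{prop:conv}) is correct and matches the paper's argument exactly.

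The genuine gap is in your next step. You write that ``with backward convergence to the hyperbolic equilibrium $0$ in hand, the local unstable manifold theorem (Theorem I.4 of the Appendix of \cite{KWW}) ensures $u_t\in W^u_{\text{loc}}(0)$ for $t$ sufficiently negative.'' This does not follow. The manifold $W^u_{\text{loc}}(0)$ in this paper is not the full unstable manifold of $0$; it is the one-dimensional \emph{leading} $\beta$-unstable manifold associated with the single real eigenvalue $\lambda_0$, where $\beta$ is chosen with $e^{\Re\lambda_1}<\beta<e^{\lambda_0}$. Nothing in hypothesis (H) forces $\Re\lambda_1<0$, so the full unstable manifold of $0$ may well be higher-dimensional, and there can be solutions converging backward to $0$ that are not on $W^u_{\text{loc}}(0)$. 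Membership in $W^u_{\text{loc}}(0)=W_\beta(N_P+N_Q)$ requires the rate condition that $\beta^{-k}u_{t+k}$ stay bounded (and tend to $0$) as $k\to-\infty$, i.e.\ that $u_t$ decays at least like $\beta^t$ backward in time. Mere convergence $u_t\to 0$ gives no such rate.

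The paper fills this gap by a blow-up/compactness argument (its Steps 2 and 3): from the sign condition $u(t)\ge 0$ on $(-\infty,0]$ one first gets the crude bound $\|u_t\|\le e^a e^{-at}\|u_0\|$, then rescales $z^n(t)=u(t_n+t)/|u_{t_n}|$ along a sequence $t_n\to-\infty$, passes to a limit solution $z$ of the linear equation \eqref{linvar-equ} that is nonnegative on $(-\infty,0]$, and invokes Proposition \ref{prop:P} to conclude $z_0\in P$. This forces $\limsup_{t\to-\infty}|u_{t-1}|/|u_t|\le 1/\hat\beta<1/\beta$, which is exactly the rate estimate needed for Theorem I.4. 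It is precisely the nonnegativity hypothesis on $u$ (not just backward convergence) that drives this step, via Proposition \ref{prop:P}; your argument does not use that proposition at all.
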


\begin{proof} Let $u:\mathbb{R}\to\mathbb{R}$ be a solution of  \eqref{eqn:Eh} with $u(t)\in [0,\kappa^+]$ for all $t\in (-\infty,0]$ and $u(0)=\kappa^+$.
	
	\textit{Step 1.} 
	We first claim that $u(t)\to 0$ as $t\to -\infty$, i.e.\ $\alpha(u)=\{0\}$. For the $\alpha$-limit set we have
\begin{equation}\label{conv-to-zero}
	\alpha(u) \subset \bigl\{\chi\in C : \chi(s)\in [0,\kappa^+] \text{ for all } s\in[-1,0]\bigr\}.
\end{equation}
Suppose that there is some $\phi \in \alpha(u)$ with $0<\phi$.  Then, by the monotonicity of the semiflow, see \eqref{monotone1},  $0\ll\psi=\Phi(2,\phi)$.  Since $\alpha(u)$ is invariant, the segments of the solution $u^\psi$ lie in $\alpha(u)$.  Then
	it follows that $u^\psi(t)\in[0,\kappa^+]$ for all $t\in\mathbb{R}$.  
	On the other hand, by Proposition \ref{prop:conv}, 
	we have $u^\psi(t)\to\xi_+$ as $t\to\infty$, so eventually $u^\psi(t)>\kappa^+$, a contradiction.  
	Hence $u(t)\to 0$ as $t\to -\infty$.
	
	\textit{Step 2.} 
	It is sufficient  to show that $u_t\in W^u_{\mathrm{\text{loc}}}(0)$
for some large negative $t$.   Recall the definition of $W_{\mathrm{\text{loc}}}^u(0)$, along with the choice of $\beta>1$ satisfying $e^{\mathrm{Re}(\lambda_1)}<\beta< e^{\lambda_0}$.  Fix any $\hat{\beta} \in (\beta, e^{\lambda_0})$. By Theorem~I.1 in the Appendix of \cite{KWW}, there exists a norm $|\cdot|$ on $C$ (equivalent to $\|\cdot\|$) such that
	$$
	\bigl|D_2 \Phi(1,0) \phi\bigr| \ge \hat{\beta} \bigl|\phi\bigr|
	\quad
	\text{for all } \phi \in P.
	$$
	
	We claim that
	\begin{equation}\label{1-beta}
		\gamma 
		 = 
		\limsup_{t\to -\infty}
		 \frac{|u_{t-1}|}{|u_t|}
		 < \frac{1}{\beta}.
	\end{equation}
	
	From the positive-feedback property in (H), for $t\le0$ we have $u'(t)\ge - a u(t)$. Integration from $t_1$ to $t_2$ with $-\infty<t_1<t_2\le 0$ gives
	$$
	u(t_1) \le  e^{- a (t_1-t_2)} u(t_2).
	$$
	In particular, if $t_1\in[t-2,t-1]$ and $t_2\in[t-1,t]$ are chosen so that $u(t_1)=\|u_{t-1}\|$ and $u(t_2)=\|u_t\|$, then
	$$
	\|u_{t-1}\| = u(t_1)
	 \le 
	e^{- a (t_1-t_2)} u(t_2)
	 = 
	e^{- a (t_1-t_2)} \|u_t\|
	 \le 
	e^{2a} \|u_t\|.
	$$
	This shows $\gamma<\infty$.  Next, choose a sequence $t_n\to -\infty$ such that $\tfrac{|u_{t_n-1}|}{|u_{t_n}|}\to\gamma$.  
	
	For each $n$, define $z^n:\mathbb{R}\to\mathbb{R}$ by
	$$
	z^n(t)  =  \frac{u(t_n + t)}{|u_{t_n}|} \quad (t\in\R).
	$$
	We have $\bigl|z_0^n\bigr| = 1$, and $z^n(t)\ge 0$ for $t\le 0$. Each $z^n$ solves
	$$
	(z^n)'(t)  =  - a z^n(t) + b^n(t) z^n(t-1), 
	\quad
	t\le 0,
	$$
	where 
	$$
	b^n(t)
	 = 
	\int_{0}^{1} b h' \left(s u(t+t_n-1)\right) ds.
	$$
	
	Since $u_t\to 0$ as $t\to -\infty$, we get $b^n(t)\to b h'(0)$ uniformly on $(-\infty,0]$ as $n\to\infty$.  Also, from $z^n(t)\ge0$ and $b^n(t)>0$ for $t\le 0$, it follows that $(z^n)'(t)\ge - a z^n(t)$; integration yields $z^n(t)\le C e^{- a t}$ for some constant $C>0$.  
	
Applying the Arzelà–Ascoli theorem (see also Lemma~VI.4 in \cite{KWW}), there is a subsequence $(z^{n_k})$ converging to a continuously differentiable function $z:(-\infty,0]\to\mathbb{R}$ satisfying
	$$
	{z}'(t) = - a z(t)  + b  h'(0) z(t-1),
	\quad 
	z(t)\ge0,\quad
	z_0\neq 0,\quad
	|z_0|=1,\quad
	|z_{-1}| = \gamma.
	$$
	
	By  Proposition~\ref{prop:P}, $z_0\in P\setminus\{0\}.$ Then 
	$$
	1  =  |z_0|=
	\bigl|D_2\Phi(1,0) z_{-1}\bigr|
	 \ge 
	\hat{\beta} \bigl|z_{-1}\bigr|
	 = 
	\hat{\beta} \gamma,
	$$
	so $\gamma \le \tfrac{1}{\hat{\beta}}<\tfrac{1}{\beta}$.  Thus \eqref{1-beta} holds.  
	
	\textit{Step 3.}
	From \eqref{1-beta} we finally deduce that $u_t\in W_{\mathrm{\text{loc}}}$ for some large negative $t$.  
	Fix an  $\varepsilon>0$ so that $\psi\in C$, $|\psi|<\varepsilon$ imply $\psi\in N_P + N_Q$. By Step~1 and \eqref{1-beta}, choose $t<0$ such that 
	$$
	|u_s| < \varepsilon
	\quad\text{and}\quad
	\frac{|u_{s-1}|}{|u_s|} < \frac{1}{\beta}
	\quad
	\text{for all }s\le t.
	$$
	Set $\phi^k=u_{t+k}$ for $k\in -\mathbb{N}_0$.  Then 
	$\phi^k\in N_P + N_Q$ for all $k$, and an inductive argument shows that 
	$$
	\beta^{-k} \bigl|\phi^k\bigr|
	 < \varepsilon
	\quad
	\text{for all }k\in -\mathbb{N}_0.
	$$
	
By Theorem~I.4 in \cite{KWW}, $\lim_{k\to-\infty}\beta^{-k}\phi^k=0$, hence $\phi^0=u_t\in W^u_{\text{loc}}(0)$.
	
	The proof of (ii) is analogous. 
\end{proof}

The next result shows a strong stability property of the unstable set $W^u(0)$ under small perturbations of the right hand side of equation \eqref{eqn:Eh}.

\begin{proposition}\label{un-to-u+}
	Suppose $a,b,h$ satisfy \textup{(H)}.  Let $\left(a_n\right)_{n=1}^\infty$ and $\left(b_n\right)_{n=1}^\infty$ be sequences of positive real numbers such that $a_n\to a$ and $b_n\to b$ as $n\to\infty$.  Let $\left(h_n\right)_{n=1}^\infty$ be a sequence of continuously differentiable functions from  $\mathbb{R}$ to $\mathbb{R}$ such that
	\begin{equation}\label{h-hn}
		\|h_n-h\|_{[\kappa^-,\kappa^+]} + \|h_n'-h'\|_{[\kappa^-,\kappa^+]}
		\to 
		0
		\quad
		\text{as }n\to\infty.
	\end{equation}
	\begin{enumerate}
		\item[(i)]
		For each $n\in\mathbb{N}$, let $u^n:(-\infty,0]\to\mathbb{R}$ be a solution of 
		\begin{equation}\label{eqn:Eh_n}
			\tag{$E_{h,n}$}
			u'(t) = - a_n u(t) + b_n h_n\left(u(t-1)\right),
		\end{equation}
		in the interval $(-\infty,0]$ 	satisfying 
		$$
		\kappa^-  \le  u^n(t) \le u^n(0) = \kappa^+
		\quad\text{for all }t\le 0,
		\quad
		\text{and}
		\quad
		\inf_{t\in(-\infty,0]}u^n(t) \to 0
		\quad
		\text{as }n\to\infty.
		$$	
		Then 
		$u^n(t) \to u^+(t)$ as $n\to\infty$
		uniformly on compact subsets of $(-\infty,0]$.  
		\item[(ii)]	
		If the solution $u^n$ of \eqref{eqn:Eh_n} satisfies 
		$	\kappa^-  = u^n(0) \le  u^n(t) \le \kappa^+$ for all $t\le 0$ and $n\in\N$, and
		$\sup_{t\in(-\infty,0]}u^n(t) \to 0$ as $n\to\infty$, 
		then 
		$u^n(t) \to u^-(t)$ as $n\to\infty$
		uniformly on compact subsets of $(-\infty,0]$.  	
	\end{enumerate}	
\end{proposition}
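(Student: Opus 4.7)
The plan is to pass to the limit by a compactness argument, identify the resulting limit as a global solution of the unperturbed equation \eqref{eqn:Eh} that satisfies the hypotheses of Proposition~\ref{prop:u=u+}(i), and then invoke a standard subsequence principle to upgrade subsequential convergence to full-sequence convergence. Part (ii) will follow from the symmetric argument using Proposition~\ref{prop:u=u+}(ii).

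First I would establish uniform bounds and equicontinuity for the family $(u^n)$ on every compact subset of $(-\infty,0]$. Each $u^n$ takes values in $[\kappa^-,\kappa^+]$ by hypothesis; together with $a_n \to a$, $b_n \to b$, and the uniform convergence $h_n \to h$ on $[\kappa^-,\kappa^+]$ (which in particular provides a uniform bound on $\|h_n\|_{[\kappa^-,\kappa^+]}$), the equation \eqref{eqn:Eh_n} yields a uniform bound on $|(u^n)'(t)|$ for $t \le 0$. An Arzelà-Ascoli argument combined with a diagonal procedure over an exhausting family of compact intervals then produces a subsequence $(u^{n_k})$ converging uniformly on compact subsets of $(-\infty,0]$ to some continuous function $u:(-\infty,0]\to\mathbb{R}$.

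Next I would show that $u$ extends to a solution of \eqref{eqn:Eh} on $\mathbb{R}$ satisfying the hypotheses of Proposition~\ref{prop:u=u+}(i). Passing to the limit $k \to \infty$ in the integral form
$$u^{n_k}(t) = e^{-a_{n_k}(t-\tau)} u^{n_k}(\tau) + b_{n_k} \int_\tau^t e^{-a_{n_k}(t-s)} h_{n_k}\bigl(u^{n_k}(s-1)\bigr)\,ds,$$
using uniform convergence of $u^{n_k}$ on compacts and of $h_{n_k}$ on $[\kappa^-,\kappa^+]$, shows that $u$ satisfies the corresponding integral equation and therefore solves \eqref{eqn:Eh} on $(-\infty,0]$. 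Clearly $u(0) = \kappa^+$ and $u(t) \le \kappa^+$ for $t \le 0$. For the crucial inequality $u(t)\ge 0$, fix $t\le 0$ and observe
$$u(t) = \lim_{k\to\infty} u^{n_k}(t) \ge \liminf_{k\to\infty} \inf_{s\le 0} u^{n_k}(s) = 0,$$
using the hypothesis $\inf_{s\le 0}u^n(s)\to 0$. Extending $u$ forward in time by the semiflow of \eqref{eqn:Eh} associated with initial datum $u|_{[-1,0]}$ gives a solution $\tilde u : \mathbb{R}\to\mathbb{R}$ with $\tilde u|_{(-\infty,0]} = u$, so Proposition~\ref{prop:u=u+}(i) forces $\tilde u = u^+$.

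\textbf{Main obstacle and conclusion.} The most subtle point is verifying the nonnegativity $u\ge 0$ for the limit: the hypothesis only controls the sequence of infima, not the pointwise values of any individual $u^n$, so the key observation is that pointwise values are bounded below by the infimum, after which one takes the $\liminf$. Once the limit of an arbitrary subsequence is identified as $u^+$, the full convergence $u^n \to u^+$ uniformly on compact subsets of $(-\infty,0]$ follows from the standard fact that if every subsequence of a sequence has a further subsequence converging to the same limit, then the original sequence converges to that limit. Part (ii) is proved verbatim with $u^+$ replaced by $u^-$: the hypotheses $u^n(t)\in[\kappa^-,0]$, $u^n(0)=\kappa^-$, and $\sup_{s\le 0} u^n(s)\to 0$ ensure that any subsequential limit $u$ satisfies $u(t)\in[\kappa^-,0]$ and $u(0)=\kappa^-$, whence Proposition~\ref{prop:u=u+}(ii) identifies it as $u^-$.
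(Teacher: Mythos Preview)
Your proposal is correct and follows essentially the same approach as the paper: uniform bounds and equicontinuity, Arzel\`a--Ascoli with diagonalization, passage to the limit in the integral equation, identification via Proposition~\ref{prop:u=u+}, and the subsequence principle. You are in fact slightly more explicit than the paper about why the limit function satisfies $u(t)\ge 0$ (using $u^{n_k}(t)\ge \inf_{s\le 0}u^{n_k}(s)\to 0$) and about the forward extension needed to invoke Proposition~\ref{prop:u=u+} as stated.
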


\begin{proof} We show  statement (i), the proof of (ii) is analogous. 

	First, note that there exist positive constants $A,B,K,H>0$ such that for all $n\in\mathbb{N}$
	$$
	a_n < A,
	\quad
	b_n < B,
	\quad
	|u^n(t)| < K\text{ for all } t\le 0,
	\quad
	\sup_{\xi\in[\kappa^-, \kappa^+]}\left(  |h_n(\xi)| +|h_n'(\xi)|\right) < H.
	$$	
	
	From \eqref{eqn:Eh_n} we get

	Hence, by the Arzelà–Ascoli theorem and the diagonalization process, the sequence $(u^n)$ has a  subsequence $(u^{n_k})$ converging to a continuous function $u:(-\infty,0]\to \R$ so that the convergence is uniform 
	on every compact interval in $(-\infty,0]$.
The limit function $u$ satisfies  $u(t)\in[0,\kappa^+]$ for $t\le 0$, and $u(0)=\kappa^+$. 
 
For the functions $u^{n_k}$, the integral equations 
	\begin{equation*}
		u^{n_k}(t)=e^{-a_{n_k}(t-\tau)}u^{n_k}(\tau)+\int_{\tau}^t e^{-a_{n_k}(t-s)}
		b_{n_k}h_{n_k}\left(u^{n_k}(s-1)\right)  ds,
		\quad
		-\infty < \tau < t \le 0,
	\end{equation*}
hold. For fixed $\tau,t$ with $-\infty < \tau < t \le 0$, letting $k\to\infty$ in the integral equation, using the assumptions on 
$\left(a_n\right)$, $\left(b_n\right)$, $\left(h_n\right)$, and that 
 $(u^{n_k})$ converges to  $u$ uniformly on $[\tau,0]$, we obtain 
$$
u(t)=e^{-a(t-\tau)}u(\tau)+\int_{\tau}^t e^{-a(t-s)}
		bh\left(u(s-1)\right)  ds.
$$

As $\tau, t$ were arbitrary it follows that $u$ satisfies equation \eqref{eqn:Eh} for all $t\le 0$.
Then necessarily $u(t)=u^+(t)$, $t\le 0$, by Proposition \ref{prop:u=u+}. 

Considering an arbitrary subsequence of $(u^n)$, the above process gives 
a subsequnce converging to the same $u^+$. Consequently, 
	$u^n(t) \to u^+(t)$ as $n\to\infty$
	uniformly on compact subsets of $(-\infty,0]$.  
	This completes the proof. 
\end{proof}

		Assume that $p: \mathbb{R}\to\mathbb{R}$ is a non-trivial periodic solution of \eqref{eqn:Eh} with minimal period $\omega>0$ and $p(\mathbb{R})\subset (\xi_-,\xi_+)$.  
		The linearized period map $M  = 		D_2 \Phi(\omega, p_0)$ is called the monodromy operator. The operator $M^k$ is compact on $C$ 
		for a   $k\in\N$ with $k \omega > 1$. 
  Consequently, all nonzero points $\lambda$ in the spectrum $\sigma(M)$ 
  belong to the point spectrum, and they are called Floquet multipliers. 
 For $\lambda \in \sigma(M)\setminus\{0\}$, let 
	$$
	G_\lambda  =  \bigcup_{k=0}^\infty \ker \Bigl(M - \lambda I\Bigr)^k
	$$
	be the (complex) generalized eigenspace for $M$.  For any $\kappa>0$, define
	$$
	G_\kappa
	 = \mathrm{Re}
	\bigoplus_{\substack{|\lambda|=\kappa \\ \lambda\in\sigma(M)}} 
	G_\lambda,
	$$
	which is the real part of the spans of the generalized eigenspaces of eigenvalues with absolute value $\kappa$. 
	  As $p$ is non-trivial, $p_0'\ne 0$, we have $Mp_0'=p_0'$, that is 1 is a Floquet multiplier. 
	  
	 By \cite{MPS} or \cite{KWW}, and the  monotonicity hypothesis (e.g.\ $h'>0$ on $(\xi_-,\xi_+)$), there exist a Floquet multiplier $\lambda_u >1 $ and a $\psi^u\in C$ with $0\ll \psi^u$, $M\psi^u=\lambda_u\psi^u$. 
	  In addition, $\dim\left(G_{\lambda_u}\right) = 1$. i.e.\ the generalized eigenspace corresponding to $\lambda_u$ is one-dimensional, and	$\bigl|\lambda\bigr| < \lambda_u$ for all other Floquet multipliers. 
	Define
		$$
		C_{\lambda_u}
		 = 
		\{ c \psi^u : c\in\mathbb{R}\},
		$$
		and let $C_{<\lambda_u}$ denote the realified generalized eigenspace of $M$ associated with the nonempty compact spectral set $\{\lambda\in\sigma(M): |\lambda|<\lambda_u\}$.  Then $C = C_{\lambda_u} \oplus C_{<\lambda_u}$. 
 
		Choose $\beta\in(1,\lambda_u)$ such that 
		$$
		\beta
		 > 
		\max_{\lambda  \in \sigma(M) \setminus \{\lambda_u\}} 
		\bigl|\lambda\bigr|.
		$$
		By Theorem I.1 in \cite{KWW}, there exists a norm $\vert\cdot\vert$ equivalent to $\|\cdot\|$ on $C$ such that
		$$
		\bigl\vert  (M|_{C_{\lambda_u}})^{-1} \bigr\vert 
		 < 
		\frac{1}{\beta},
		\quad
		\text{and}
		\quad
		\bigl\vert  M|_{C_{<\lambda_u}} \bigr\vert 
		 < 
		\beta.
		$$

By Theorem~I.4 of \cite{KWW}, there exist open convex neighborhoods  
	$N_{\lambda_u}$ of $0$  in $C_{\lambda_u}$, $ N_{<\lambda_u} $ of $0$ in 
	$ C_{<\lambda_u}$, and 
	 a $C^1$-map $w^u: N_{\lambda_u} \to C_{<\lambda_u}$ with $w^u(N_{\lambda_u}) \subset N_{<\lambda_u}$, $w^u(0)=0$, and $Dw^u(0)=0$, and the 
	 shifted graph 
	$$
	W^u_{\text{loc}}(p_0,\Phi(\omega,\cdot))
	 = 
	\bigl\{
	p_0 + \chi + w^u(\chi)
	 : 
	\chi \in N_{\lambda_u}
	\bigr\}
	$$
of $w^u$ over $N_{\lambda_u}$	 is equal to the set  
\begin{align*}
W^u_\beta (p_0,\Phi(\omega,\cdot)) = &	\bigl\{
\phi \in p_0 + N_{\lambda_u} + N_{<\lambda_u}
: 
\exists \text{ a trajectory }(\phi_n)_{-\infty}^0 
\text{ of } \Phi(\omega,\cdot)\\
&\text{ with } \phi_0 = \phi,\ 
\beta^{-n} \left(\phi_n - p_0\right)\in N_{\lambda_u} + N_{<\lambda_u} \ 
 \forall n\in -\mathbb{N},\\
&\text{ and}\ 
\beta^{-n} \left(\phi_n - p_0\right)\to 0
\text{ as }n\to -\infty
\bigr\}.
\end{align*}
	
The unstable set of the fixed point $p_0$ of $\Phi(\omega,\cdot)$ is defined by
$$
W^u \left(p_0, \Phi(\omega,\cdot)\right)
 = 
\bigcup_{n\in\mathbb{N}} \Phi\left(n\omega, W^u_{\text{loc}} (p_0,\Phi(\omega,\cdot)) \right).
$$

The leading 2-dimensional unstable set of the periodic orbit
$
\mathcal{O}  =  \{ p_t : t \in \mathbb{R}\}
$
is given by
$$
W^u(\mathcal{O})
 = 
\Phi\left([0,\infty)\times W^u_{\text{loc}} (p_0,\Phi(\omega,\cdot)) \right).
$$

Choose $\delta_0>0$ so that $\{s\psi^u:|s|<\delta_0\}\subset N_{\lambda_u}$, and define 
$$
W^u_{\text{loc},\delta_0}(p_0,\Phi(\omega,\cdot))=\left\lbrace p_0 + s \psi^u + w^u\left(s \psi^u\right):
|s|<\delta_0 \right\rbrace.
$$ 

If $\phi\in W^u_\beta (p_0,\Phi(\omega,\cdot))$ and $(\phi_n)_{-\infty}^0$ 
is the trajectory  of  $\Phi(\omega,\cdot)$ in the definition of  
$W^u_\beta (p_0,\Phi(\omega,\cdot))$, then $\phi_n\in W^u_\beta (p_0,\Phi(\omega,\cdot))\supset W^u_{\text{loc}}(p_0,\Phi(\omega,\cdot))$, and $\phi_n\to p_0$ as $n\to -\infty$. 
Hence,  $\phi_n\in W^u_{\text{loc},\delta_0}$ follows for all large negative integers $n$. 
Then
$$
\bigcup_{n\in\mathbb{N}} \Phi\left(n\omega, W^u_{\text{loc},\delta_0}  \right)
=\bigcup_{n\in\mathbb{N}} \Phi\left(n\omega, W^u_{\text{loc}} (p_0,\Phi(\omega,\cdot)) \right),
$$
and 
$$
W^u(\mathcal{O})
 = 
 \Phi\left([0,\infty)\times W^u_{\text{loc},\delta_0} (p_0,\Phi(\omega,\cdot)) \right).
$$

By repeating the 
argument in the proof of Proposition \ref{prop1}, 
 one can choose   $\delta_0$ so small that 
\begin{equation}\label{order-wloc}
-\delta_0  \le s_1 < s_2 \le \delta_0 \text{ implies }
p_0 + s_1 \psi^u + w^u\left(s_1 \psi^u\right)
 \ll 
p_0 + s_2 \psi^u + w^u\left(s_2 \psi^u\right).
\end{equation}
Moreover, see also Proposition 12.1 in \cite{KWW},  there exist $\delta_1,\delta_2$ in $(0,\delta_0)$ and a strictly increasing function $j:[-\delta_1,\delta_2]\to [-\delta_0,\delta_0]$ such that 
$j(-\delta_1)=-\delta_0$, $j(\delta_2)=\delta_0$, $j(0)=0$, 
$|j(s)|>|s|$ for all $s\in [-\delta_1,\delta_2]\setminus\{0\}$, 
and 
\begin{equation}\label{order-j}
\Phi(\omega,p_0 + s \psi^u + w\left(s \psi^u\right))
=p_0 + j(s)\psi^u + w\left(j(s) \psi^u\right) 
\text{ for all }s\in[-\delta_1,\delta_2].
\end{equation}

Decompose $W^u_{\text{loc},\delta_0}(p_0,\Phi(\omega,\cdot))$ into three parts by
\begin{equation}\label{Wloc-decomp}
W^u_{\text{loc},\delta_0}(p_0,\Phi(\omega,\cdot))=W^{u,-}_{\text{loc},\delta_0}(p_0,\Phi(\omega,\cdot))\cup \mathcal{O} 
\cup W^{u,+}_{\text{loc},\delta_0}(p_0,\Phi(\omega,\cdot))
\end{equation}
where 
\begin{align*}
& W^{u,-}_{\text{loc},\delta_0}(p_0,\Phi(\omega,\cdot))=\left\lbrace p_0 + s \psi^u + w^u\left(s \psi^u\right):
-\delta_0<s<0 \right\rbrace,\\
& W^{u,+}_{\text{loc},\delta_0}(p_0,\Phi(\omega,\cdot))=\left\lbrace p_0 + s \psi^u + w^u\left(s \psi^u\right):
0<s<\delta_0 \right\rbrace.
\end{align*}

From \eqref{order-wloc} one finds $\psi\ll p_0 \ll \phi$ for 
each $\psi\in W^{u,-}_{\text{loc},\delta_0}(p_0,\Phi(\omega,\cdot))$ and $\phi\in W^{u,+}_{\text{loc},\delta_0}(p_0,\Phi(\omega,\cdot))$.
 Then, by using $p_0\in S$ and Proposition \ref{prop:conv},  
 it follows that the set $W^{u,-}_{\text{loc},\delta_0}(p_0,\Phi(\omega,\cdot))\cup W^{u,+}_{\text{loc},\delta_0}(p_0,\Phi(\omega,\cdot))$
is disjoint from $S$, and  for any 
$$\psi\in W^{u,-}_{\text{loc},\delta_0}(p_0,\Phi(\omega,\cdot)),\quad \phi\in W^{u,+}_{\text{loc},\delta_0}(p_0,\Phi(\omega,\cdot))
$$ 
there are unique solutions $u^\psi,u^\phi:\mathbb{R}\to \mathbb{R}$ 
with $u^\psi_0=\psi$, $u^\phi_0=\phi$, and 
$u^\psi_t\to \hat{\xi}_-$, $u^\phi_t\to \hat{\xi}_+$ as $t\to\infty$. 
Moreover, by \eqref{monotone1} and \eqref{order-j}, 
 $$
 u^\psi_t\ll p_t \text{ for all } t\in\mathbb{R} \text{ and }\psi\in W^{u,-}_{\text{loc},\delta_0}(p_0,\Phi(\omega,\cdot)),
 $$
  $$
 u^\phi_t\gg p_t \text{ for all } t\in\mathbb{R} \text{ and }\phi\in W^{u,+}_{\text{loc},\delta_0}(p_0,\Phi(\omega,\cdot)).
 $$

Recall the constants $\kappa^+, \kappa^-$ with $\xi_-<\kappa^-<0<\kappa^+<\xi_+$. 
We can summarize  the obtained results as follows.

\begin{proposition}\label{W(O)} The leading 2-dimensional unstable set $W^u(\mathcal{O})$
of the periodic orbit $\mathcal{O}$ has the decomposition 
$$
W^u(\mathcal{O})
 = 
W^{u,-}(\mathcal{O})\cup  \mathcal{O} \cup W^{u,+}(\mathcal{O})
$$
where 
$$
W^{u,-}(\mathcal{O})
 = 
 \Phi\left([0,\infty)\times W^{u,-}_{\text{loc},\delta_0} (p_0,\Phi(\omega,\cdot)) \right), \quad 
W^{u,+}(\mathcal{O})
 = 
 \Phi\left([0,\infty)\times W^{u,+}_{\text{loc},\delta_0} (p_0,\Phi(\omega,\cdot)) \right).
$$
\begin{itemize}
\item[(i)]
For each $\psi\in W^{u,-}_{\text{loc},\delta_0}(p_0,\Phi(\omega,\cdot)) $ there is a unique solution 
$u^{-,\psi}:\R\to\R$ of equation \eqref{eqn:Eh} so that 
\begin{equation*}
\begin{array}{cc}
u^{-,\psi}(0)=\kappa^-,\ u^{-,\psi}(t)>\kappa^- \text{ for all }t<0,\\
u^{-,\psi}_t\ll p_t \text{ for all }t\in\mathbb{R}, \text{ and }
u^{-,\psi}_t\to \mathcal{O} \text{ as }t\to -\infty, \quad 
u^{-,\psi}_t\to \hat{\xi}_- \text{ as }t\to \infty.
 \end{array}
\end{equation*}
\item[(ii)]
For each $\phi\in W^{u,+}_{\text{loc},\delta_0} (p_0,\Phi(\omega,\cdot))$ there is a unique solution 
$u^{+,\phi}:\R\to\R$ of equation \eqref{eqn:Eh} so that 
\begin{equation*}
\begin{array}{cc}
u^{+,\phi}(0)=\kappa^+,\ u^{+,\psi}(t)<\kappa^+ \text{ for all }t<0,\\
u^{+,\phi}_t\gg p_t \text{ for all }t\in\mathbb{R}, \text{ and }
u^{+,\phi}_t\to \mathcal{O} \text{ as }t\to -\infty, \quad 
u^{+,\phi}_t\to \hat{\xi}_+ \text{ as }t\to \infty.
 \end{array}
\end{equation*}
 \end{itemize}
\end{proposition}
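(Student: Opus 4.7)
The plan is to combine the properties established in the discussion immediately preceding the proposition with a canonical time-shift normalization. The main new ingredients to supply are the global decomposition, the backward convergence to $\mathcal{O}$, and the reparameterization of each orbit so that it crosses $\kappa^{\mp}$ at $t=0$.

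First I would derive the global decomposition. Starting from the identity $W^u(\mathcal{O}) = \Phi([0,\infty) \times W^u_{\text{loc},\delta_0}(p_0,\Phi(\omega,\cdot)))$ already given in the text above, I would distribute $\Phi$ over the local decomposition \eqref{Wloc-decomp}. The identity $\Phi([0,\infty) \times \{p_0\}) = \mathcal{O}$ is immediate from periodicity, and the definitions of $W^{u,\pm}(\mathcal{O})$ yield the other two pieces. That these three sets stay separated under forward iteration of the time-$\omega$ map follows from \eqref{order-j}: since $j$ is strictly increasing with $j(0)=0$, positive (respectively negative) $s$-parameters are preserved, and the analogous fact for the semiflow $\Phi$ on intervals of length less than $\omega$ follows from the strict ordering relations in \eqref{monotone1}.

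For statement (i), fix $\psi \in W^{u,-}_{\text{loc},\delta_0}(p_0,\Phi(\omega,\cdot))$. The existence of the unique solution $u^\psi : \mathbb{R} \to \mathbb{R}$ with $u^\psi_0 = \psi$, the strict ordering $u^\psi_t \ll p_t$ on all of $\mathbb{R}$, and the forward convergence $u^\psi_t \to \hat{\xi}_-$ are already established in the paragraph immediately preceding the proposition, using $p_0 \in S$ together with Proposition \ref{prop:conv}(ii). It remains to show $u^\psi_t \to \mathcal{O}$ as $t \to -\infty$. For this I would use the backward trajectory $(\phi_n)_{n \in -\mathbb{N}_0}$ with $\phi_0 = \psi$, $\Phi(\omega,\phi_{n-1}) = \phi_n$, and $\phi_n \to p_0$ coming from the definition of $W^u_\beta$; continuous dependence of $\Phi$ on initial data over the compact interval $[0,\omega]$ then gives $u^\psi_{n\omega + \tau} \to p_\tau$ uniformly in $\tau \in [0,\omega]$, which is exactly convergence of $u^\psi_t$ to $\mathcal{O}$.

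The final task is the time-shift normalization. Since $u^\psi(t) \to \xi_- < \kappa^-$ forward in time, $u^\psi$ must drop below $\kappa^-$; since $u^\psi_t \to \mathcal{O}$ backward in time and $\mathcal{O}$ lies strictly above $\kappa^-$ pointwise under the placement $\min_{\mathbb{R}} p > \kappa^-$, it follows that $u^\psi(t) > \kappa^-$ for all sufficiently negative $t$. I would then set $t_0 = \inf\{t \in \mathbb{R} : u^\psi(t) \le \kappa^-\}$ and define $u^{-,\psi}(t) := u^\psi(t + t_0)$; all the listed properties follow directly, and uniqueness comes from uniqueness of $u^\psi$ combined with the $t=0$ normalization. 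Statement (ii) is handled symmetrically via $p_0 \ll \phi$, $\psi^u \gg 0$, Proposition \ref{prop:conv}(i), and the first crossing of $\kappa^+$. The main obstacle is this reparameterization step: ensuring $u^\psi$ stays strictly above $\kappa^-$ on a left half-line requires the placement condition $\min_{\mathbb{R}} p > \kappa^-$, so that backward convergence to $\mathcal{O}$ together with compactness of $\mathcal{O}$ yields uniform separation from $\kappa^-$; everything else is routine assembly.
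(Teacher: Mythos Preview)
Your proposal is correct and follows the paper's approach: the proposition is explicitly introduced as a summary (``We can summarize the obtained results as follows''), so the paper offers no separate proof beyond the preceding discussion, and you have supplied precisely the details left implicit there --- the backward convergence $u^\psi_t\to\mathcal{O}$ via the trajectory $(\phi_n)$ and continuous dependence, and the time-shift normalization to hit $\kappa^\mp$ at $t=0$. Your observation that the normalization step tacitly uses $\min_{\mathbb{R}}p>\kappa^-$ (and $\max_{\mathbb{R}}p<\kappa^+$) is a fair point the paper does not address; it is harmless in the paper's only application (Section~\ref{sec6}, Step~3), where the periodic solution $r^n$ satisfies $|r^n|<1/n$ while $\kappa^\pm$ are fixed away from $0$.
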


\section{Leading unstable sets for \eqref{eqn:Eg}  and 
\eqref{eqn:Efn}}\label{sec4}

Throughout this section 
 assumptions $(C_g)(a)$ and $(C_{f_n})(a)(b)$ are assumed. 
The nonlinearities $g$ and $f_n$ of equations 
\eqref{eqn:Eg}  and \eqref{eqn:Efn} are monotone on some intervals 
containing $\xi_1$ and $\xi_{1,n}$, respectively. 
Then the results from Section \ref{sec3} can be used to construct 
some local invariant manifolds, and extensions of these local manifolds 
by the corresponding semiflows will give the unstable sets.  

Fix a $\kappa^+\in (0,1-\xi_1)$ and a $\kappa^-\in (-\xi_1,0)$.

\begin{proposition}\label{prop:W-Eg} 
There exist unique solutions $x^-,x^+:\mathbb{R}\to\mathbb{R}$ of equation \eqref{eqn:Eg} with the following properties. 
\begin{itemize}
\item[(i)]
There exists a unique $t_1>0$ such that $x^+(0)=\xi_1+\kappa^+< 1$, $x^+(t_1)=1$, 
$x^+$  strictly increases on  $(-\infty,t_1+1]$,   
$ (x^+)'(t)>0 $  on $(-\infty,t_1+1)$, 
$\lim_{t\to-\infty}x^+(t)=\xi_1$, $x^+(t_2)=1$, 
$x^+(t)>1$ for all $t\in(t_1,t_2)$, where 
$t_2=t_1+1+(1/c)\ln x^+(t_1+1)$, and 
$x^+(t)=x^+(t_1+1)e^{-c(t-t_1-1)}$ for all $t\in [t_1+1,t_2+1]$, 
$x^+(\mathbb{R})\subset(0,d/c)$.  
\item[(ii)]
Function $x^-$ strictly decreases, $x^-(\mathbb{R})=(0,\xi_1)$, $x^-(0)=\kappa^-+\xi_1$, and 
$x^-(t)\to \xi_1$ as $t\to-\infty$, 
$x^-(t)\to 0$  as $t\to\infty$.
\item[(iii)]
The 1-dimensional leading unstable set of $\Gamma$ at $\hat\xi_1$ is given by
$$
W^u(\hat\xi_1)=W^{u,-}(\hat\xi_1)\cup\{\hat\xi_1\}\cup W^{u,+}(\hat\xi_1)
=\{x^-_t:t\in\mathbb{R}\} \cup\{\hat\xi_1\}\cup \{x^+_t:t\in\mathbb{R}\}. 
$$
\end{itemize}
\end{proposition}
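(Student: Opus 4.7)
The plan is to reduce the construction to the monotone-feedback theory of Section~\ref{sec3} by translating $u = x - \xi_1$, which converts \eqref{eqn:Eg} into
\begin{equation*}
u'(t) = -cu(t) + d\tilde{g}(u(t-1)), \qquad \tilde{g}(\eta) := g(\eta + \xi_1) - g(\xi_1).
\end{equation*}
From $(C_g)(a)$ one reads off $\tilde{g}(0) = 0$, $d\tilde{g}'(0) = dg'(\xi_1) > c$ (using $g'(\xi_1) > g(\xi_1)/\xi_1 = c/d$), $\tilde{g}(\eta)/\eta > c/d$ on $(-\xi_1, 0) \cup (0, 1-\xi_1)$ (since $g(\xi)/\xi$ is strictly increasing and equals $c/d$ at $\xi_1$), $\tilde{g}' > 0$ on $(-\xi_1, 1-\xi_1)$, and $\tilde{g}(-\xi_1) = -c\xi_1/d$, so $\eta = -\xi_1$ is a zero of $-c\eta + d\tilde{g}(\eta)$. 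Still, $\tilde{g}$ has a downward jump at $1-\xi_1$ inherited from $g$, so (H) does not hold globally.

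I therefore build a bounded $C^1$-extension $h:\mathbb{R}\to\mathbb{R}$ agreeing with $\tilde{g}$ on $[-\xi_1, 1-\xi_1]$ such that, with $a=c$, $b=d$, $\xi_-:=-\xi_1$, and a suitable $\xi_+>1-\xi_1$ with $h(\xi_+)=c\xi_+/d$, the hypothesis (H) holds. This only requires a smooth, strictly increasing extension on $(1-\xi_1, \xi_+)$ satisfying $h(\eta)/\eta > c/d$ there, which is feasible because $\tilde{g}(1-\xi_1)/(1-\xi_1) = (1-c\xi_1/d)/(1-\xi_1) > c/d$ follows from $d>c$; outside $[\xi_-,\xi_+]$ I take bounded, strictly monotone smooth continuations. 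Standard bump-function constructions deliver such an $h$. With it, Propositions~\ref{prop1} and~\ref{prop:u=u+} apply to $u'(t) = -cu(t) + dh(u(t-1))$, producing entire solutions $u^\eta, u^\chi:\mathbb{R}\to\mathbb{R}$ and the decomposition $W^u(0) = \{u^\chi_t : t\in\mathbb{R}\} \cup \{0\} \cup \{u^\eta_t : t\in\mathbb{R}\}$. A time-shift defines $u^+(t) = u^\eta(t+\tau^+)$, $u^-(t) = u^\chi(t+\tau^-)$ with $u^\eta(\tau^+)=\kappa^+$, $u^\chi(\tau^-)=\kappa^-$, and I set $x^\pm := u^\pm + \xi_1$.

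Let $t_1$ be the unique time with $u^+(t_1) = 1-\xi_1$; since $u^+(0) = \kappa^+ < 1-\xi_1$ and $u^+$ is strictly increasing on $\mathbb{R}$ with range containing $1-\xi_1$, one has $t_1 > 0$. For every $t \le t_1+1$ the delayed argument satisfies $0 < u^+(t-1) \le 1-\xi_1$, lying in the common domain where $h$ and $\tilde{g}$ coincide, so $u^+$ solves the translated original equation on $(-\infty, t_1+1]$; consequently $x^+$ solves \eqref{eqn:Eg} there, is strictly increasing with $(x^+)'(t) > 0$ on $(-\infty, t_1+1)$ by Proposition~\ref{prop1}(ii), tends to $\xi_1$ as $t\to-\infty$, and satisfies $x^+(0) = \xi_1+\kappa^+$, $x^+(t_1) = 1$. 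In parallel, $u^-$ stays in $(-\xi_1, 0)$, so $x^-$ always takes values in $(0,\xi_1)$, remains in the common domain, and solves \eqref{eqn:Eg} on all of $\mathbb{R}$, yielding statement (ii).

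Forward from $t_1+1$ I extend $x^+$ by the semiflow $\Gamma$. Strict monotonicity on $(-\infty, t_1+1]$ gives $x^+(t-1) > 1$ for $t \in (t_1+1, t_1+2]$, hence $g(x^+(t-1)) = 0$ and $(x^+)'(t) = -cx^+(t)$ on that interval. Setting $t_2 := t_1 + 1 + c^{-1}\ln x^+(t_1+1) > t_1 + 1$, a straightforward bootstrap on consecutive unit intervals shows that $x^+(t) = x^+(t_1+1)e^{-c(t-t_1-1)}$ persists as long as $x^+$ stays above $1$, which by the formula holds precisely on $[t_1+1, t_2)$; hence $x^+(t_2) = 1$, and continuity extends the formula to $[t_1+1, t_2+1]$. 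Boundedness $x^+(\mathbb{R}) \subset (0, d/c)$ follows from $x^+_0 \in C_{0,d/c}$ together with Proposition~\ref{prop:bounds}(i), while strict positivity is immediate from \eqref{inteqn:x}. Statement (iii) is obtained by transporting Proposition~\ref{prop1}(iii) back to $\hat\xi_1$: near $\hat\xi_1$ the local leading unstable manifolds for the modified and original semiflows coincide, $\Gamma$-propagation gives $W^{u,\pm}(\hat\xi_1) = \{x^\pm_t : t\in\mathbb{R}\}$, and uniqueness of $x^\pm$ with $x^\pm(0)=\xi_1+\kappa^\pm$ comes from Proposition~\ref{prop:u=u+} in the translated setting. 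The main obstacle lies in the interface between the two regimes: ensuring that the backward orbit of $u^+$ on $(-\infty, t_1+1]$ remains in the common domain $[-\xi_1, 1-\xi_1]$ so that the modified dynamics truly reproduce the original, and that the forward bootstrap is not spoilt by the discontinuity of $g$ at $\xi = 1$; the latter is absorbed by \eqref{inteqn:x}, since $\{t : x^+(t-1) = 1\}$ has Lebesgue measure zero.
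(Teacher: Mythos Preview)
Your proof is correct and follows essentially the same route as the paper: translate by $\xi_1$, build a $C^1$ strictly increasing extension $h$ of $\tilde g$ so that hypothesis (H) holds with $a=c$, $b=d$, $\xi_-=-\xi_1$ and some $\xi_+>1-\xi_1$, apply Propositions~\ref{prop1} and~\ref{prop:u=u+} to get $u^\pm$, shift back to $x^\pm$, and then extend $x^+$ forward past $t_1+1$ by the original semiflow using $g\equiv 0$ above $1$. The only notable difference is cosmetic: the paper writes down an explicit extension $g^e(\xi)=2-\exp[g'(1)(1-\xi)]$ for $\xi>1$ together with an odd reflection for $\xi<0$ and then sets $h(\xi)=g^e(\xi+\xi_1)-g(\xi_1)$, whereas you argue abstractly that such an $h$ exists; both verifications of (H) go through since $g'(1)>1>0$ and $g'(0)=0$ is permitted at $\xi_-$.
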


\begin{proof}
Define the function $g^e:\R\to\R$ as follows.  
If $ \xi \in [0, 1] $ then $g^e(\xi)=g(\xi)$. 
For $\xi>1$, let  $g^e(\xi) = 2 - \exp[g'(1)(1-\xi)]$, 
and, for $\xi<0$, set $g^e(\xi)=-g^e(-\xi)$.  It is clear, that $g^e$ is a bounded increasing function is $C^1(\mathbb{R})$ so that $g^e(1^+)=g'(1)$.
Let 
\begin{equation}\label{def-h}
h: \ \R\ni\xi \mapsto g^e(\xi + \xi_1)-g(\xi_1)\in\R,
\end{equation}
and $a=c$, $b=d$. It is easy to see that $a,b,h$ satisfy condition (H) with 
$\xi_-=-\xi_1$ and some $\xi_+>1-\xi_1$. 
Therefore, Propositions \ref{prop:conv}, \ref{prop:P}, \ref{prop1} and  
\ref{prop:u=u+} give a 
 unique  solution $u^+:\R\to\R$ of \eqref{eqn:Eh} such that $u^+$ strictly increases 
with $u'>0$, $u(t)\to 0$ as $t\to-\infty$, $u(t)\to\xi_+$ as  
 $t\to\infty$, and $u^+(0)=\kappa^+$. 
Then there is a unique $t_1>0$ with $u^+(t_1)=1-\xi_1$. 

Define $x^+:\R\to\R$ so that $x^+(t)=u^+(t)+\xi_1$ for all $t\le t_1+1$. 
Then $x^+$ is a solution of equation \eqref{eqn:Eg} on $(-\infty,t_1+1]$ 
since $x^+(t-1)\in (0,1)$ for $t<t_1+1$. 
Extend  $x^+$ from the interval $(-\infty,t_1+1]$ to $\R$ such that $x^+$ is a solution of \eqref{eqn:Eg} on $\R$. Clearly, $x^+(t_1+1)>1$. 
If $t_2>t_1+1$ is minimal with the property $x^+(t_2)=1$, then
$$
t_2=t_1+1+\dfrac{1}{c}\ln x^+(t_1+1).
$$
From \eqref{eqn:Eg}, it follows that
$(x^+)'(t)=-cx^+(t)$ for all $t\in (t_1+1,t_2+1)$, and hence 
$x^+(t)=x^+(t_1+1)e^{-c(t-t_1-1)}$  for all  $t\in [t_1+1,t_2+1)$. 
From $g([0,\infty))\subset [0,1]$, it is easy to see that $x^+(t)\in(0,d/c)$ for all $t\in\mathbb{R}$. 
Therefore, the solution $x^+:\R\to\R$  of equation \eqref{eqn:Eg}  satisfies 
the properties stated in (i). 

The proof of (ii) is immediate from Propositions \ref{prop:conv}, \ref{prop:P}, \ref{prop1} and  
\ref{prop:u=u+}. 

The local 1-dimensional leading unstable manifold of $\Gamma$ at 
$\hat\xi_1$ is clearly the local 1-dimensional leading unstable manifold of $\Phi$ at $0$ shifted by $\hat\xi_1$. Hence statement (iii) follows from 
the results of Section \ref{sec3}. 
\end{proof}

An analogous result is valid for $\Phi^n$ at $\hat\xi_{1,n}$. 
\begin{proposition}\label{prop:W-Efn} 
For all sufficiently large $n$, there exist unique solutions $y^{-,n},y^{+,n}:\mathbb{R}\to\mathbb{R}$ of equation \eqref{eqn:Efn} with the following properties. 
\begin{itemize}
\item[(i)]
Function  
$y^{+,n}$  strictly increases on  $(-\infty,0]$,   
$y^{+,n}(0)=\kappa^++\xi_{1,n}$, 
$\lim_{t\to-\infty}y^{+,n}(t)=\xi_{1,n}$, $y^{+,n}(\mathbb{R})\subset (0,2d/c)$.   
\item[(ii)]
Function $y^{-,n}$ strictly decreases on $\mathbb{R}$, $y^{-,n}(\mathbb{R})=(0,\xi_{1,n})$, $y^{-,n}(0)=\kappa^-+\xi_{1,n}$, and 
$y^{-,n}(t)\to \xi_{1,n}$ as $t\to-\infty$, 
$y^{-,n}(t)\to 0$  as $t\to\infty$.
\item[(iii)]
The 1-dimensional leading unstable set of $\Phi^n$ at $\hat\xi_{1,n}$ is given by
$$
W^u(\hat\xi_{1,n})=W^{u,-}(\hat\xi_{1,n})\cup\{\hat\xi_{1,n}\}\cup W^{u,+}(\hat\xi_{1,n})
=\{y^{-,n}_t:t\in\mathbb{R}\} \cup\{\hat\xi_{1,n}\}\cup \{y^{+_,n}_t:t\in\mathbb{R}\}. 
$$
\end{itemize}
\end{proposition}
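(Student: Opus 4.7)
The plan is to mirror the proof of Proposition~\ref{prop:W-Eg}: construct a smooth, bounded, strictly increasing extension $f_n^e:\mathbb{R}\to\mathbb{R}$ of $f_n|_{[0,T]}$ for a fixed $T\in(\xi_1+\kappa^+, 1)$, pass to the shifted equation via $h_n(\xi) := f_n^e(\xi+\xi_{1,n}) - f_n(\xi_{1,n})$, verify hypothesis~(H) of Section~\ref{sec3} for $(a_n, b_n, h_n)$, and invoke Propositions~\ref{prop:conv}, \ref{prop:P}, \ref{prop1}, \ref{prop:u=u+}.

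By Proposition~\ref{prop:zeros-fn}, $\xi_{1,n}\to\xi_1$, so $[\xi_{1,n}+\kappa^-,\xi_{1,n}+\kappa^+]\subset[0,T]$ for all large $n$. Since $g'>0$ on $(0,1]$ by $(C_g)(a)$ and $\|f_n'-g'\|_{[0,T]}\to 0$ by $(C_{f_n})(b)$, we get $f_n'>0$ on any fixed compact subinterval of $(0,T]$ for large $n$. Set $f_n^e := f_n$ on $[0,T]$, extend to $(T,\infty)$ by a $C^1$ bounded strictly increasing saturating arc matching $f_n(T)$ and $f_n'(T)$ at $T$ (the same template used for $g^e$ in the proof of Proposition~\ref{prop:W-Eg}, with $1$ replaced by $T$), and to $(-\infty,0)$ by odd reflection. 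Applying the identical template to $g$ yields a reference $\tilde g^e$, and the $C^1$-uniform convergence $f_n\to g$ on $[0,T]$ produces $f_n^e\to\tilde g^e$ in the $C^1$-topology uniformly on compact subsets, hence $h_n\to\tilde h$ where $\tilde h(\xi) := \tilde g^e(\xi+\xi_1) - g(\xi_1)$. The identities $h_n(0)=0$ and $-a_n(-\xi_{1,n})+b_nh_n(-\xi_{1,n}) = a_n\xi_{1,n}-b_nf_n(\xi_{1,n}) = 0$ hold by construction, while $b_nh_n'(0) = b_nf_n'(\xi_{1,n})\to dg'(\xi_1) > dg(\xi_1)/\xi_1 = c$ by $(C_g)(a)$, so $b_nh_n'(0) > a_n$ for large $n$. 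Hypothesis~(H) for $(c,d,\tilde h)$ is verified by the same argument as in the proof of Proposition~\ref{prop:W-Eg}; by openness of the strict inequalities in (H) and the $C^1_{\text{loc}}$-closeness $h_n\to\tilde h$, (H) transfers to $(a_n,b_n,h_n)$ for all large $n$, with $\xi_{-,n}=-\xi_{1,n}$ and some $\xi_{+,n}>\kappa^+$ close to $\tilde\xi_+$.

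Propositions~\ref{prop:conv}--\ref{prop:u=u+} then furnish unique $u^{\pm,n}:\mathbb{R}\to\mathbb{R}$ solving $u'(t)=-a_n u(t)+b_n h_n(u(t-1))$ with $u^{\pm,n}(0)=\kappa^\pm$, the strict monotonicity required in (i)--(ii), and the limits $0$ at $-\infty$, $\xi_{+,n}$ and $-\xi_{1,n}$ at $+\infty$. Set $y^{\pm,n}(t) := u^{\pm,n}(t) + \xi_{1,n}$. The range of $y^{-,n}$ lies in $(0,\xi_{1,n})\subset[0,T]$, where $f_n^e=f_n$, so $y^{-,n}$ is a solution of \eqref{eqn:Efn} on all of $\mathbb{R}$; the range of $y^{+,n}$ on $(-\infty,0]$ is $[\xi_{1,n},\xi_{1,n}+\kappa^+]\subset[0,T]$, so $y^{+,n}$ solves \eqref{eqn:Efn} on $(-\infty,1]$, and we extend to $[1,\infty)$ by $\Phi^n$ applied to $y^{+,n}_1$. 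The bound $y^{\pm,n}(\mathbb{R})\subset(0,2d/c)$ follows from Proposition~\ref{prop:bounds}(ii) combined with positivity from \eqref{inteqn:y}; item~(iii) is the image under translation by $\hat\xi_{1,n}$ of the decomposition of $W^u(0)$ provided by Proposition~\ref{prop1}(iii). The main obstacle is the global geometric verification of (H) for $h_n$, in particular the strict ordering $h_n(\xi)/\xi > a_n/b_n$ on $(\xi_{-,n},0)\cup(0,\xi_{+,n})$; the parallel-construction device making $h_n$ a $C^1_{\text{loc}}$-perturbation of the already known good $\tilde h$ is the key point that circumvents a direct case analysis.
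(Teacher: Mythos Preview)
Your proposal is correct and follows essentially the same route as the paper: construct a $C^1$ strictly increasing bounded extension $f_n^e$ of $f_n|_{[0,\eta^+]}$ (the paper takes $\eta^+=\tfrac12[x^+(0)+1]$, you take a generic $T$), shift to $h_n(\xi)=f_n^e(\xi+\xi_{1,n})-f_n(\xi_{1,n})$, verify hypothesis~(H), and invoke the Section~\ref{sec3} machinery. The only substantive difference is that the paper verifies the key ingredients of~(H) directly via the estimates~\eqref{xi-1n}, whereas you argue by $C^1_{\mathrm{loc}}$-perturbation from a reference $\tilde h$; note that your ``openness'' claim needs a little care since $h_n(\xi)/\xi>a_n/b_n$ degenerates to equality at $\xi_{\pm,n}$, and to get $f_n'>0$ on all of $(0,T]$ (not just compact subintervals) you should also invoke the hypothesis $f_n'>0$ on $(0,1/2]$ from $(C_{f_n})(b)$.
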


\begin{proof}
Setting $\eta^+=\frac{1}{2}[x^+(0)+1]$, and applying Proposition \ref{prop:zeros-fn} 
with $\kappa=\eta^+$  it follows that, for all sufficiently large  $n$, 
 the only zeros of 
$\xi\mapsto -a_n\xi +b_n f_n(\xi)$ in the interval $[0,\eta^+]$ are 
$0$ and $\xi_{1,n}$, and $\xi_{1,n}\to \xi_1$, $n\to\infty$.  
In addition,  
\begin{equation}\label{xi-1n}
\frac{1}{2}\xi_1<\xi_{1,n}<\xi_1+\frac{1}{2}[1-x^+(0)], 
\ b_nf_n'(\xi_{1,n})>a_n,\ f_n'(\xi)>0
 \text{ for all }\xi\in(0,\eta^+].
\end{equation}

Let $f_n^e:\R\to\R$ be given by
	$$
	f_n^e(\xi) =
	\begin{cases}
		f_n(\xi), & \text{if } 0 \leq \xi \leq \eta^+, \\
		f_n(\eta^+) \left[ 2 - \exp\left(\dfrac{f_n'(\eta^+)}{f_n(\eta^+)} (\eta^+-\xi)\right) \right], & \text{for } \xi > \eta^+,
	\end{cases}
	$$
and $f_n^e(\xi) =-f_n^e(-\xi)$ for $\xi<0$. 
For large $n$, define 
\begin{equation}\label{def-hn}
h_n:\ \R\ni\xi \mapsto f_n^e(\xi_{1,n}+\xi)-f_n(\xi_{1,n})\in \R.
\end{equation}

Condition (H) of Section \ref{sec3} holds,  for
sufficiently large $n$, with $a=a_n$, $b=b_n$, $h=h_n$ is given in \eqref{def-hn}, $\xi_-=-\xi_{1,n}$, and some $\xi_+$ in $(\eta^+-\xi_{1,n},\infty)$. 
The proof goes analogously to that of Proposition \ref{prop:W-Eg}. 
\end{proof}

The the following statement is a straightforward consequence 
of Proposition \ref{prop:conv} and the monotone properties in 
\eqref{monotone1} if they applied in two situations: 
with $a=c$, $b=d$, $h$ is given in  \eqref{def-h}, $\xi_-=-\xi_1$, 
and some $\xi_+>1-\xi_1$;  moreover  with 
sufficiently large $n$, $a=a_n$, $b=b_n$, $h=h_n$ is given in \eqref{def-hn}, $\xi_-=-\xi_{1,n}$, and some $\xi_+$ in $(\eta^+-\xi_{1,n},\infty)$. 

\begin{proposition}\label{cor-conv}
\begin{itemize}
\item[(i)] If $\varphi\in C^+$ with $0< \varphi < \hat\xi_1$, then 
$0< \Gamma(t,\varphi)< \hat\xi_1$ for all $t\ge 0$, and 
$\Gamma(t,\varphi)\to 0$ as $t\to\infty$. 
\item[(ii)] If $\varphi\in C^+$ with $\|\varphi\|<1$ and $\varphi>\hat\xi_1$, then there is a $t_0>1$ such that $x^\varphi(t_0)=\Gamma(t_0,\varphi)(0)=1$.  
\item[(iii)] If $\varphi\in C^+$ with $0< \varphi < \hat\xi_{1,n}$, and $n$ is sufficiently large, then 
$0< \Phi^n(t,\varphi)< \hat\xi_{1,n}$ for all $t\ge 0$, and 
$ \Phi^n(t,\varphi)\to 0$ as $t\to\infty$. 
\end{itemize}
\end{proposition}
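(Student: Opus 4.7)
The plan is to reduce all three statements to the monotone framework of Section \ref{sec3} by shifting coordinates around the relevant stationary point. For parts (i) and (ii) I would use the extension $h$ defined in \eqref{def-h} with $a=c$, $b=d$, $\xi_-=-\xi_1$, and some $\xi_+>1-\xi_1$: the substitution $u(t)=x^\varphi(t)-\xi_1$ turns \eqref{eqn:Eg} into \eqref{eqn:Eh}, and on the range $x^\varphi(t-1)\in[0,1]$ the two equations coincide, since $g^e=g$ there. For part (iii), I would repeat the construction with $h_n$ from \eqref{def-hn}, taking $n$ so large that Proposition \ref{prop:zeros-fn} together with \eqref{xi-1n} supplies hypothesis (H). In each shifted framework the origin $0\in C$ belongs to the set $S$, because the trivial solution is identically zero and therefore has unbounded zero set. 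For part (i), writing $\psi=\varphi-\hat\xi_1$ gives $\hat\xi_-\le\psi<0$ in $C_{\xi_-,\xi_+}$; strict monotonicity \eqref{monotone1} yields $\Phi(2,\psi)\ll 0=\Phi(2,0)$, and Proposition \ref{prop:conv}(ii) applied to $\Phi(2,\psi)$ with reference point $0\in S$ then gives $\Phi(t,\psi)\to\hat\xi_-$ as $t\to\infty$, i.e.\ $\Gamma(t,\varphi)\to\hat 0$. The containment $0<\Gamma(t,\varphi)<\hat\xi_1$ for all $t\ge 0$ follows from monotone comparison with the stationary segments $\hat 0$ and $\hat\xi_1$ (strict for $t\ge 2$, and on $[0,2]$ verified directly from the integral equation \eqref{inteqn:x}), which in particular traps the solution inside $[0,1]$ and justifies use of the shifted equation throughout. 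Part (iii) is proved identically, replacing $\xi_1$ by $\xi_{1,n}$ and working with $h_n$, provided $n$ is large enough.

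For part (ii) I would argue by contradiction. Assume that no $t_0>1$ satisfies $x^\varphi(t_0)=1$; by continuity and the intermediate value theorem, either $x^\varphi(t)>1$ for all $t>1$ or $x^\varphi(t)<1$ for all $t>1$. In the first case, for $t>2$ one has $g(x^\varphi(t-1))=0$, so $x^\varphi$ satisfies $x'=-cx$ on $(2,\infty)$ and decays exponentially to $0$, contradicting $x^\varphi>1$. In the second case, combined with $\|\varphi\|<1$, we have $x^\varphi(s)<1$ for every $s\ge -1$, so the shifted equation governs the dynamics on the entire interval $[-1,\infty)$. The hypothesis $\varphi>\hat\xi_1$ translates to $\psi>0$ in the shifted framework; strict monotonicity gives $\Phi(2,\psi)\gg 0$, and Proposition \ref{prop:conv}(i) applied with $\phi=0\in S$ forces $\Phi(t,\psi)\to\hat\xi_+$, so $x^\varphi(t)\to\xi_++\xi_1>1$, again a contradiction. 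Hence some $t_0>1$ with $x^\varphi(t_0)=1$ must exist.

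The main obstacle I anticipate is not the convergence to the attractor, which is delegated to Proposition \ref{prop:conv}, but the bookkeeping needed to check that in each situation the shifted equation and the original equation really do coincide on the trajectory under consideration. For (i) and (iii) this reduces to trapping the solution between the constant stationary segments $\hat 0$ and $\hat\xi_1$ (respectively $\hat\xi_{1,n}$) via monotone comparison, which also supplies the strict pointwise bounds required in the conclusion. For (ii) the dichotomy based on the intermediate value theorem sidesteps having to track the dynamics above the threshold $x=1$, which is the essential trick making the proof short.
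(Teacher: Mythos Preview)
Your approach is exactly the paper's: reduce to the monotone framework of Section~\ref{sec3} via the shifts \eqref{def-h}, \eqref{def-hn}, observe that $0\in S$, and invoke Proposition~\ref{prop:conv} together with \eqref{monotone1}. Parts (i) and (iii) go through as you describe.

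There is a small gap in your Case~2 of part (ii). From $\|\varphi\|<1$ (giving $x^\varphi<1$ on $[-1,0]$) and the case assumption $x^\varphi(t)<1$ for $t>1$ you conclude $x^\varphi(s)<1$ for every $s\ge -1$, but this skips the interval $(0,1]$. In fact for $\varphi$ close to $\hat 1$ the solution can exceed $1$ on $(0,1]$: on that interval the feedback is $dg(\varphi(\cdot-1))$, which can be close to $d>c$, so $x^\varphi$ may rise above $1$ well before time $1$. The fix is short: if $x^\varphi$ crosses $1$ at some first $\tau_1\in(0,1]$, then on $(\tau_1,\tau_1+1]$ the delayed argument still lies in $[\xi_1,1]$, so $x^\varphi$ stays above $1$ there; after time $\tau_1+1$ the feedback vanishes on the portion of the history above $1$ and exponential decay forces another crossing $t_0>\tau_1+1>1$. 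Alternatively, note that the only later use of (ii) (in the proof of Proposition~\ref{prop:first-hit-v0}) needs merely that $x^\varphi$ eventually reaches $1$, for which your dichotomy already suffices with $t_0>0$.
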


In the rest of this section we assume: 
\begin{enumerate}
\item[$(Y_n)$] 
A sequence of non-negative reals $(\varepsilon_n)_{n=1}^\infty$  
with $\lim_{n\to\infty}\varepsilon_n\to 0$, a sequence $(J_n)_{n=1}^\infty$ of index sets, and for each  $n\in\mathbb{N}$ and $j\in J_n$, 
 solutions $Y^{+,n,j}:\R\to \R$ of equation \eqref{eqn:Efn} 
are given such that 
 $$
 \xi_{1,n}-\varepsilon_n\le Y^{+,n,j}(t)\le Y^{+,n,j}(0)=\kappa^+ + \xi_{1,n} 
 \text{ for all } t<0
 $$ 
 holds. 
 \end{enumerate}
The above condition $(Y_n)$ will be applied in two situations: 
To study the set $W^{u,+}(\hat{\xi}_{1,n})=\{y^{+,n}_t:t\in\mathbb{R}\}$ given in Proposition \ref{prop:W-Efn}, we will choose $\varepsilon_n=0$, 
$J_n=\{0\}$, and 
$Y^{+,n,j}=y^{+,n}$, that is, $Y^{+,n,j}$ is independent of $j\in J_n$. 
In the proof of Theorem \ref{thm:Hopf} when the unstable set  $W^{u}(\mathcal{Q}^n)$ is considered, the index set 
will be a subset of  a local unstable set of a period map, and 
$\varepsilon_n$ will be a bound for $\sup_{t\in\mathbb{R}}|q^n(t)-\xi_{1,n}|$.  
  
In the proof of the next proposition we will apply Proposition \ref{un-to-u+} for solutions $u,u_n$ of equations \eqref{eqn:Eh}, \eqref{eqn:Eh_n} with 
nonlinearities $h,h_n$ defined above by \eqref{def-h}, \eqref{def-hn}.

\begin{proposition}\label{prop:yn-x-on-10}
If $(Y_n)$ is satisfied, then  
	$$
	\sup_{j\in J_n}\|Y_0^{+,n,j}-x^+_0\|\to 0 \text{ as }n\to\infty.
	  $$  
\end{proposition}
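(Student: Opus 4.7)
The plan is to translate the problem by subtracting the equilibria $\xi_1$ and $\xi_{1,n}$, so as to reduce the statement to the convergence result of Proposition \ref{un-to-u+}, and then to upgrade pointwise convergence to uniformity in $j\in J_n$ via a compactness and uniqueness argument.

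First, set $u^+(t)=x^+(t)-\xi_1$ and $U^{n,j}(t)=Y^{+,n,j}(t)-\xi_{1,n}$ for all $t\le 0$. By Proposition \ref{prop:W-Eg}(i), $u^+$ satisfies $(E_h)$ on $(-\infty,0]$ with $a=c$, $b=d$, $h$ from \eqref{def-h}, and $u^+(t)\in[0,\kappa^+]$ for $t\le 0$, $u^+(0)=\kappa^+$. For $U^{n,j}$, hypothesis $(Y_n)$ gives $Y^{+,n,j}(t-1)=\xi_{1,n}+U^{n,j}(t-1)\in[\xi_{1,n}-\varepsilon_n,\xi_{1,n}+\kappa^+]$, and by \eqref{xi-1n} together with $\varepsilon_n\to 0$ this set lies in $[0,\eta^+]$ for all sufficiently large $n$. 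Hence $f_n^e$ coincides with $f_n$ on this range, and using $a_n\xi_{1,n}=b_n f_n(\xi_{1,n})$ one computes
$$
(U^{n,j})'(t)=-a_n U^{n,j}(t)+b_n h_n\bigl(U^{n,j}(t-1)\bigr),
$$
with $h_n$ from \eqref{def-hn}. Thus $U^{n,j}$ solves $(E_{h_n})$ on $(-\infty,0]$ with $U^{n,j}(0)=\kappa^+$ and $-\varepsilon_n\le U^{n,j}(t)\le\kappa^+$ for $t\le 0$. Condition $(C_{f_n})(a)(b)$ together with $\xi_{1,n}\to\xi_1$ (Proposition \ref{prop:zeros-fn}) yields $a_n\to c$, $b_n\to d$, and
$\|h_n-h\|_{[\kappa^-,\kappa^+]}+\|h_n'-h'\|_{[\kappa^-,\kappa^+]}\to 0$.

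Suppose the conclusion fails: there exist $\varepsilon>0$ and sequences $n_k\to\infty$, $j_k\in J_{n_k}$ with $\|U^{n_k,j_k}_0-u^+_0\|\ge\varepsilon$. From the integral equation and the uniform bounds above, the family $(U^{n_k,j_k})$ is uniformly bounded on $(-\infty,0]$ and equicontinuous on each compact subinterval. By Arzelà–Ascoli and diagonal extraction, a subsequence (still indexed by $k$) converges uniformly on compact subsets of $(-\infty,0]$ to a continuous $u:(-\infty,0]\to\mathbb{R}$. Passing to the limit in the integral equation — using $a_{n_k}\to c$, $b_{n_k}\to d$, and the uniform convergence $h_{n_k}\to h$ on $[\kappa^-,\kappa^+]$ — one finds that $u$ solves $(E_h)$ on $(-\infty,0]$; the bounds $-\varepsilon_{n_k}\le U^{n_k,j_k}(t)\le\kappa^+$ with $\varepsilon_{n_k}\to 0$ give $u(t)\in[0,\kappa^+]$ for $t\le 0$, and $u(0)=\kappa^+$. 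Extending $u$ forward by the semiflow associated to $(E_h)$ yields a solution on $\mathbb{R}$ to which Proposition \ref{prop:u=u+}(i) applies, giving $u\equiv u^+$ on $\mathbb{R}$, hence $u_0=u^+_0$. This contradicts $\|U^{n_k,j_k}_0-u^+_0\|\ge\varepsilon$, and since $Y^{+,n,j}_0-x^+_0=U^{n,j}_0-u^+_0$ the claim follows.

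The key obstacle is ensuring that any cluster point $u$ of $(U^{n_k,j_k})$ takes \emph{nonnegative} values on $(-\infty,0]$, since without this one cannot invoke Proposition \ref{prop:u=u+}(i) to identify $u$ with $u^+$; this is exactly where the uniform lower bound $-\varepsilon_n\to 0$ in hypothesis $(Y_n)$ is essential. The uniformity in $j$ is not established by a quantitative estimate but emerges from the contradiction/subsequence scheme, exploiting that $u^+$ is the unique admissible limit regardless of the choice of $j_k$.
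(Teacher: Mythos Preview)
Your proof is correct and follows essentially the same route as the paper's: translate by the equilibria to reduce to $(E_h)$ and $(E_{h_n})$, set up a contradiction, extract a convergent subsequence via Arzel\`a--Ascoli, and identify the limit using Proposition~\ref{prop:u=u+}(i). The only structural difference is that you reproduce the compactness-and-uniqueness argument inline, whereas the paper packages that step as an application of Proposition~\ref{un-to-u+}.

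One small slip: the equality $Y^{+,n,j}_0-x^+_0=U^{n,j}_0-u^+_0$ in your last line is not quite right, since $Y^{+,n,j}_0-x^+_0=U^{n,j}_0-u^+_0+(\xi_{1,n}-\xi_1)$. This also means that when you set up the contradiction, negating $\sup_j\|Y^{+,n,j}_0-x^+_0\|\to 0$ gives $\|Y^{+,n_k,j_k}_0-x^+_0\|\ge\varepsilon$, not directly $\|U^{n_k,j_k}_0-u^+_0\|\ge\varepsilon$. The paper handles this via the triangle inequality: since $\xi_{1,n_k}\to\xi_1$, one gets $\|U^{n_k,j_k}_0-u^+_0\|\ge\varepsilon/2$ for large $k$, which suffices. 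The fix is routine and your argument goes through.
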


\begin{proof}
Suppose that the statement is not true. Then there exist a $\delta>0$, 
a sequence $(n_k)_{k=1}^\infty$ with $n_k\to\infty $ 
as $k\to\infty$, elements $j_k\in J_{n_k}$, and  solutions $Y^{+,n_k,j_k}:\R\to \R $ of \eqref{eqn:Efn} with $n=n_k$ 
satisfying 
$$
\xi_{1,n_k}-\varepsilon_{n_k}\le Y^{+,n_k,j_k}(t)\le Y^{+,n_k,j_k}(0)=\kappa^++\xi_{1,n_k} \quad \text{ for all }t\le 0,
$$ 
and 
$\|Y_0^{+,n_k,j_k}-x^+_0\|\ge\delta  $ for all $k\in\N$. 

Defining   $\kappa^-=-\xi_1/2$, by \eqref{xi-1n},  
for all $\xi\in [\kappa^-,\kappa^+]$ and for all large $n$, we have 
$
\xi+\xi_1\in  [0,\eta^+]$, 
and
$$
\xi+\xi_{1,n}\in \left[ -\frac{\xi_1}{2} + \xi_{1,n},
x^+(0)-\xi_1 + \xi_{1,n}\right]\subset \left[ 0,x^+(0)+\frac{1}{2}[1-x^+(0)]\right]  \subset [0,\eta^+].
$$

Recall $u^+(t)=x^+(t)-\xi_1$, $t\le 0$,  and that $u^+$ is a solution of \eqref{eqn:Eh} on $(-\infty,0]$. 
The functions 
$$
u^{n_k}(t)=Y^{+,n_k,j_k}(t)-\xi_{1,n_k} \quad (t\le 0)
$$
are solutions of equations \eqref{eqn:Eh_n} with $n=n_k$ on the interval 
$(-\infty,0]$, and 
$u^{n_k}(t)\in [-\varepsilon_{n_k},\kappa^+]\subset [\kappa^-,\kappa^+]$, 
$u^{n_k}(t)\le  u^{n_k}(0)=\kappa^+$ hold for all $t\le 0$ and $k\in\N$.  Moreover, by $\xi_{1,n}\to \xi_1$, 
$$
\|u^{n_k}_0-u^+_0\|=\|Y_0^{+,n_k,j_k}-\xi_{n_k}-x^+_0+\xi_1\|
\ge \|Y_0^{+,n_k,j_k}-x^+_0\|-|\xi_{1,n_k}-\xi_1|\ge \delta -\dfrac{1}{2}\delta=
\dfrac{1}{2}\delta
$$
for all sufficiently large $k\in\N$. 

It is sufficient to verify condition \eqref{h-hn} to arrive at a contradiction since  
 Proposition \ref{un-to-u+} shows the 
uniform convergence of $(u^{n_k})$ to $u^+$ on compact subintervals 
of  $(-\infty,0]$, contradicting 
$\|u^{n_k}_0-u^+_0\|\ge \delta/2$ for large $k\in\N$. 

If $\xi\in [\kappa^-,\kappa^+]$,  and $i\in\{0,1\}$, then
\begin{align*}
|h_n^{(i)}(\xi)-h^{(i)}(\xi)|
& = |f_n^{(i)}(\xi_{1,n}+\xi)-g^{(i)}(\xi_{1}+\xi)| \\
& \le |f_n^{(i)}(\xi_{1,n}+\xi)-g^{(i)}(\xi_{1,n}+\xi)| + |g^{(i)}(\xi_{1,n}+\xi)-g^{(i)}(\xi_{1}+\xi)| \to 0 
\end{align*}
as $n\to\infty$ uniformly in $\xi\in [\kappa^-,\kappa^+]$ by condition $(C_{f_n})(b)$ 
with $\kappa=1-\eta^+$, and by the uniform continuity of $g|_{[0,\eta^+]}$, 
 $g'|_{[0,\eta^+]}$ together with $\xi_{1,n}\to \xi_1$. Therefore condition \eqref{h-hn} of Proposition \ref{un-to-u+} holds, and the proof is complete. 
\end{proof}

The proof of the next result is a slight modification of Proposition 2.6 in \cite{BKSZ}. Recall the definition of $t_2$ from Proposition \ref{prop:W-Eg}. 

\begin{proposition}\label{prop:yn-x-on12}
Suppose that $(Y_n)$ is satisfied. 	Fix $m\in\N$ with $m>t_2$ then $x^+(t)<1$ for all $t\in(t_2,m]$
and
$$
\sup_{j\in J_n}\max_{t\in [0,m]}|Y^{+,n,j}(t)-x^+(t)|\to 0 \text{ as }n\to\infty. 
$$    
\end{proposition}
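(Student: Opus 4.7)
The plan is to advance the uniform convergence one unit at a time by induction on $[k,k+1]\subset[-1,m]$, using the integral-equation formulation of both \eqref{eqn:Eg} and \eqref{eqn:Efn}. The base case $k=-1$ is precisely Proposition \ref{prop:yn-x-on-10}: $\sup_{j\in J_n}\|Y^{+,n,j}_{0}-x^{+}_{0}\|\to 0$, so the desired uniform convergence already holds on $[-1,0]$. The goal is to push it through the unit intervals $[0,1], [1,2], \ldots, [m-1,m]$.

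For the inductive step, assume uniform convergence (in $t$ and in $j\in J_n$) has been established on $[k-1,k]$. On $[k,k+1]$ I write
\begin{align*}
Y^{+,n,j}(t)-x^{+}(t)
&= e^{-a_n(t-k)}Y^{+,n,j}(k)-e^{-c(t-k)}x^{+}(k)\\
&\quad+\int_{k}^{t}\Bigl[e^{-a_n(t-s)}b_n f_n(Y^{+,n,j}(s-1))-e^{-c(t-s)}d\,g(x^{+}(s-1))\Bigr]\,ds.
\end{align*}
The boundary term goes to $0$ uniformly in $t$ by $a_n\to c$ and the induction hypothesis at $s=k$. Since $a_n\to c$, $b_n\to d$, and $0\le f_n,g\le 1$, the integrand difference reduces, after a routine rearrangement, to controlling the pointwise quantity $|f_n(Y^{+,n,j}(s-1))-g(x^{+}(s-1))|$ up to an additional error that is $o(1)$ uniformly in $s$, $j$.

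To handle that pointwise quantity, fix $\kappa>0$ and split $[k,k+1]=A^k_\kappa\cup B^k_\kappa$, where $A^k_\kappa=\{s:|x^{+}(s-1)-1|\le\kappa\}$. By Proposition \ref{prop:W-Eg} the only points of $[-1,m]$ at which $x^{+}(\cdot)=1$ are $t_1$ and $t_2$; the first crossing is transversal because $(x^{+})'(t_1)>0$ strictly, the second is transversal because $x^{+}(t)=x^{+}(t_1+1)e^{-c(t-t_1-1)}$ on $[t_1+1,t_2+1]$ gives $(x^{+})'(t_2)=-c<0$, and the standing hypothesis $x^{+}<1$ on $(t_2,m]$ rules out any further crossing. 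Hence the measure of $A^k_\kappa$ is $O(\kappa)$, and since $0\le f_n,g\le 1$ with $a_n,b_n$ bounded, the contribution of $A^k_\kappa$ to the integral is $O(\kappa)$ uniformly in $n$ and $j$. On $B^k_\kappa$ one has $x^{+}(s-1)\in[0,1-\kappa]\cup[1+\kappa,d/c]$, and for large $n$ the induction hypothesis places $Y^{+,n,j}(s-1)$ in the slightly enlarged set $[0,1-\kappa/2]\cup[1+\kappa/2,\infty)$ uniformly in $s$ and $j$. Combining the uniform bound $\|f_n-g\|_{[0,1-\kappa/2]\cup[1+\kappa/2,\infty)}\to 0$ from $(C_{f_n})(b)$ with the uniform continuity of $g|_{[0,1-\kappa/2]}$ and the identity $g\equiv 0$ on $[1+\kappa/2,\infty)$ yields $|f_n(Y^{+,n,j}(s-1))-g(x^{+}(s-1))|\to 0$ uniformly on $B^k_\kappa$, uniformly in $j$.

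Given $\varepsilon>0$, I first choose $\kappa$ so small that the $A^k_\kappa$-contribution stays below $\varepsilon/2$, and then $n$ large so that all remaining terms are below $\varepsilon/2$; this advances the uniform convergence to $[k,k+1]$, and iterating at most $m$ times delivers the result on $[0,m]$. The main obstacle is the genuine discontinuity of $g$ at $\xi=1$, which rules out any direct Gronwall estimate across $[t_1,t_2+1]$; transversality of the two crossings of $x^{+}$ with $1$, together with the hypothesis that no additional crossing occurs on $(t_2,m]$, is precisely what shrinks the bad-set measure with $\kappa$ and rescues the argument.
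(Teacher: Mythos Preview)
Your proof is correct and follows essentially the same approach as the paper's: induct over unit subintervals using the integral equations, split the integrand according to whether $x^{+}(s-1)$ is near $1$ or bounded away from it, bound the near-$1$ contribution by the Lebesgue measure of that set (controlled via transversality of the two crossings $t_1,t_2$), and handle the away-from-$1$ part using $(C_{f_n})(b)$ together with the Lipschitz bound on $g|_{[0,1]}$. The paper carries explicit constants and proves a quantitative bound $\|x^{+}_k-Y^{+,n,j}_k\|\le \delta k_2^{k}$ by tracking the geometric growth through the $m$ steps, whereas you phrase the same induction as an $\varepsilon$--$\kappa$ argument (fix $\kappa$, then choose $n$ large), but the underlying mechanism is identical.
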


\begin{proof}
The set $\{t\in [-1,m]: x^+(t)=1\}$ consists of two points, $t_1,t_2$. 
Define the set 
$$
\Delta(\delta)=\{t\in[-1,m]: |x^+(t)-1|<\delta\}.
$$
There exist a sufficiently small $\delta_1>0$ and a $\mu>0$ such that, for all $\delta\in (0,\delta_1)$,  $\Delta(\delta)$
is the union of two open intervals around $t_1$ and  
$t_2$, moreover $(x^+)'(t)$ exists for all $t\in\Delta(\delta)$ with  
$$
|(x^+)'(t)|\ge \mu \text{ for all }t\in\Delta(\delta).
$$
Then it is easy to see that, for the sum of the lengths $|\Delta(\delta)|$ of the two open intervals, we have 
$$
|\Delta(\delta)|\le k_1\delta.
$$ 
with  $k_1=4/\mu$. 
Define the constants
$$
k_2=2+\dfrac{d}{c}+(d+\delta_1)(2+2k_1+\|g'\|_{[0,1]}), 
\  \delta_2=\dfrac{\delta_1}{2k_2^m}. 
$$
 
We show that, if 
 $\delta\in (0,\delta_2)$,  and 
$$
 \sup_{j\in J_n}\|x^{+}_0-Y^{+,n,j}_0\|<\delta,
 $$ 
 then 
	\begin{equation}\label{norm:x-y}
		\sup_{j\in J_n}\|x^{+}_k-Y^{+,n,j}_k\|\le \delta k_2^{j} 
	\end{equation}
	for all  $ k\in \{1,2,\dots,m\}$, provided $n$ is sufficiently large. 
This is sufficient  
 by Proposition \ref{prop:yn-x-on-10}. 
We prove by induction. 

Fix $\delta\in (0,\delta_2)$. Then, for all large enough $n$,  
\begin{equation}\label{n-large}
|c-a_n|<\delta,\ |d-b_n|<\delta,\ \|f_n-g\|_{[0,1-\delta]\cup[1+\delta,\infty)}<\delta,
\end{equation}
and, by  Proposition \ref{prop:yn-x-on-10},  
$$
 \sup_{j\in J_n}\|x^{+}_0-Y^{+,n,j}_0\|<\delta.
$$ 

Suppose $k\in\{0,\ldots,m-1\}$ and \eqref{norm:x-y} is satisfied. This is the case if $k=0$. 
It remains to show that  
$\sup_{j\in J_n}\|x^+_{k+1}-Y^{+,n,j}_{k+1}\|\le\delta k_2^{k+1}.$ 

For the sake of notational simplicity, set $x=x^+$ and for some $j\in J_n$, let $y=Y^{+,n,j}$. We will prove 
$\|x^+_{k+1}-y_{k+1}\|\le\delta k_2^{k+1}$ for any fixed $j\in J_n$. 
	
For $t\in [k,k+1]$ we have 
	$$x(t)=e^{-c(t-k)}x(k)+d\int_{k}^{t} e^{-c(t-s)}g(x(s-1))  ds$$ 
	and 
	$$y(t)=e^{-a_n(t-k)}y(k)+b_n\int_{k}^{t}e^{-a_n(t-s)}f_n(y(s-1)) ds.$$ 
Using these integral equations, $g([0,\infty))\subset[0,1]$, 
	the inequality $|e^{-cu}-e^{-a u}|\le |c-a|$ for $0\le u \le 1$, the upper bound $d/c$ for $x^+$,  and \eqref{n-large}, we obtain, for $k\leq t \leq k+1$, that
	\begin{align*}
		\left|x(t)-y(t)\right| 
		& \leq \left|e^{-c(t-k)}-e^{-a_n(t-k)}\right|x(k)+e^{-a_n(t-k)}\left|x(k)-y(k)\right| \\
		& + |d-b_n|\int_{k}^{t} e^{-c(t-s)}g(x(s-1))   ds + b_n\int_{k}^{t} \left|e^{-c(t-s)}-e^{-a_n(t-s)}\right|g(x(s-1))   ds\\
		& + b_n \int_{k}^{t} e^{-a_n(t-s)}\left|g(x(s-1))-f_n(y(s-1))\right|   ds\\
		& \leq |c-a_n|x(k)+ \|x_k-y_k\|+|d-b_n| + (d+\delta)|c-a_n|\\
		& +(d+\delta)\int_{k}^{k+1} \left|g(x(s-1))-f_n(y(s-1))\right|   ds\\
		& \leq \delta\left( \dfrac{d}{c}+1 +k_2^{k}+  (d+\delta)\right) +
		(d+\delta)\int_{k-1}^{k} \left|g(x(s))-f_n(y(s))\right|  ds.
	\end{align*} 
	Let 
	$$\Delta_k(\delta)=\{t\in[k-1,k]:\quad |x(t)-1|<2\delta k_2^{k}\}.$$  
	Then, as $2\delta k_2^{k}<2\delta_2 k_2^{m}\le \delta_1$, 
	 we have $|\Delta_k(\delta)|\leq 2 \delta k_2^{k}k_1$. 
	Hence 
	$$\int_{\Delta_k(\delta)} \left|g(x(s))-f_n(y(s))\right|   ds\leq  
	|\Delta_k(\delta)|\leq 2k_2^{k}k_1\delta$$ 
	since $|g(x(s))-f_n(y(s))|\leq 1$ for all $s\in [k-1,k]$. 
	Let us define 
	$$S_+ = \{t\in[k-1,k]:\quad  x(t)\geq 1 + 2\delta k_2^{k}\},\ 
	S_- = \{t\in[k-1,k]:\quad  x(t)\leq 1-2\delta k_2^{k}\}.$$
	From the inductive hypothesis $\|x_k-y_k\|<\delta k_2^{k}$, it follows that 
	$$
	1+\delta k_2^{k}<y(s)\quad \text{ for all } s \in S_+, \text{ and } 
	 y(s)<1-\delta k_2^{k-1}\quad \text{ for all } s \in S_-.
	$$ 
	For $s\in S_+$ we have
	\begin{align*}
		\left|g(x(s))-f_n(y(s))\right|& = 	f_n(y(s))\leq \|f_n\|_{[1+\delta k_2^{k},\infty)}\leq \|f_n\|_{[1+\delta,\infty)}\\ &=\|g-f_n\|_{[1+\delta,\infty)} < \delta.
	\end{align*}
	If $s\in S_-$ then 
	\begin{align*}
		\left|g(x(s))-f_n(y(s))\right| 
		& \leq \left|g(x(s))-g(y(s))\right|+\left|g(y(s))-f_n(y(s))\right|\\
		& \leq \|g'\|_{[0,1]}|x(s)-y(s)|+ \|g-f_n\|_{[0,1-\delta k_2^{k}]}\\
		& \leq \|g'\|_{[0,1)}\|x_k-y_k\| + \|g-f_n\|_{[0,1-\delta]}\\
		& \leq \left(\|g'\|_{[0,1)}k_2^{k}+1\right)\delta.
	\end{align*}
	Therefore 
	$$\int_{k}^{k+1} \left|g(x(s-1))-f_n(y(s-1))\right|  ds \leq \delta \left(2k_2^{k}k_1+\|g'\|_{[0,1)}k_2^{k}+1\right).$$ 
	It follows that 
	\begin{align*}
		\|x_{k+1}-y_{k+1}\|
		& = \delta\left(k_2^{k}\left[1+(d+\delta)(2k_1+\|g'\|_{[0,1]})\right]+\frac{d}{c}+2(d+\delta)+1\right).
	\end{align*}
	Clearly, $1\le k_2^{k}$, and thus
	$$\frac{d}{c}+2(d+\delta)+1\leq k_2^{k}\left(\frac{d}{c}+2(d+\delta)+1\right).$$
	Consequently, by $\delta<\delta_1$,
	\begin{align*}
		\|x_{k+1}-y_{k+1}\|
		& \leq \delta k_2^{k}\left(2+(d+\delta)\left(2+2k_1+\|g'\|_{[0,1]})+\frac{d}{c}\right)\right) < \delta k_2^{k+1}.
	\end{align*}
As $j\in J_n$ was arbitrary, the proof is complete. 
	\end{proof}

Recall that, by statement (i) of Proposition \ref{prop:W-Eg}, 
 $x^+(t)>1 $ for all $t\in(t_1,t_2)$, and $t_2-t_1=1+(1/c)\ln x^+(t_1+1)>1$. 
Then we can find a $t_3\in (t_1,t_2)$ and an $\varepsilon>0$ with 
$t_3+1<t_2$ and $x^+(t)\ge 1+2\varepsilon$ for all $t\in[t_3,t_3+1]$. 
By choosing  $m\in\mathbb{N}$ as the integer part of $t_2+1$,  Proposition \ref{prop:yn-x-on12} 
implies the following corollary.

\begin{corollary}\label{cor:yn>1+e}
Suppose that $(Y_n)$ is satisfied. Let $\varepsilon>0$ and $t_3\in (t_1,t_2-1)$ be given such that  $x^+(t)\ge 1+2\varepsilon$ for all $t\in[t_3,t_3+1]$.  Then
$$
Y^{+,n,j}(t)\ge 1+\varepsilon \quad \text{ for all }t\in [t_3,t_3+1], 
\text{ and for all }j\in J_n
$$
provided $n$ is sufficiently large. 
\end{corollary}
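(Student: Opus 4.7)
The plan is to deduce the corollary directly from Proposition \ref{prop:yn-x-on12}, which already provides uniform closeness of $Y^{+,n,j}$ to $x^+$ on compact intervals, uniformly in $j \in J_n$, as $n \to \infty$. The hypothesis pins down $x^+$ as being bounded below by $1+2\varepsilon$ on $[t_3, t_3+1]$, so an $\varepsilon$-approximation of $x^+$ by $Y^{+,n,j}$ on this interval is more than enough to yield the claimed bound $1+\varepsilon$.

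First I would choose an integer $m \in \mathbb{N}$ with $m > t_2$; then since $t_3 \in (t_1, t_2-1)$, we have $t_3 + 1 < t_2 < m$, so the interval $[t_3, t_3+1]$ is contained in $[0,m]$. Next, I would invoke Proposition \ref{prop:yn-x-on12} applied with this choice of $m$: it guarantees
\[
\sup_{j \in J_n}\, \max_{t \in [0,m]}\, \bigl|Y^{+,n,j}(t) - x^+(t)\bigr| \to 0 \quad \text{as } n \to \infty.
\]
Hence there exists $n_0 \in \mathbb{N}$ such that, for all $n \geq n_0$ and all $j \in J_n$, $\max_{t \in [0,m]} |Y^{+,n,j}(t) - x^+(t)| < \varepsilon$.

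Finally, for any $t \in [t_3, t_3+1] \subset [0,m]$ and any $j \in J_n$ with $n \geq n_0$, the reverse triangle inequality and the hypothesis on $x^+$ give
\[
Y^{+,n,j}(t) \geq x^+(t) - \bigl|Y^{+,n,j}(t) - x^+(t)\bigr| > (1 + 2\varepsilon) - \varepsilon = 1 + \varepsilon,
\]
which is the desired inequality.

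There is no serious obstacle here, since all the technical work has been done in Proposition \ref{prop:yn-x-on12}; the only thing to verify is that the interval of interest $[t_3, t_3+1]$ lies inside the interval $[0,m]$ on which that proposition gives uniform convergence, which follows immediately from $t_3 + 1 < t_2 < m$.
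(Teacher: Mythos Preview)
Your proof is correct and takes essentially the same approach as the paper: the paper simply remarks that choosing $m$ to be the integer part of $t_2+1$ and applying Proposition~\ref{prop:yn-x-on12} yields the corollary, which is exactly the argument you have spelled out in detail.
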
	
		
Assume conditions $(C_g)(a)(b)$ and $(C_{f_n})(a)(b)$. 
Then, for each given $\varepsilon>0$, Proposition \ref{prop:main0-application} states that, for all sufficiently large $n$, 
equation \eqref{eqn:Efn} has a stable periodic orbit $\mathcal{O}^n$  such that the set $\{\psi\in C^+: \psi(s)\ge 1+\varepsilon,\ s\in[-1,0] \}$ belongs 
to the region of attraction of $\mathcal{O}^n$. 		
Corollary \ref{cor:yn>1+e} will guarantee that  
$Y^{+,n,j}_t\to \mathcal{O}^n$ as $t\to\infty$ for all $j\in J_n$, provided 
$n$ is sufficiently large.  

Another corollary  of Proposition \ref{prop:yn-x-on12} is as follows. 
If  $x^+_T\ll\hat{\xi}_1$ for some $T>t_2+1$, then by Proposition \ref{prop:yn-x-on12} with an integer $m\ge T$, it follows that 
$Y^{+,n,j}_T\ll \hat{\xi}_{1,n}$, for all $j\in J_n$, provided $n$ is large enough. Then Proposition \ref{cor-conv}(iii) applies to show $Y^{+,n,j}(t)\to 0$, for all $j\in J_n$, as 
$t\to\infty$.  

\begin{corollary}\label{cor:yntozero}
Suppose that $(Y_n)$ is satisfied and there exists a $T>t_2+1$ such that 
$x^+_T\ll \hat{\xi}_1$. 		Then, for all sufficiently large $n$,  
$$
\sup_{j\in J_n}Y^{+,n,j}(t)\to 0 \quad \text{ as }t\to\infty.
$$
\end{corollary}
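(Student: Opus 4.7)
The plan is to first use Proposition \ref{prop:yn-x-on12} to promote the pointwise strict inequality $x^+_T \ll \hat{\xi}_1$ into a \emph{uniform-in-$j$} gap below $\hat{\xi}_{1,n}$, and then dominate every orbit $s \mapsto Y^{+,n,j}_{T+s}$ by a single comparison orbit whose decay to $\hat 0$ is already guaranteed by Proposition \ref{cor-conv}(iii). The uniform-in-$j$ control is the only real difficulty: Proposition \ref{cor-conv}(iii) itself is a scalar statement giving convergence for each fixed initial segment, while the index set $J_n$ may be uncountable and carry no useful compactness.

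Concretely, since $x^+_T \ll \hat{\xi}_1$ and $[T-1,T]$ is compact, pick $\eta>0$ with $3\eta < \min_{t\in[T-1,T]}(\xi_1 - x^+(t))$ and $3\eta < \xi_1$. Fix an integer $m \ge T$. By Proposition \ref{prop:yn-x-on12} together with $\xi_{1,n}\to \xi_1$, for all sufficiently large $n$ one has
\[
\sup_{j\in J_n}\ \max_{t\in[T-1,T]}\bigl|Y^{+,n,j}(t)-x^+(t)\bigr| < \eta \qquad\text{and}\qquad |\xi_{1,n}-\xi_1|<\eta.
\]
Combining these with the choice of $\eta$ gives $Y^{+,n,j}(t) \le x^+(t)+\eta \le \xi_1 - 2\eta \le \xi_{1,n} - \eta$ for every $t\in[T-1,T]$ and every $j\in J_n$; equivalently, $0 \le Y^{+,n,j}_T \le \hat{\xi}_{1,n} - \hat{\eta}$ pointwise, uniformly in $j$. (The lower bound $Y^{+,n,j}_T \ge 0$ holds because $Y^{+,n,j}_0 \in C^+$ and $\Phi^n$ maps $C^+$ into itself.)

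Now introduce the constant segment $\hat{\psi}_n := \hat{\xi}_{1,n}-\hat{\eta}$. For large $n$ one has $0 \ll \hat{\psi}_n \ll \hat{\xi}_{1,n}$, and by the construction in the proof of Proposition \ref{prop:W-Efn} the nonlinearity $f_n$ is strictly increasing on $(0,\eta^+]$ with $\eta^+=\tfrac{1}{2}(x^+(0)+1) > \xi_{1,n}$. Consequently $\Phi^n$ is monotone on $C_{0,\eta^+}$ in the sense of \eqref{monotone1}, applied to the shifted equation appearing in Proposition \ref{prop:W-Efn}. Proposition \ref{cor-conv}(iii) then yields both $\Phi^n(s,\hat{\psi}_n) \ll \hat{\xi}_{1,n}$ for every $s\ge 0$ and $\Phi^n(s,\hat{\psi}_n) \to \hat 0$ as $s\to\infty$, so in particular $\Phi^n(\cdot,\hat{\psi}_n)$ stays in $C_{0,\eta^+}$, and so does $\Phi^n(\cdot,Y^{+,n,j}_T)$ (again by Proposition \ref{cor-conv}(iii)). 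Monotonicity of $\Phi^n$ on $C_{0,\eta^+}$ therefore delivers
\[
0 \le Y^{+,n,j}_{T+s} = \Phi^n\bigl(s,Y^{+,n,j}_T\bigr) \le \Phi^n\bigl(s,\hat{\psi}_n\bigr) \qquad (s\ge 0,\ j\in J_n).
\]
Taking the supremum in $j$ and letting $s\to\infty$ yields $\sup_{j\in J_n}\|Y^{+,n,j}_{T+s}\| \to 0$, which is the claim.

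The main obstacle is exactly the uniformity in $j\in J_n$, which is exactly why the argument routes through a single dominating constant trajectory: Proposition \ref{prop:yn-x-on12} furnishes one common upper bound $\hat{\psi}_n \ll \hat{\xi}_{1,n}$ valid for all $j$, and monotonicity of $\Phi^n$ on the region $C_{0,\eta^+}$ (where $f_n$ is monotone) upgrades the scalar decay statement of Proposition \ref{cor-conv}(iii) into the required uniform-in-$j$ one. All other ingredients are bookkeeping.
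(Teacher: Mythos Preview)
Your proof is correct and follows the same route as the paper: use Proposition~\ref{prop:yn-x-on12} to push $Y^{+,n,j}_T$ strictly below $\hat{\xi}_{1,n}$ uniformly in $j$, and then invoke Proposition~\ref{cor-conv}(iii). Your introduction of the single dominating trajectory $\Phi^n(\cdot,\hat\psi_n)$ to secure the \emph{uniform-in-$j$} decay is a genuine improvement over the paper's one-line sketch, which only asserts $Y^{+,n,j}(t)\to 0$ for each $j$ and leaves the passage to $\sup_{j\in J_n}$ implicit.
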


\section{Proof of Theorems~\ref{thm:Eg} and \ref{thm:Efn}}\label{sec5}
Throughout this section, we suppose  condition $(C_g)(a)$, and fix 
 $c>0$. 
 For $d>c$, denote by $z^d:[0,\infty)\to \mathbb{R}$ 
the solution of equation \eqref{eqn:Eg} with initial condition 
 $z^{d}(t)=e^{-ct},\ 0\le t\le 1$.
 By Proposition \ref{prop:bounds}, 
$z^{d}(t)\in (0,d/c]$ for every $t\ge 0$.

After a sequence of propositions, we will be in a position to show 
Theorems~\ref{thm:Eg} and \ref{thm:Efn}. 

\begin{proposition}\label{prop:ordering}
	Let $d_{2}>d_{1}>c$ and suppose that, for some
	$\omega\in(1,\infty]$,  
	$z^{d_2}(t)<1$ for every $t\in(1,\omega)$.  
	Then $z^{d_1}(t) < z^{d_2}(t)$ for all $t\in(1,\omega)$.
\end{proposition}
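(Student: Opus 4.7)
The plan is to study the difference $v(t) = z^{d_2}(t) - z^{d_1}(t)$ and show that $v > 0$ throughout $(1, \omega)$. On $[0, 1]$ the two solutions coincide with $e^{-ct}$, so $v \equiv 0$ there; for $t \in [1, 2]$ the delayed arguments $z^{d_i}(t-1) = e^{-c(t-1)}$ still agree. Applying the variation-of-constants formula with initial point $1$ and subtracting yields
\[
v(t) = (d_2 - d_1) \int_1^t e^{-c(t-s)} g\bigl(e^{-c(s-1)}\bigr)  ds.
\]
By $(C_g)(a)$, the ratio $g(\xi)/\xi$ is strictly increasing on $(0, 1]$ and tends to $g'(0) = 0$ at $0$, so $g > 0$ on $(0, 1]$; consequently $v(t) > 0$ for every $t \in (1, 2]$. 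This provides the base case for the comparison.

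The main argument is a "first violation" contradiction on $(1, \omega)$. Define
\[
\tau = \inf\bigl\{t \in (1, \omega) : v(t) \le 0\bigr\},
\]
with $\tau = \omega$ if the set is empty. The base case gives $\tau > 2$; I suppose for contradiction that $\tau < \omega$. Continuity then forces $v(\tau) = 0$ while $v > 0$ on $(1, \tau)$, so $v'(\tau) \le 0$ provided the derivative exists. Since $\tau - 1 \in (1, \omega)$, the hypothesis gives $z^{d_2}(\tau - 1) < 1$, and the inductive strict inequality $z^{d_1}(\tau - 1) < z^{d_2}(\tau - 1)$ (valid because $\tau - 1 > 1$ lies in $(1, \tau)$) places both delayed values in $[0, 1)$, where $g$ is $C^1$ and, by $(C_g)(a)$ together with $g'(\xi) > g(\xi)/\xi > 0$ on $(0,1]$, strictly increasing. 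Differentiating and rearranging,
\[
v'(\tau) = d_1 \bigl[g(z^{d_2}(\tau-1)) - g(z^{d_1}(\tau-1))\bigr] + (d_2 - d_1) g(z^{d_2}(\tau-1)).
\]
The first bracket is $\ge 0$ by strict monotonicity, and the second term is strictly positive since $d_2 > d_1$ and $z^{d_2}(\tau-1) > 0$ (the pointwise bound $(z^d)' \ge -c z^d$ integrates to $z^d(t) \ge e^{-ct} > 0$). Hence $v'(\tau) > 0$, contradicting $v'(\tau) \le 0$, so $\tau = \omega$ and $v > 0$ on $(1, \omega)$.

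The key difficulty is ensuring that at the putative first zero $\tau$ of $v$ the two delayed arguments remain inside the monotone branch $[0, 1]$ of $g$; on $\xi > 1$ the function $g$ is identically zero, so any naive comparison argument would collapse as soon as a solution entered that region. The hypothesis $z^{d_2}(t) < 1$ on $(1, \omega)$ is precisely what prevents $z^{d_2}(\tau-1)$ from escaping, and the inductive chain $z^{d_1}(\tau - 1) < z^{d_2}(\tau - 1) < 1$ produced by $v > 0$ on $(1, \tau)$ automatically transfers the bound to $z^{d_1}$ without requiring a separate a priori estimate on it.
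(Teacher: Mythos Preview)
Your proof is correct and follows essentially the same approach as the paper: establish the strict inequality on $(1,2]$ via the variation-of-constants formula with the shared initial segment, then run a first-contact contradiction using that $g$ is strictly increasing on $[0,1]$ and both delayed values lie there. The only cosmetic differences are your explicit introduction of $v=z^{d_2}-z^{d_1}$ and your decomposition $v'(\tau)=d_1[g(z^{d_2})-g(z^{d_1})]+(d_2-d_1)g(z^{d_2})$ versus the paper's bound $d_2 g(z^2)-d_1 g(z^1)\ge (d_2-d_1)g(z^1)$; both are equivalent.
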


\begin{proof}
	Set $z^{j}=z^{d_{j}}$, $j=1,2$.
	Let $t\in[1,\min\{2,\omega\}]$. The solutions $z^1,z^2$ of 
	\eqref{eqn:Eg} satisfy equation \eqref{inteqn:x} with $T_1=1$, $T_2=\infty$, and  
	$d=d_1$, $d=d_2$, respectively. 
Observe $z^{j}(1)=e^{-c}$ and, for $s\in[1,2]$,  $z^{j}(s-1)=e^{-c(s-1)}$.  Hence
	\[
	z^{j}(t)=e^{-ct}
	+d_{j}\int_{1}^{t}e^{-c(t-s)}
	g \left(e^{-c(s-1)}\right) ds ,
	\qquad
	t\in[1,\min\{2,\omega\}],
	\]
and 
	\[
	z^{2}(t)-z^{1}(t)
	=(d_{2}-d_{1})
	\int_{1}^{t}e^{-c(t-s)}
	g \left(e^{-c(s-1)}\right) ds>0. 
	\]
Therefore, 	 $z^{1}(t)<z^{2}(t)$ for $t\in(1,\min\{2,\omega\}]$, 
and the proof is complete if $\omega\le 2$. 

Assume $\omega>2$, and that there exists a $t_*\in (2,\omega)$
with  $z^{1}(t)<z^{2}(t)$ for $t\in(1,t_{*})$ and
	$z^{1}(t_{*})=z^{2}(t_{*})$. 
As 	$z^{1}(t_{*}-1)<z^{2}(t_{*}-1)<1$, both $z^1,z^2$ are differentiable 
at $t_*$, and 
	\[
	\left(z^{2}-z^{1}\right)'(t_{*})
	=d_{2} g \left(z^{2}(t_{*}-1)\right)
	-d_{1} g \left(z^{1}(t_{*}-1)\right)
	\ge(d_{2}-d_{1}) 
	g \left(z^{1}(t_{*}-1)\right)>0,
	\]
	since $g$ is increasing on $[0,1]$. 
	This contradicts the definition of $t_{*}$ as a first contact
	point, which would require
	$\left(z^{2}-z^{1}\right)'(t_{*})\le 0$.  Therefore such a $t_{*}$
	cannot exist, and $z^{1}(t) < z^{2}(t)$ for all $(1,\omega)$.
\end{proof}

\begin{proposition}\label{prop:zc-below-one}
	Let $z^{c}\colon[0,\infty)\to\mathbb{R}$ be a solution of 
\eqref{eqn:Eg}	with $d=c$, and let
	$z^{c}(t)=e^{-ct}$, for $0\le t\le 1$.
	Then $0<z^{c}(t)<1$ for every $t\ge 1$.
\end{proposition}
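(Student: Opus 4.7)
The plan is to exploit the variation-of-constants representation \eqref{inteqn:x} of \eqref{eqn:Eg} together with $0 \le g \le 1$, using crucially that $d = c$ makes the a priori bound of Proposition~\ref{prop:bounds} coincide exactly with the threshold value $1$.

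First, I would apply \eqref{inteqn:x} with $d=c$ and $\tau = 1$, using the prescribed initial value $z^c(1) = e^{-c}$, to obtain, for every $t \ge 1$,
\[
z^c(t) = e^{-ct} + c \int_1^t e^{-c(t-s)} g\bigl(z^c(s-1)\bigr) \, ds.
\]
The lower bound $z^c(t) > 0$ is then immediate: since $g \ge 0$ by $(C_g)(a)$, the integrand is nonnegative and the first term is strictly positive, so $z^c(t) \ge e^{-ct} > 0$ for every $t \ge 1$.

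For the upper bound I would use $g \le 1$ to compute
\[
z^c(t) \le e^{-ct} + c \int_1^t e^{-c(t-s)} \, ds
= e^{-ct} + 1 - e^{-c(t-1)}
= 1 - e^{-c(t-1)}\bigl(1 - e^{-c}\bigr).
\]
Since $c > 0$ forces $1 - e^{-c} > 0$ and $e^{-c(t-1)} > 0$, this yields $z^c(t) < 1$ for every $t > 1$; at $t = 1$ the strict inequality reduces to the trivial $z^c(1) = e^{-c} < 1$.

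There is essentially no obstacle: the argument is the observation that $z^c$ lies below the solution $t \mapsto 1 - (1 - e^{-c}) e^{-c(t-1)}$ of the envelope ODE $w' = -cw + c$ with $w(1) = e^{-c}$, which monotonically approaches $1$ from below. What makes the proof so short is precisely the borderline case $d = c$: the direct bound $g \le 1$ would only give $z^d(t) \le d/c$ for general $d > c$, which is useless, and that is exactly why the more delicate monotone comparison of Proposition~\ref{prop:ordering} is introduced to handle $d > c$.
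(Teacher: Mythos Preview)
Your proof is correct but takes a different route from the paper. The paper argues by contradiction: assuming a minimal $t_0>1$ with $z^c(t_0)=1$, the first-hit condition forces $(z^c)'(t_0)\ge 0$, while the differential equation gives $(z^c)'(t_0)=-c+cg(z^c(t_0-1))<0$ because $z^c(t_0-1)<1$ and $g<1$ on $[0,1)$. Your argument instead plugs the crude bound $g\le 1$ directly into the integral representation and produces the explicit envelope $z^c(t)\le 1-e^{-c(t-1)}(1-e^{-c})$. Your approach is slightly more informative, since it quantifies the gap to $1$, and it avoids any appeal to differentiability or a contradiction step; the paper's approach, on the other hand, is the template that recurs elsewhere in Section~\ref{sec5} (e.g.\ in Propositions~\ref{prop:ordering} and~\ref{prop:first-hit-v0}), where a first-touch argument with the differential form is the natural tool because a global integral bound of this strength is not available when $d>c$.
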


\begin{proof}
	Positivity for $t\ge 0$ is immediate from the equation and the
	positive history.  
	Suppose, aiming for a contradiction, that there exists a minimal 
	$t_{0}>1$ with $z^{c}(t_{0})=1$.
	
	As $z^{c}(1)=e^{-c}<1$, such a $t_{0}$ would indeed satisfy
	$t_{0}>1$.  
	At this first–hit time we have $(z^{c})'(t_{0})\ge 0$, while the
	differential equation gives
	\[
	(z^{c})'(t_{0})
	=-c z^{c}(t_{0})+c g \left(z^{c}(t_{0}-1)\right)
	=-c+c g \left(z^{c}(t_{0}-1)\right).
	\]
	As $t_{0}-1\in(0,t_{0})$ and $t_{0}$ was the first time
	$z^{c}$ reached~$1$, we have $z^{c}(t_{0}-1)<1$.  By
	$(C_{g})(a)$  this implies $g \left(z^{c}(t_{0}-1)\right)<1$, hence
	$(z^{c})'(t_{0})<0$, contradicting $(z^{c})'(t_{0})\ge 0$.
\end{proof}

Define
\[
\mathcal{D}=\Bigl\{ d>c: 
z^{d}(t)<1 \text{ for all }t>0,  \text{ and }  
z^{d}(t)\to 0 \text{ as } t\to\infty
\Bigr\}.
\]

\begin{proposition}\label{prop:D-interval}
	There exists a critical parameter value $d^{*}>c$ such that $
	\mathcal{D}=(c,d^{*})$.
\end{proposition}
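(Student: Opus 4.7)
The plan is to prove $\mathcal{D}=(c,d^{*})$ by verifying four properties, setting $d^{*}:=\sup\mathcal{D}$: (i) $\mathcal{D}\neq\emptyset$, (ii) $\mathcal{D}$ is bounded above, (iii) $\mathcal{D}$ is downward-closed in $(c,\infty)$, and (iv) $d^{*}\notin\mathcal{D}$. For (ii), a direct lower bound on the integral equation suffices: $z^{d}(2)\ge d\int_{1}^{2}e^{-c(2-s)}g(e^{-c(s-1)})\,ds=dI$ where $I>0$, so any $d>1/I$ forces $z^{d}(2)>1$ and cannot lie in $\mathcal{D}$. For (iii), Proposition \ref{prop:ordering} gives downward monotonicity: if $d\in\mathcal{D}$ and $c<d'<d$, then $0<z^{d'}(t)<z^{d}(t)<1$ on $(1,\infty)$, $z^{d'}(t)=e^{-ct}<1$ on $(0,1]$, and $z^{d'}\to 0$ by squeezing between $0$ and $z^{d}$.

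For (i), I would combine Proposition \ref{prop:zc-below-one} with the attraction result in Proposition \ref{cor-conv}(i). Since the segment $z^{d}|_{[0,1]}=e^{-ct}$ does not depend on $d$, a direct computation gives $z^{d}(t)-z^{c}(t)=(d-c)\int_{1}^{t}e^{-c(t-s)}g(e^{-c(s-1)})\,ds$ on $[1,2]$, so $z^{d}\to z^{c}$ uniformly on $[1,2]$ as $d\to c^{+}$. Setting $M:=\max_{t\in[1,2]}z^{c}(t)<1$ (from Proposition \ref{prop:zc-below-one}), we obtain $\max_{[1,2]}z^{d}<(M+1)/2$ for $d$ sufficiently close to $c$; and since $g(\xi_{1})/\xi_{1}=c/d\to 1$ while $g(\cdot)/\cdot$ is strictly increasing to $1$ at $\xi=1$, we have $\xi_{1}\to 1$, so $(M+1)/2<\xi_{1}$ for such $d$. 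Thus $0<z^{d}_{2}\ll\hat{\xi}_{1}$, and Proposition \ref{cor-conv}(i) then yields $\Gamma(t,z^{d}_{2})<\hat{\xi}_{1}$ for all $t\ge 0$ together with $z^{d}(t)\to 0$; combined with $z^{d}(t)=e^{-ct}<1$ on $(0,1]$, this places $d$ in $\mathcal{D}$.

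For (iv) I would argue by contradiction. If $d^{*}\in\mathcal{D}$, then $z^{d^{*}}\to 0$, and by continuity plus decay the maximum $M^{*}:=\max_{t\ge 0}z^{d^{*}}(t)<1$ is attained; pick $T$ so large that $z^{d^{*}}_{T}\ll\hat{\xi}_{1}/2$ and set $\eta:=\tfrac{1}{2}\min\{1-M^{*},\,\xi_{1}\}$. A bootstrap Gronwall argument on the integral equation \eqref{inteqn:x} (using $\|g'\|_{[0,1]}$ to Lipschitz-bound $g$ as long as solutions stay in $[0,1)$) gives $\sup_{t\in[0,T]}|z^{d}(t)-z^{d^{*}}(t)|<\eta$ whenever $d-d^{*}$ is small enough: if $\tau\in(1,T]$ were a first crossing of $(M^{*}+1)/2$ by $z^{d}$, then on $[0,\tau]$ both solutions remain strictly below $1$, Gronwall gives $|z^{d}(\tau)-z^{d^{*}}(\tau)|<\eta$, hence $z^{d}(\tau)<M^{*}+\eta<(M^{*}+1)/2$, contradicting the definition of $\tau$. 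Consequently $z^{d}(t)<1$ on $[0,T]$ and $z^{d}_{T}\ll\hat{\xi}_{1}$, and Proposition \ref{cor-conv}(i) yields $z^{d}_{t}<\hat{\xi}_{1}<1$ for $t\ge T$ together with $z^{d}\to 0$, placing $d\in\mathcal{D}$ in contradiction with $d^{*}=\sup\mathcal{D}$. The main obstacle is precisely this step: the semiflow $\Gamma$ fails to be continuous because $g$ jumps at $1$, so continuous dependence on $d$ must be propagated by the bootstrap across $[0,T]$ while keeping $z^{d}$ strictly below $1$, and it is the strict interior maximum $M^{*}<1$ of $z^{d^{*}}$ that makes the bootstrap close.
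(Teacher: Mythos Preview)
Your proposal is correct and follows essentially the same four-step structure as the paper: nonemptiness via uniform convergence $z^{d}\to z^{c}$ on $[1,2]$ combined with $\xi_{1}(d)\to 1$ and Proposition~\ref{cor-conv}(i); downward-closure via Proposition~\ref{prop:ordering}; boundedness via the explicit lower bound on $z^{d}(2)$; and $d^{*}\notin\mathcal{D}$ via continuous dependence on $[0,T]$ followed by Proposition~\ref{cor-conv}(i). Your bootstrap argument in step~(iv) is in fact more carefully stated than the paper's, which simply asserts ``it is easy to show'' the uniform convergence $\sup_{[0,T]}|z^{d^{*}+\varepsilon}-z^{d^{*}}|\to 0$; you correctly identify that the strict interior bound $M^{*}<1$ keeps the perturbed solution away from the discontinuity of $g$, which is precisely what makes the Gronwall estimate valid. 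One minor point worth making explicit: the equilibrium $\xi_{1}$ depends on $d$, and in step~(iv) you need $z^{d}_{T}\ll\hat{\xi}_{1}(d)$ (not $\hat{\xi}_{1}(d^{*})$) to invoke Proposition~\ref{cor-conv}(i) for the semiflow with parameter $d$; this follows since $\xi_{1}(d)$ is increasing in $d$, so $z^{d}_{T}\ll\hat{\xi}_{1}(d^{*})\le\hat{\xi}_{1}(d)$.
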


\begin{proof}
	\textit{Step 1.}  $\mathcal{D}\neq\emptyset$. 
	 
	For every $k\in\mathbb{N}$ put
$	d_k  = c+(1/k)$,  $	z^{k} = z^{d_k}$. 
	Let $\zeta_{k}\in(0,1]$ be the unique solution of 
	$	-c\xi+d_k g(\xi)=0$, that is, $	\frac{g(\xi)}{\xi}=\frac{c}{d_k}$.
	Assumption $(C_g)(a)$ states that $\xi\mapsto g(\xi)/\xi$ is strictly
	increasing on $(0,1]$; hence $\zeta_{k}$ is well–defined and
	$\zeta_{k}\to 1$ as $k\to\infty$.

	Let $t\in[1,2]$. As  	$z^{k}(s-1)=z^c(s-1)=e^{-c(s-1)}$ for  $s\in[1,2]$, 
from \eqref{inteqn:x} 	
$$
		z^{k}(t)-z^c(t)=(d_k-c)  \int_{0}^{t-1} e^{-c(t-1-s)}g\left(e^{-cs}\right) ds.
		$$
	Thus 
	\[
	\bigl|z^{k}(t)-z^{c}(t)\bigr|
	\le \frac1k\int_{0}^{t-1}g(e^{-cs}) ds
	\le \frac1k, 
	\qquad t\in[1,2],
	\]
and $z^{k}\to z^{c}$ as $k\to\infty$, uniformly in $t\in [1,2]$.
	
	As $z^{c}(t)<1$ for $t\ge1$ and $\zeta_{k}\to1$,
	there is a $k_{0}\in\mathbb N$ such that, for each $k\ge k_0$, 
	\[
	z^{c}(t)+\frac1k<\zeta_{k}
	\quad\text{for all }t\in[1,2], 
	\]
and consequently $0\ll z^k_2\ll \hat\zeta_k$ if $k\ge k_0$. 	
Proposition \ref{cor-conv} can be applied to conclude 
$z^k(t)\in (0,1)$ for all $t>0$, and 
	$z^{k}(t)\to0$ as $t\to\infty$, provided $k\ge k_0$. 
	Thus $d_k\in\mathcal D$ whenever $k\ge k_{0}$, so $\mathcal D\neq\emptyset$.

	\textit{Step 2.  $\mathcal D$ is an interval.}
	
Suppose $d_2\in \mathcal{D}$. Then $z^{d_{2}}(t)<1$  for $t>0$, and $z^{d_2}(t)\to 0$ as $t\to\infty$.
  By Proposition~\ref{prop:ordering},   
  $c<d_{1}<d_{2}$ implies $z^{d_{1}}(t)<z^{d_{2}}(t)$ for all $t\in(1,\infty)$, and then   
	$z^{d_{1}}(t) \to 0$ as $t\to\infty$. 
	Hence $d_{1}\in\mathcal D$, and  $\mathcal D$ is an interval.
	
	\textit{Step 3.  $\mathcal D$ is bounded from above.}
	
	From \eqref{inteqn:x},  for every $d>c$, we have
	$$
		z^{d}(2)
	=e^{-c(2-1)}z^{d}(1)+d\int_{1}^{2}e^{-c(2-s)}g\left(z^{d}(s-1)\right)ds
=e^{-2c}+d e^{-c}\int_{0}^{1}e^{cs}g\left(e^{-cs}\right) ds .
	$$
Hence 	$z^{d}(2)\to\infty$  as $d\to\infty$.
So $d\notin\mathcal D$ for all large $d$.
 Therefore $\mathcal D$ is bounded from above.

	\textit{Step 4.  The value $d^{*}=\sup\mathcal D$ does not belong to $\mathcal D$.}
	
	Assume, towards a contradiction, that $d^{*}\in\mathcal D$.
	Then $z^{d^{*}}$ satisfies
	$0< z^{d^*}(t)<1$ for $t>0$, $z^{d^*}(t)\to 0$ as $t\to\infty$.

	For every $\varepsilon\ge0$ let
	\(
	\zeta_{\varepsilon}\in(0,1]
	\)
	be the unique solution of
	\(
	-c\xi+(d^{*}+\varepsilon)g(\xi)=0
	\). 
The map $\varepsilon\mapsto\zeta_{\varepsilon}$ is continuous and
	$\zeta_{0}\in (0,1)$. 
Then we can choose $\varepsilon_{0} > 0$ such that 
$\zeta_\varepsilon> (1/2)\zeta_0$ for all $\varepsilon\in(0,\varepsilon_{0}]$. 

By $0< z^{d^*}(t)<1$ for $t>0$ and  $\lim_{t\to\infty}z^{d^*}(t)=0$, 
there exists a $T>1$ such that 
$$
z^{d^*}_T\ll \frac{1}{4}\hat\zeta_0 .
$$

By the integral equation \eqref{inteqn:x} for $z^{d^*+\varepsilon}$, it is easy 
to show that
$$
		\sup_{t\in[0,T]}\bigl|z^{d^*+\varepsilon}(t)-z^{d^*}(t)\bigr|
		\to 0 \text{ as }\varepsilon\to 0.
$$
Hence, and by the choice of $\varepsilon_0$, there exists an $\varepsilon_1\in (0,\varepsilon_0)$ so that 
$$
z^{d^*+\varepsilon_1}(t)\in (0,1) \text{ for all }t\in (0,T], 
\text{ and } z^{d^*+\varepsilon_1}_T\ll \hat\zeta_{\varepsilon_1}. 
$$
An application of Proposition \ref{cor-conv} yields  
$z^{d^*+\varepsilon_1}(t)\in (0,1)$ for all $t>0$, and 
$z^{d^*+\varepsilon_1}(t)\to 0$ as $t\to\infty$. 
	That is,  
	$d^{*}+\varepsilon_{1}\in\mathcal{D}$, a 
	contradiction to the definition of $d^{*}=\sup\mathcal D$.
	
	Thus $d^{*}\notin\mathcal D$, and together with Steps 1--3 we have proved $d^{*}>c$ and 
$	\mathcal D=(c,d^{*})$,  completing the proof.
\end{proof}

\begin{proposition}\label{prop:first-hit-v0}
	For each $d_0>d^{*}$  there exists a time
	$\tau_{0}=\tau_{0}(d_0)\in(1,\infty)$
	such that
	$v^{d_0}(t)<1$ for $0< t<\tau_{0}$, and $v^{d_0}(\tau_{0})=1$.
\end{proposition}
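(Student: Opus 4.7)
The plan is a proof by contradiction. Assume $z^{d_0}(t)<1$ for every $t>0$. First, I propagate this assumption in the parameter: Proposition~\ref{prop:ordering} applied with $\omega=\infty$ gives $z^{d}(t)<z^{d_{0}}(t)<1$ on $(1,\infty)$ for every $d\in(c,d_{0})$, and combined with $z^{d}(t)=e^{-ct}<1$ on $(0,1]$ this yields $z^{d}(t)<1$ for all $t>0$ and every $d\in(c,d_{0}]$. In particular this holds for $d=d^{*}$ and for every $d\in[d^{*},d_{0}]$.

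Next, for each $d_{1}\in[d^{*},d_{0}]$, since $d_{1}\notin\mathcal{D}$ by Proposition~\ref{prop:D-interval} while the condition $z^{d_{1}}(\cdot)<1$ now holds, the definition of $\mathcal{D}$ forces $z^{d_{1}}(t)\not\to 0$ as $t\to\infty$. The forward orbit lies in the compact set $C_{0,1}^{2d_{1}}$ of Proposition~\ref{prop:bounds} inside the continuity region of $\Gamma$, so $\omega_{\Gamma}(z^{d_{1}}_{1})$ is a nonempty compact invariant set; it cannot contain $\hat{0}$ (which is locally asymptotically stable since the linearization at $\hat{0}$ is $x'=-cx$, using $g'(0)=0$), and by Proposition~\ref{cor-conv}(i) combined with invariance of $\omega$-limit sets, it contains no $\varphi$ with $0<\varphi<\hat{\xi}_{1}(d_{1})$.

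To derive a contradiction, I would construct a ghost trajectory at the critical parameter $d^{*}$ that violates the structural constraint above. Pick $d_{n}\uparrow d^{*}$ in $\mathcal{D}$; then $z^{d_{n}}(t)\to 0$, so the first time $T_{n}$ with $z^{d_{n}}_{T_{n}}\le\hat{\xi}_{1}(d^{*})/2$ is finite. If $(T_{n})$ had a bounded subsequence, continuous dependence of $z^{d}$ on $d$ on a compact interval (Gronwall applied to~\eqref{inteqn:x}) would give some $T^{*}$ with $z^{d^{*}}_{T^{*}}\le\hat{\xi}_{1}(d^{*})/2$, forcing $z^{d^{*}}(t)\to 0$ by Proposition~\ref{cor-conv}(i) and contradicting the previous paragraph at $d=d^{*}$. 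Hence $T_{n}\to\infty$, and the shifted solutions $w^{n}(t):=z^{d_{n}}(T_{n}+t)$ are uniformly bounded in $[0,1]$ and uniformly Lipschitz; by Arzel\`a--Ascoli and a diagonal extraction, a subsequence converges on compact subsets of $\R$ to an entire solution $w^{\infty}\colon\R\to[0,1]$ of~\eqref{eqn:Eg} with parameter $d^{*}$, satisfying $w^{\infty}_{0}\le\hat{\xi}_{1}(d^{*})/2$. An $\varepsilon/2$-argument comparing $z^{d_{n}}$ with $z^{d^{*}}$ on appropriately matched (growing) time intervals should then identify $w^{\infty}_{t}$ with an element of $\omega_{\Gamma}(z^{d^{*}}_{1})$ for each $t\in\R$, contradicting the structural constraint.

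The main obstacle is precisely this final identification step, since the naive continuous dependence of $z^{d}$ on $d$ only holds on fixed compact intervals while here $T_{n}\to\infty$ and $d_{n}\to d^{*}$ simultaneously, so the comparison requires a careful double-limit argument. An alternative route that avoids this identification would invoke a Poincar\'e--Bendixson theorem for scalar monotone positive-feedback delay equations, reducing $\omega_{\Gamma}(z^{d_{1}}_{1})$ to the single equilibrium $\{\hat{\xi}_{1}(d_{1})\}$ for every $d_{1}\in[d^{*},d_{0}]$, and then reaching a contradiction via a transversality argument showing that the fixed initial segment $(e^{-c(s+1)})_{s\in[-1,0]}$ can lie on the codimension-one stable manifold $W^{s}(\hat{\xi}_{1}(d))$ for at most discretely many values of $d$, not for a whole interval.
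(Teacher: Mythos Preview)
Your proposal has a genuine gap at exactly the point you flag: to place the Arzel\`a--Ascoli limit $w^{\infty}$ inside $\omega_{\Gamma}(z^{d^{*}}_{1})$ you would need $z^{d^{*}}_{T_{n}}-z^{d_{n}}_{T_{n}}\to 0$, but Gronwall-type estimates on~\eqref{inteqn:x} give error bounds that grow with the length of the time interval, and here $T_{n}\to\infty$. The monotonicity $z^{d_{n}}(t)<z^{d^{*}}(t)$ from Proposition~\ref{prop:ordering} only gives a one-sided bound in the limit, not equality. Your alternative via Poincar\'e--Bendixson plus a transversality argument for the family $d\mapsto W^{s}(\hat\xi_{1}(d))$ is plausible in spirit but would require substantial machinery (Mallet-Paret--Sell, smooth parameter-dependence of stable manifolds, and an explicit transversality computation) that is neither developed in the paper nor elementary.

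The paper's argument is structurally different and avoids any limit in $d$. Instead of working at the endpoint $d^{*}$, it fixes any $d_{1}\in(d^{*},d_{0})$ and shows directly that $d_{1}\in\mathcal{D}$, contradicting Proposition~\ref{prop:D-interval}. The key observations are: (1) from $z^{d_{0}}(\cdot)<1$ and $z^{d_{0}}\not\to 0$, Proposition~\ref{cor-conv} forces $z^{d_{0}}(t)-\xi_{1}$ to oscillate, so $z^{d_{0}}_{t}-\hat\xi_{1}\in S$ for all $t\ge 1$; (2) pick $T>2$ and an intermediate $\psi$ with $z^{d_{1}}_{T}\ll\psi\ll z^{d_{0}}_{T}$, and let $z$ solve~\eqref{eqn:Eg} with parameter $d_{0}$ (not $d_{1}$) and $z_{T}=\psi$; (3) monotonicity of $\Gamma$ (same parameter $d_{0}$) gives $z<z^{d_{0}}$, and since $z_{T}-\hat\xi_{1}\ll z^{d_{0}}_{T}-\hat\xi_{1}\in S$, Proposition~\ref{prop:conv} yields $z(t)\to 0$; (4) a direct first-contact comparison between $z$ (parameter $d_{0}$) and $z^{d_{1}}$ (parameter $d_{1}<d_{0}$) shows $z^{d_{1}}(t)<z(t)$ for all $t>T$, because at a first touching time $t_{0}$ one has $(z-z^{d_{1}})'(t_{0})\ge d_{0}g(z(t_{0}-1))-d_{1}g(z^{d_{1}}(t_{0}-1))>0$. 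Hence $z^{d_{1}}(t)\to 0$, i.e.\ $d_{1}\in\mathcal{D}$.

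The idea you are missing is the auxiliary solution $z$ carrying the \emph{larger} parameter $d_{0}$ but with initial data squeezed between the two trajectories: one comparison then comes from semiflow monotonicity, the other from the strict inequality $d_{0}>d_{1}$ in the feedback term, and convergence of $z$ to $0$ comes from the separatrix property of $z^{d_{0}}$ via Proposition~\ref{prop:conv}. No limiting procedure in $d$ is needed.
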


\begin{proof}
	Suppose that 
	$v^{d_0}(t)<1$ for every $t>0$. As $d_0\notin\mathcal{D}$, 
 $v^{d_0}(t)\not\to 0$ as $t\to\infty$.  
Then, by Proposition \ref{cor-conv}, for any $t\ge 1$, we cannot have $v^{d_0}_t>\hat\xi_1$ or $v^{d_0}_t<\hat\xi_1$. Consequently, 
$v^{d_0}(t)-\xi_1$ oscillates around 0 on $[0,\infty)$. 
Recalling the definition of $S$ from Section \ref{sec3} for 
equation \eqref{eqn:Eh} with $a=c$, $b=d_0$, $\xi_-=-\xi_1(d_0)$, and $h$ given in 
\eqref{def-h}, one finds   
 $v^{d_0}_t-\hat\xi_1\in S$ for all $t\ge 1$. 

For a $d_1\in(d^*,d_0)$, by Proposition \ref{prop:ordering}, we have $v^{d_1}(t)<v^{d_0}(t)<1$ for all $t>1$.   
Hence $0\ll v^{d_1}_t\ll v^{d_0}_t$ for all $t>2$. 

Fix a $T>2$ and choose a $\psi\in C^+$ so that 
$v^{d_1}_T\ll \psi \ll v^{d_0}_T$. 

Let $z:[T-1,\infty)\to\mathbb{R}$ be the solution of 
\eqref{eqn:Eg} on $[T-1,\infty)$ with $d=d_0$ and with initial value 
$z_T=\psi$. 
From $z_T=\psi\ll v^{d_0}_T$, by the monotonicity of the 
semiflow $\Gamma$ restricted to $[-1,\infty)\times \{\varphi\in C: 
\varphi(s)\in (0,1),\ s\in[-1,0]\}$, it follows that 
$z(t)<v^{d_0}(t)$ for all $t\ge T-1$. Moreover, 
as $z_T-\hat\xi_1\ll v^{d_0}_T-\hat\xi_1\in S $, Proposition \ref{prop:conv} 
can be applied to conclude that $z(t)-\xi_1\to -\xi_1$, that is, 
$z(t)\to 0$ as $t\to\infty$. 

In order to compare $v^{d_1}$ and $z$ on $[T-1,\infty)$, observe that 
$v^{d_1}_T\ll \psi=z_T $, and, for all $t>T$, 
$$
\left(z-v^{d_1}\right)'(t)=-c(z(t)-v^{d_1}(t))+d_0g(z(t-1))-d_1g(v^{d_1}(t-1)).
$$
Hence, for a minimal $t_0>T$ with $z(t_0)=v^{d_1}(t_0)$, 
by using $d_0 g(z(t_0-1))> d_0 g(v^{d_1}(t_0-1))> d_1g(v^{d_1}(t_0-1))$, 
it follows that $(z-v^{d_1})'(t_0)>0$, a contradiction to the minimality of $t_0$. 
Then, $v^{d_1}(t)<z(t)$ for all $t>T-1$, and consequently $v^{d_1}(t)\to 0$ 
as $t\to\infty$. Hence, $d_1\in \mathcal{D}$ follows, a contradiction 
to $d_1>d^*$ and Proposition \ref{prop:D-interval}. 
\end{proof}

Now we define two special solutions  of 
equation \eqref{eqn:Eg}. 

For a $d_0>d^*$, let $w^0=v^{d_0}:[0,\infty)\to \mathbb{R}$ be the solution of \eqref{eqn:Eg} with $d=d_0$ and with initial function $w^0(t)=e^{-ct}$, 
$t\in[0,1]$. By Proposition \ref{prop:first-hit-v0}, there exists a 
$\tau_0=\tau_0(d_0)>1$ such that $w^0(t)<1$ for all $t\in(0,\tau_0)$, 
and   $w^{0}(\tau_{0})=1$. 

For a $d_1>d^*$, define 
\[
	\tau_{1}=1+\dfrac1c
	\ln \left(\frac{d_1}{c}(1-e^{-c})+e^{-c}\right)
	= 1+\dfrac1c
	\ln \left(\tfrac{d_1}{c}-e^{-c} \left(\tfrac{d_1}{c}-1\right)\right), 
	\]
and let $w^1:[0,\infty)\to \mathbb{R}$ be the solution of \eqref{eqn:Eg} with $d=d_1$ and with initial function 
$$
w^{1}(t)=
		\frac{d_1}{c}\left(1-e^{-ct}\right)+e^{-ct} \text{ for } 0\le t\le 1.		
$$
It is a straightforward calculation to show that $w^1(t)>1$ for $t\in(0,1]$, 
and hence
$$
	w^{1}(t)=
		e^{-c(t-1)} \left[\frac{d_1}{c}\left(1-e^{-c}\right)+e^{-c}\right] 
		\text{ for } 1<t\le\tau_{1}.
$$		
Then 
$$
w^0(0)=1, \ w^1(t)>1 \text{ for all } t\in (0,\tau_1), \ w^1(\tau_1)=1.
$$

Recall the definition of the solution $x^+:\mathbb{R}\to \mathbb{R}$
of equation \eqref{eqn:Eg} in Proposition \ref{prop:W-Eg}.  
By using the auxiliary functions $w^0,w^1$, the next proposition will give  
 information on $x^+$ on the interval $[t_2+1,\infty)$. 


\begin{proposition}\label{prop:envelope}
Let $d>d^{*}$ be fixed, and choose  $d_{0}\in(d^{*},d)$, 
$d_1=d$. 
	\begin{itemize}
	\item[(i)]
If $t_{*}\in\mathbb{R}$ and $\omega>0$ are given such that 
		$x^{+}(t_{*})=1$, 
		$x^{+}(t)<1$ for $t_{*}<t<t_{*}+\omega$, 
		then  $\omega\le\tau_{0}$ and
		$x^{+}(t_{*}+t) \ge w^{0}(t)$		for all $t\in[0,\omega]$.
		\item[(ii)]
If $t_{*}\in\mathbb{R}$ and $\omega>0$ are given such that 
		$x^{+}(t_{*})=1$, 
		$x^{+}(t)>1$ for $t_{*}<t<t_{*}+\omega$, 
		then  $\omega\le\tau_{1}$ and
		$x^{+}(t_{*}+t) \le w^{1}(t)$		for all $t\in[0,\omega]$.	
\end{itemize}
\end{proposition}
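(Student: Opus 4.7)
The plan is to turn each claim into a comparison argument: first obtain a bound on the unit interval $[0,1]$ via an integrating factor (using only $0 \le g \le 1$), and then propagate the bound forward one unit at a time using monotonicity of $g$ on $[0,1]$ together with the piecewise-linear structure of $w^0$ and $w^1$. Throughout, set $y(t) = x^+(t_* + t)$, so $y(0) = 1$.

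For part (i), since $y(t) < 1$ on $(0,\omega)$, I will first show $y(t) \ge w^0(t) = e^{-ct}$ on $[0,1]$: from $y'(t) = -c\, y(t) + d\, g(x^+(t_* + t - 1))$ and $g \ge 0$ one has $(e^{ct} y(t))' \ge 0$, so $y(t) \ge e^{-ct}$. Next, I induct on $k \in \mathbb{N}$ to prove $y \ge w^0$ on $[0, \min(\omega,\tau_0)] \cap [0,k+1]$. In the inductive step, setting $z(t) = y(t) - w^0(t)$, one has
$$
z'(t) = -c\, z(t) + d\, g(y(t-1)) - d_0\, g(w^0(t-1)).
$$
The key observation is that for $t-1 \in (0, \min(\omega,\tau_0))$, both $y(t-1)$ and $w^0(t-1)$ lie in $[0,1)$ where $g$ is strictly increasing (a consequence of $(C_g)(a)$), so $g(y(t-1)) \ge g(w^0(t-1))$. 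Combined with $d > d_0$ and $g \ge 0$, this gives $d\, g(y(t-1)) \ge d_0\, g(w^0(t-1))$, hence $z'(t) \ge -c\, z(t)$; since $z(k) \ge 0$, this propagates to $z \ge 0$ on the next unit step. The bound $\omega \le \tau_0$ then follows by contradiction: if $\omega > \tau_0$, evaluating the just-proved inequality at $t = \tau_0$ forces $y(\tau_0) \ge w^0(\tau_0) = 1$, contradicting $y(\tau_0) < 1$.

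For part (ii), with $y(t) > 1$ on $(0,\omega)$, the argument is cleaner because $g$ vanishes strictly above $1$. On $[0,1]$, using $g \le 1$ in the integrating factor argument yields $y(t) \le e^{-ct} + (d/c)(1 - e^{-ct}) = w^1(t)$ since $d_1 = d$. For $t \in [1,\omega]$, the hypothesis forces $g(y(t-1)) = 0$ almost everywhere, so $y$ satisfies $y'(t) = -c\, y(t)$ and thus $y(t) = y(1)\, e^{-c(t-1)}$. By construction $w^1$ obeys the same decay $w^1(t) = w^1(1)\, e^{-c(t-1)}$ on $[1,\tau_1]$ because $w^1(s) > 1$ on $(0,1]$ makes $g(w^1(s-1))$ vanish on $(1,\tau_1)$. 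Since $y(1) \le w^1(1)$, the inequality $y \le w^1$ extends to $[1, \min(\omega,\tau_1)]$, and again $\omega \le \tau_1$ follows because otherwise $y(\tau_1) > 1 = w^1(\tau_1) \ge y(\tau_1)$, a contradiction.

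The main technical care will be in handling boundary points where $g$ jumps from $1$ to $0$ at $\xi = 1$: the strict hypotheses $y < 1$ on $(0,\omega)$ in part (i) and $y > 1$ on $(0,\omega)$ in part (ii) place us safely on the monotone (or vanishing) side of the discontinuity except at isolated points that are harmless inside the differential inequality. The crucial ingredient that makes $w^0$ an actual \emph{lower} envelope in part (i) is the strict inequality $d_0 < d$; this is exactly what gives the slack $d\, g(y(t-1)) \ge d_0\, g(w^0(t-1))$ used in the induction. In contrast, part (ii) takes $d_1 = d$ and instead draws its slack from $g \equiv 0$ above $1$.
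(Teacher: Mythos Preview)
Your proposal is correct and takes essentially the same approach as the paper: establish the comparison on $[0,1]$ via the integrating factor using only $0\le g\le 1$, then propagate it forward using monotonicity of $g$ on $[0,1]$ together with $d>d_0$ (for part (i)) or $g\equiv 0$ above $1$ with $d_1=d$ (for part (ii)), and deduce $\omega\le\tau_0$ (resp.\ $\omega\le\tau_1$) by contradiction at the endpoint. The only cosmetic difference is that the paper phrases the propagation via the integral equation \eqref{inteqn:x} and a first-contact-time argument, whereas you use the differential inequality $z'\ge -cz$ and step-by-step induction on unit intervals; these are equivalent, and your remark about the isolated boundary point $t-1=0$ (where $y=1$) being harmless in the a.e.\ differential inequality is exactly the right observation. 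One small clarification worth making explicit in your write-up: in the inductive step of part (i), the inequality $g(y(t-1))\ge g(w^0(t-1))$ requires not only that both arguments lie in $[0,1)$ but also the inductive hypothesis $y(t-1)\ge w^0(t-1)$, which you are implicitly invoking.
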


\begin{proof}
	Define 
	$x(t)=x^{+}(t_{*}+t)$, $t\in\mathbb{R}$.
	Then $x$ satisfies \eqref{eqn:Eg}, and 
	$x(0)=x^{+}(t_{*})=1$, $x(t)<1$ for $0<t<\omega$.
	
If $t\in [0,1]$, then $w^0(t)=e^{-ct}$. From \eqref{inteqn:x} 
for $x$, by $(C_g)(a)$ and $x(0)=1$, we find
$$
x(t)=e^{-ct}1 + d\int_{0}^{t} e^{-c(t-s)} 
g(x(s-1))  ds	\ge e^{-ct}=w^0(t).
$$
In particular, in case $\omega\le 1$, the statement is true. 

If $t\in (1,\min\{2,\omega,\tau_0\})$, then the integral equation  
\eqref{inteqn:x} 
with $T_0=1$,  $x(1)\ge e^{-c}$, $d>d_0$, $g(x(s-1))\ge g(w^0(s-1))$ for $s\in [1,2]$ yield
$x(t)>w^0(t).$ 

If $\omega>1$ and $x(t)>w^0(t)$ is not satisfied for all 
$t\in (1,\min\{\omega,\tau_{0}\})$, then  there exists a $T>1$ with 
$T<\min\{\omega,\tau_{0}\}$ such that 
$x(t)>w^0(t)$ for all $t\in (1,T)$ and $x(T)=w^0(T)$. 

By $x(1)\ge e^{-c}$, $d>d_0$, and $g(x(s-1))\ge g(w^0(s-1))$ for $s\in [1,T],$ and the integral equation \eqref{inteqn:x}  applied for $x$ and $w^0$, imply 
$x(T)>w^0(T),$ a contradiction. 

Consequently,  $x(t)\ge w^0(t)$ holds for all 
$t\in [0,\min\{\omega,\tau_{0}\}]$. 
Hence, in particular  $\omega\le\tau_0$ follows. 
This completes the proof of (i).  Part (ii) is shown analogously. 
\end{proof}

\begin{corollary}\label{cor:band}
	If $d>d_0>d^*$, and  $m_{0}=\min\{\xi_1, \min_{t\in[0,\tau_{0}]}w^{0}(t)\}$, 
	$m_{1}=d/c$, then $x^+(t)\in[m_0,m_1]$
	for all $t\in\mathbb{R}$.
\end{corollary}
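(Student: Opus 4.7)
My plan is to treat the upper and lower bounds separately. The upper bound $x^+(t)\le d/c = m_1$ is already contained in Proposition~\ref{prop:W-Eg}(i), which asserts $x^+(\mathbb{R})\subset(0,d/c)$. The substantive task is to verify the lower bound $x^+(t)\ge m_0$ for every $t\in\mathbb{R}$.

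I partition $\mathbb{R}$ into three regions according to the behaviour of $x^+$ relative to the level $1$. On $(-\infty,t_1]$, Proposition~\ref{prop:W-Eg}(i) gives that $x^+$ is strictly increasing with $\lim_{t\to-\infty}x^+(t)=\xi_1$, so $x^+(t)>\xi_1\ge m_0$. On $[t_1,t_2]$ one has $x^+(t)\ge 1>\xi_1\ge m_0$, since $x^+(t_1)=x^+(t_2)=1$ and $x^+(t)>1$ on $(t_1,t_2)$. The only nontrivial region is therefore $t\ge t_2$, with $x^+(t_2)=1$.

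For $t>t_2$ I consider the open set $U=\{t>t_2:x^+(t)\ne 1\}$ and write it as the disjoint union of its maximal open intervals. On any component where $x^+>1$ the lower bound is trivial. On a maximal component $(a,b)\subset U$ on which $x^+<1$, continuity forces $x^+(a)=1$ (with $a=t_2$ allowed). I apply Proposition~\ref{prop:envelope}(i) with $t_*=a$: choosing $\omega=\tau_0+1$ would contradict the conclusion $\omega\le\tau_0$, so $b<\infty$ and $b-a\le\tau_0$; the envelope estimate then yields
\[
x^+(a+s)\ge w^0(s)\ge \min_{s\in[0,\tau_0]}w^0(s)\ge m_0
\qquad\text{for all } s\in[0,b-a].
\]
Combining the three regions gives $x^+(t)\in[m_0,m_1]$ for all $t\in\mathbb{R}$.

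The main technical obstacle is the structure of $x^+$ on $(t_2,\infty)$, since a priori the coincidence set $\{x^+=1\}$ could be intricate. The decomposition into maximal components, together with the uniform lower envelope supplied by Proposition~\ref{prop:envelope}(i) on each bounded excursion below~$1$, resolves this cleanly. The exclusion of an unbounded component on which $x^+$ stays below~$1$ is precisely where the hypothesis $d>d_0>d^*$ (through the finite threshold $\tau_0$ in Proposition~\ref{prop:envelope}(i)) is essential.
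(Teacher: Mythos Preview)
Your proof is correct and follows essentially the same approach as the paper, which states the corollary without explicit proof immediately after Proposition~\ref{prop:envelope}: the upper bound comes from Proposition~\ref{prop:W-Eg}(i), and the lower bound on $[t_2,\infty)$ is obtained by applying Proposition~\ref{prop:envelope}(i) on each maximal excursion below the level~$1$, while the monotonicity of $x^+$ on $(-\infty,t_1]$ and the inequality $x^+\ge 1$ on $[t_1,t_2]$ handle the remaining range.
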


	Suppose condition $(C_{f_n})(a)(b)$ in addition to $(C_g)(a)$. 
	Fix $d>d_{0}>d^{*}$. 
	Define $k_{1}=4+2d\|g'\|_{[0,1]}$, and choose $\Delta\in(0,1)$ so that  $c\Delta<\tfrac12 g(e^{-c\Delta})$, and set
	$k_{2}=2+\Delta k_{1}$.
There exists a $\delta\in(0,1)$ with $\delta<m_{0}/4$ such that
	\begin{enumerate}
		\item[(1)] $\displaystyle \delta<\frac{1}{k_{1}}(d-d_{0}) g \left(\frac{m_{0}}{2}\right)$,
		\item[(2)] $\displaystyle \delta<\frac{d-d_{0}}{2k_{1}} g(e^{-c\Delta})$,
		\item[(3)] $\displaystyle 
		\delta<\frac{d-d_{0}}{k_{2}} e^{-c(1+\Delta)}
		\int_{1}^{1+\Delta} e^{cs} g\left(e^{-c(s-1)}\right) ds,$
		\item[(4)] $1+\delta<\dfrac{d}{c}$ and $\delta<\dfrac{c}{2}$.
	\end{enumerate}	
	Moreover, by condition $(C_{f_n})(a)(b)$, there exists an $n_{0}\in\mathbb{N}$ such that for all $n\ge n_{0}$,
	\begin{enumerate}
		\item[(5)] $|a_{n}-c|<\delta$, $|b_{n}-d|<\delta$, and $d|a_{n}-c|<\delta$,
		\item[(6)] $b_{n} \|f_{n}\|_{[1+\delta,\infty)}<\delta$,
		\item[(7)] $d \|f_{n}-g\|_{[0,1-\delta]}<\delta$,
		\item[(8)] $1+\delta<\dfrac{b_{n}}{a_{n}}<2 \dfrac{d}{c}$,
		\item[(9)] $a_{n}>\dfrac{c}{2}$.
	\end{enumerate}
Notice that $\delta$ can be chosen arbitrarily small.


Recall the solutions $y^{+,n}$, $y^{-,n}$ of \eqref{eqn:Efn} 
given in Proposition \ref{prop:W-Efn}.

\begin{proposition}\label{prop:interval-exit}
Suppose conditions $(C_g)(a)$ and $(C_{f_n})(a)(b)$, and let 
$d>d_{0}>d^{*}$. Choose
 $\delta>0$ and $n_0\in\mathbb{N}$ so that inequalities 
 (1)--(9) above are satisfied. Let $n\ge n_0$.
	\begin{itemize}
\item[(i)] If $I\subset\mathbb{R}$ is a maximal open interval such that 
$t_{*}=\inf I\ge t_{2}$,  $y^{+,n}(t_{*})=1-\delta$, and 
$y^{+,n}(t)<1-\delta$ for all $t\in I$, then $t_{**}=\sup I<t_{*}+\tau_{0}$, $y^{+,n}(t_{**})=1-\delta$, and 
$y^{+,n}(t)>\dfrac{m_{0}}{2}$ for every $t\in I$.
\item[(ii)] There exists a constant $\nu_{1}>1$  such that
if $I\subset\mathbb{R}$ is a maximal open interval with 
$t_{*}=\inf I\ge t_{2}$,
$y^{+,n}(t_{*})=1+\delta$,
$y^{+,n}(t)>1+\delta$ for all $t\in I$,
then $t_{**}=\sup I<t_{*}+\nu_{1}$, $y^{+,n}(t_{**})=1+\delta$, and 
$y^{+,n}(t)<2\frac{d}{c}$ for every $t\in I$.
\end{itemize}
 
\end{proposition}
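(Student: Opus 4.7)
The plan is to extend the envelope argument of Proposition \ref{prop:envelope} to the perturbed equation \eqref{eqn:Efn} by carrying the explicit $\delta$-error through the integral formulation. The inequalities (1)--(9) are precisely tuned so that the strict gap $d - d_0 > 0$ and the strict positivity of $g$ on $[m_0/2, 1]$ dominate the $O(\delta)$ perturbation coming from $f_n \ne g$, $a_n \ne c$, $b_n \ne d$.

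For part (i), I would set $\tilde y(t) = y^{+,n}(t_* + t)$ and $T = t_{**} - t_*$, then establish on $[0, \min(T, \tau_0)]$ a lower envelope of the form $\tilde y(t) \ge w^0(t) - A\delta$ for some constant $A$ depending only on $c, d, \tau_0$. On $[0, 1]$ the control reduces to bounding $e^{-a_n t}(1-\delta) - e^{-ct}$, which is $O(\delta)$ by condition (5). On successive unit intervals $[k, k+1]$ with $k \ge 1$, I would iterate in the spirit of Proposition \ref{prop:yn-x-on12} by subtracting the integral equations for $\tilde y$ (with coefficients $a_n, b_n, f_n$) and $w^0$ (with $c, d_0, g$). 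The key algebraic point is that for $\xi \in [m_0/2, 1-\delta]$ one has $b_n f_n(\xi) - d_0 g(\xi) \ge (d - d_0) g(\xi) - O(\delta)$, and conditions (1)--(3) are designed so that the positive term $(d - d_0) g(\xi)$ strictly dominates the accumulated error $k_2^k \delta$. Combining the envelope with Corollary \ref{cor:band} (giving $w^0 \ge m_0$ on $[0, \tau_0]$) and the setup choice $\delta < m_0/4$, I obtain $\tilde y(t) > m_0/2$ throughout $I$. A sharper form of the same envelope, exploiting the extra push of $(d - d_0) g$ integrated across $[1, \tau_0]$, produces $\tilde y(\tau_0) > 1 - \delta$ strictly; continuity together with the assumption $\tilde y(t) < 1 - \delta$ on $(0, T)$ then forces $T < \tau_0$, and maximality of $I$ pins $\tilde y(T) = 1 - \delta$.

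For part (ii), once $y^{+,n}(t-1)$ exceeds $1 + \delta$ (which holds throughout $[t_* + 1, t_{**}]$ by the hypothesis on $I$), condition (6) gives $b_n f_n(y^{+,n}(t-1)) < \delta$, so $(y^{+,n})'(t) \le -a_n y^{+,n}(t) + \delta$. Integrating and using $a_n > c/2$ from (9), $y^{+,n}$ decays exponentially toward $\delta / a_n$, which lies far below $1 + \delta$; combined with the upper bound $y^{+,n} \le 2d/c$ from Proposition \ref{prop:bounds}(ii), this yields an explicit bound for the first re-crossing of $1 + \delta$. For the initial unit $[t_*, t_* + 1]$ I would run an upper envelope argument against $w^1$ analogous to part (i), giving $y^{+,n}(t_* + t) \le w^1(t) + A\delta$ and hence the strict inequality $y^{+,n}(t) < 2d/c$ via condition (8). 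Adding the two time bounds yields a universal $\nu_1 > 1$.

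The main obstacle I anticipate is the unit-interval induction in part (i): the constants $k_1, k_2, \Delta$ must be chosen so that on every interval $[k-1, k]$ the positive feedback gap $(d - d_0) g$ on $[m_0/2, 1-\delta]$ robustly absorbs the geometrically growing cumulative error $\delta k_2^k$, exactly as in the proof of Proposition \ref{prop:yn-x-on12}, but with the asymmetry between $\tilde y$ and $w^0$ now handled by the positive drift rather than a small initial mismatch. Once the lower envelope is in place, both parts follow by direct continuity and comparison.
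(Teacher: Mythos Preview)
Your overall strategy for part (i)---comparing $\tilde y$ to the envelope $w^0$ and exploiting the strict gap $d-d_0>0$---matches the paper. But the execution you propose, iterating over unit intervals with geometrically growing error $\delta k_2^k$ in the style of Proposition~\ref{prop:yn-x-on12}, does not work with the conditions (1)--(9) as stated. Those conditions bound $\delta$ against fixed quantities like $\tfrac{1}{k_1}(d-d_0)g(m_0/2)$; none of them controls $\delta k_2^{\lceil\tau_0\rceil}$, and since $\tau_0=\tau_0(d_0)$ can be large while $k_2>2$, a geometric accumulation over $\lceil\tau_0\rceil$ steps is not absorbed. You flag exactly this as ``the main obstacle'' but do not resolve it.

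The paper sidesteps the accumulation entirely. It proves the crude bound $\tilde y(t)\ge w^0(t)-2\delta$ only on $[0,1+\Delta]$, and then uses conditions (2)--(3) to show the single strict inequality $\tilde y(1+\Delta)>w^0(1+\Delta)$. From that point on it runs a \emph{first-touch argument}: if $t_0>1+\Delta$ were the first time with $\tilde y(t_0)=w^0(t_0)$, then computing $\tilde y'(t_0)-(w^0)'(t_0)$ and invoking condition (1) (together with $g$ increasing and $\tilde y(t_0-1)\ge w^0(t_0-1)-2\delta>m_0/2$) gives a strictly positive derivative difference, contradicting first contact. This propagates $\tilde y>w^0$ all the way to $\min\{\tau_0,t_{**}-t_*\}$ with no error growth, which immediately yields both $t_{**}-t_*<\tau_0$ and the lower bound $m_0/2$. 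So the missing idea is: replace the iteration by a single crossover at $1+\Delta$ followed by a derivative-contradiction argument.

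For part (ii) your plan is essentially correct, though you overcomplicate the initial segment: on $[t_*,t_*+1]$ the paper does not use $w^1$ at all but simply bounds $f_n\le 1$ in the integral equation to get $\tilde y(t)\le (1+\delta-\tfrac{b_n}{a_n})e^{-a_n(t-t_*)}+\tfrac{b_n}{a_n}<\tfrac{b_n}{a_n}<2d/c$ directly from (8). After $t_*+1$ your decay argument via condition (6) is exactly what the paper does, yielding $\nu_1=1+\tfrac{2}{c\delta}(\tfrac{2d}{c}-1)$.
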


\begin{proof}{\it Proof of (i).}
	Put $w=w^0, y(t)=y^{+,n}(t_*+t)$ for $t\ge0$.
	Then $y(0)=1-\delta$, $y(t)<1-\delta$ for $t\in(0,t_{**}-t_*)$, and $w(0)=1$,
	while $w(t)<1$ for $t\in(0,\tau_0)$ and $w(\tau_0)=1$.  
	
	From the integral 
	equation \eqref{inteqn:y},  
	$$y(t)=e^{-a_nt}(1-\delta)+b_n\int_0^{t}e^{-a_n(t-s)}f_n\left(y(s-1)\right) ds. 
	$$
Hence, for  $t\in[0,1]$,	by $ w(t)=e^{-ct}$,
	\[
	y(t)-w(t)\ge e^{-a_nt}(1-\delta)-e^{-ct}
	\ge e^{-a_nt}-e^{-ct}-\delta e^{-a_nt}.
	\]	
We have 
	$|e^{-a_n t}-e^{-ct}|\le |a_n-c| t\le|a_n-c|<\delta$ Then 
	\begin{equation}\label{eq:step1}
		y(t)-w(t)\ge -|a_n-c|-\delta >-2\delta,\qquad 0\le t\le1.
	\end{equation}	
For $t\in[1,2]$, by \eqref{inteqn:y} and \eqref{inteqn:x}, 
	\begin{align*}
		y(t)&=e^{-a_n(t-1)}y(1)+b_n\!\int_{1}^{t}\! e^{-a_n(t-s)}f_n\left(y(s-1)\right) ds,\\
		w(t)&=e^{-c(t-1)}w(1)+d_0\int_{1}^{t}\! e^{-c(t-s)}g\left(w(s-1)\right) ds.
	\end{align*}
From $|e^{-a_nu}-e^{-cu}|\le |a_n-c|u$, $u\ge 0$, $w(1)=e^{-c}$, and 
by (5) we obtain
$$
e^{-a_n(t-1)}(y(1)-w(1))+\left(e^{-a_n(t-1)}-e^{-c(t-1)}\right)w(1)> -2\delta - \delta(t-1). 
$$ 
By the triangle inequality and the choices of $\delta, n$,
\begin{align*}
& b_n\int_{1}^{t}e^{-a_n(t-s)}f_n(y(s-1))ds-d_0\int_{1}^{t}e^{-c(t-s)}g(w(s-1))ds\\
=& (b_n-d)\int_{1}^{t}(e^{-a_n(t-s)})f_n(y(s-1))ds +d\int_{1}^{t} (e^{-a_n(t-s)}-e^{-c(t-s)}) f_n(y(s-1))ds\\
& + d\int_{1}^{t} e^{-c(t-s)} (f_n(y(s-1))-g(y(s-1))) ds\\
&+\  d\int_{1}^{t} e^{-c(t-s)}(g(y(s-1))-g(w(s-1))) ds + (d-d_0)\int_1^t e^{-c(t-s)}g(w(s-1)) ds\\
\geq & -\left[|b_n-d|+d|a_n-c|+d\|f_n-g\|_{[0,1-\delta]}+d\|g'\|_{[0,1]}2\delta\right](t-1)\\
&+(d-d_0)\int_1^t e^{-c(t-s)}g(w(s-1)) ds\\
> &-\left[3+2d\|g'\|_{[0,1]}\right]\delta (t-1) + (d-d_0)\int_1^t e^{-c(t-s)}g(w(s-1)) ds.
\end{align*} 
Consequently, by $w(s-1)=e^{-c(s-1)}$, $s\in[1,2]$,
$$
	y(t)-w(t)\ge A(t)\  \text{ for all } t\in[1,2],
$$
where
$$
A(t)= -\delta\left[ 2+k_1(t-1)\right] 
	 +(d-d_0)\int_{1}^{t}e^{-c(t-s)}g\left(e^{-c(s-1)}\right) ds, \qquad t\in[1,2].
	 $$
The derivative of $A$ is 
	\[
	A'(t)=-\delta k_1
	+(d-d_0)\Bigl[-ce^{-ct}\int_{1}^{t}e^{cs}g(e^{-c(s-1)}) ds+g\left(e^{-c(t-1)}\right)\Bigr].
	\]
If $t\in [1,1+\Delta]$ then by the  choice of $\Delta$, we obtain
	\[
	A'(t)\ge -\delta k_1 -c(d-d_0)\Delta +(d-d_0)g(e^{-c\Delta})
	\ge -\delta k_1 +\tfrac12(d-d_0)g(e^{-c\Delta})>0
	\]
	by (2). Thus, $A$ is increasing on $[1,1+\Delta]$, and
	\[
	y(1+\Delta)-w(1+\Delta)\ge A(1+\Delta)
	>-\delta k_2 
	+(d-d_0)e^{-c(1+\Delta)}\!\int_{1}^{1+\Delta}\!e^{cs}g(e^{-c(s-1)}) ds>0
	\]
	by (3). Combining with \eqref{eq:step1} we conclude
	\begin{equation}\label{eq:pre-touch}
		y(t)> w(t)-2\delta\quad\text{for }t\in[0,1+\Delta],\qquad
		y(1+\Delta)>w(1+\Delta).
	\end{equation}
From $\delta<m_0/4$ and $w(t)\ge m_0$ on $[0,\tau_0]$, we also get
	\begin{equation}\label{eq:lower-m0}
		y(t)> w(t)-2\delta\ge m_0-2\delta \ge  \frac{m_0}{2},
		\qquad 0\le t\le 1+\Delta.
	\end{equation}
	
We claim that 
$$
y(t)>w(t) \text{ for all } t\in [1+\Delta, t_{**}-t_*).
$$
Assume that there exists a  $t_0\in (1+\Delta,\min\{\tau_0, t_{**}-t_*\})$ 
such that  $y(t)>w(t)$ on $(1+\Delta,t_0)$, and $y(t_0)=w(t_0)$. 
	Then $y'(t_0)\le v'(t_0)$.
On the other hand, 
	\begin{align*}
		y'(t_0)-w'(t_0)
		=&-(a_n-c)w(t_0)+(b_n-d)f_n\left(y(t_0-1)\right)
		+d\bigl[f_n\left(y(t_0-1)\right)-g\left(y(t_0-1)\right)\bigr]\\
		& +d\bigl[g\left(y(t_0-1)\right)-g\left(w(t_0-1)\right)\bigr]
		+(d-d_0)g\left(w(t_0-1)\right).
	\end{align*}
By \eqref{eq:lower-m0} we have
	$y(t_0-1)\ge w(t_0-1)-2\delta> m_0/2$.  
	Using the choices of $\delta $ and $n$, we obtain
	\[
	y'(t_0)-w'(t_0)>
	-3\delta-2d\|g'\|_{[0,1]}\delta
	+(d-d_0)g\!\left(\frac{m_0}{2}\right)
	> -\delta k_1+(d-d_0)g\!\left(\frac{m_0}{2}\right)>0,
	\]
	again by (1). This is a contradiction. Therefore,
	\begin{equation}\label{eq:strict-ineq}
		y(t)>w(t)\qquad\text{for all }t\in[1+\Delta,\min\{\tau_0, t_{**}-t_*\}).
	\end{equation}
Hence it is clear that $t_{**}-t_*<\tau_0$, and the claim holds. 
	Finally, combining \eqref{eq:lower-m0},  \eqref{eq:strict-ineq} and 
$w(t)\ge m_0$ for $t\in[0,\tau_0]$,  	it follows that 
	$y^{+,n}(t)\ge m_0/2$ for every $t\in I$. This proves (i).

{\it Proof of (ii).}
	Let $I$ be as in the statement and write $y(t)=y^{+,n}(t)$ for simplicity.  
	For $t\in[t_{*},t_{*}+1]\subset I$ equation \eqref{inteqn:y} gives
	\begin{align*}
	y(t) &= e^{-a_{n}(t-t_{*})}y(t_{*})
+b_{n}\int_{t_{*}}^{t}e^{-a_{n}(t-s)}f_{n}(y(s-1)) ds\\
 &\le e^{-a_{n}(t-t_{*})}(1+\delta)
+\frac{b_{n}}{a_{n}}\left(1-e^{-a_{n}(t-t_{*})}\right).
	\end{align*} 
	Rearranging,
	\[
	y(t)\le \Bigl(1+\delta-\frac{b_{n}}{a_{n}}\Bigr)e^{-a_{n}(t-t_{*})}+\frac{b_{n}}{a_{n}}
	<\frac{b_{n}}{a_{n}}<2\frac{d}{c},
	\]
	by property (8).  Thus $y(t)$ is bounded by $2d/c$ on $[t_{*},t_{*}+1]$.
	
	If $t_{*}+1\in I$ and $t\in(t_{*}+1,t_{**})\subset I$, then $y(t-1)\ge 1+\delta$, and hence	
\begin{align*}
y'(t)&=-a_{n}y(t)+b_{n}f_{n}(y(t-1))
\le -a_{n}y(t)+b_{n}\|f_{n}\|_{[1+\delta,\infty)}\\
&\le -\frac{c}{2}y(t)+\delta
\le -\frac{c}{2}(1+\delta)+\delta<-\frac{c}{2} \delta<0,
\end{align*}	
	where we used properties (4), (6), (9).
	Thus $y$ is strictly decreasing on $(t_{*}+1,t_{**})$, and it must reach the level $1+\delta$ in finite time. 
Indeed,	 from $y'(t)\le-\frac{c}{2}\delta$ on $(t_{*}+1,t_{**})$ and $y(t_{*}+1)< 2d/c$, we get
	\[
	t_{**}-t_{*} \le 1+\frac{2}{c\delta}\Bigl(y(t_{*}+1)-(1+\delta)\Bigr)
	 \le 1+\frac{2}{c\delta}\Bigl(\tfrac{2d}{c}-1\Bigr)
	=\nu_{1}>1.
	\]
	Hence $t_{**}<t_{*}+\nu_{1}$ and $y(t_{**})=1+\delta$ by the maximality of $I$.
\end{proof}

  Now we are in a position to prove  Theorems~\ref{thm:Eg} and 
  \ref{thm:Efn}. 

\begin{proof}[Proof of Theorem~\ref{thm:Eg}]
Statement (A) follows from Proposition \ref{prop:W-Eg}(ii).

By Proposition \ref{prop:W-Eg}(i), $x^+_{t_2+1}=z_1^d$, where $z^d$ was 
introduced at the beginning of this Section. Propositions  \ref{prop:W-Eg}, \ref{prop:D-interval} and 
the definition 
of $\mathcal{D}$ combined yield the proof of Statement (B)(i). 
 
By Corollary \ref{cor:band}, $x^+(\mathbb{R})\subset [m_0,m_1]$. 
Defining $\sigma=\max\{\tau_0,\tau_1\}$, recalling $x^+(t_2)=1$, 
Proposition \ref{prop:envelope}  implies that, if $T\ge t_2$, then each interval $[T,T+\sigma]$ 
contains at least one $t$ with $x^+(t)=1$. 

From  $x^+(\mathbb{R})\subset [m_0,m_1]$ and Proposition 
\ref{prop:bounds} it follows that 
$$
\{x^+_{t}:t\in\mathbb{R}\}\subset C_{m_0,m_1}^{2d}\subset 
C_{0,d/c}^{2d}. 
$$
The statement for 
$\mathcal{A}=\omega_\Gamma(x^+_{t_2+1})$ follows from the remarks after 
Proposition \ref{prop:bounds}. 
Clearly, $\hat{0}\notin \mathcal{A}$. 
The semiflow $\Gamma$ restricted to $[0,\infty)\times C_{0, (1+\xi_1)/2}$
is continuous. So, in case $\hat{\xi}_1\in \mathcal{A}$, 
$x^+(t)<1$ would follow on some interval $[T,T+2\sigma]$ with 
$T>t_2$, a contradiction to the fact that there is a $t $ with 
$x^+(t)=1$ in each interval of length $\sigma$. 

Statement B(iii) is clear from $x^+_{t_2+1}=p_0$ and uniqueness of solutions. 

The proof is complete.  
\end{proof}

\begin{proof}[Proof of Theorem~\ref{thm:Efn}]
Statement (A) follows from Proposition \ref{prop:W-Efn}(ii).

By Theorem \ref{thm:Eg}(B)(i), there is a $T>t_2+1$ such that $x^+_{T}\ll \hat{\xi}_1$. Then Corollary \ref{cor:yntozero} can be applied with
$Y^{+,n,j}(t)=y^{+,n}(t)$ and $\varepsilon_n=0$, $J_n=\{0\}$, to obtain $y^{+,n}(t)\to 0$ as $t\to\infty$. Thus,  Statement (B)(i) holds.

In order to prove Statement (B)(ii), choose $d_0,\delta,n_0$ 
as in Proposition \ref{prop:interval-exit}.
By Proposition \ref{prop:yn-x-on12}, with $Y^{+,n}(t)=y^{+,n}(t)$,  
$\varepsilon_n=0$, $J_n=\{0\}$, and by 
$x^+(t_2)=1$, for all large $n$, we have 
$|y^{+,n}(t_2)-1|<\delta$. 
Therefore, Proposition \ref{prop:W-Efn}, Proposition \ref{prop:interval-exit} with $\tilde{\sigma}=
\max\{\tau_0,\nu_1\}$, and   $\tilde{m}_0=m_0/2$, $\tilde{m}_1=2d/c$ 
imply 
$y^{+,n}(\mathbb{R})\subset [\tilde{m}_0,\tilde{m}_1]$, and the 
intervals  $[T,T+\tilde{\sigma}]$ contain a $t$ with 
$y^{+,n}(t)\in [1-\delta,1+\delta]$ provided $T\ge t_2$. 
 
By Proposition \ref{prop:bounds} 
the  set 
$\{y^{+,n}_{t}:t\in \mathbb{R}\}$ is in the compact subset
$C_{\tilde{m}_0,\tilde{m}_1}^{8d}$ of $C^+$. 
Then 
$\mathcal{A}_n=\omega_{\Phi^n}(y^{+,n}_{t_2+1})$ is 
a nonempty, compact, invariant subset of $C_{\tilde{m}_0,\tilde{m}_1}^{8d}$ of $C^+$ with $y_t^{+,n}\to\mathcal{A}_n$ as $t\to\infty.$

It remains to show that $\hat{0}$ and $\hat{\xi}_{1,n}$ are not in 
$\mathcal{A}_n$.   
Obviously, $\hat{0}\notin\mathcal{A}_n$.   
If $\hat{\xi}_{1,n}\in \mathcal{A}_n$, then, for large $n$, 
$\hat{\xi}_{1,n}<1-\delta $, and 
by continuity 
$y^{+,n}(t)<1-\delta $ on some interval $I\subset[t_2,\infty)$ with length 
greater than $\tilde{\sigma}$, a contradiction.  

For the proof of B(iii), suppose $d>d^*$ and $(C_g)(a)(b)$, $(C_{f_n})(a)(b)$. 

A consequence of Proposition \ref{prop:W-Eg} is the existence of 
an $\varepsilon>0$ and a $t_3\in (t_1,t_2)$ with 
$[t_3,t_3+1]\subset (t_1,t_2) $ such that  
$$
x^+(t)\ge 1+2\varepsilon \text{ for all }t\in [t_3,t_3+1]. 
$$
Proposition \ref{prop:main0-application} guarantees that, for all 
sufficiently large $n$, equation \eqref{eqn:Efn} admits a stable 
periodic orbit $\mathcal{O}^n$ such that 
its region of attraction contains the set 
$\{\psi\in C^+: \psi(s)\ge 1+\varepsilon,\ s\in[-1,0]\}$.

Proposition \ref{prop:W-Efn} gives the functions $y^{+,n}$ 
so that condition $(Y_n)$ is satisfied with $Y^{+,n,j}=y^{+,n}$ and 
$\varepsilon_n=0$, $J_n=\{0\}$. 
By Corollary \ref{cor:yn>1+e} we conclude $y^{+,n}(t)\ge 1+\varepsilon$ 
for all $t\in [t_3,t_3+1]$, that is $y^{+,n}_{t_3+1}$ is in the 
region of attraction of $\mathcal{O}^n$. 
Consequently, $\mathcal{A}_n=\mathcal{O}^n$, for all large $n$.
This completes the proof. 
\end{proof}

\section{Proof of Theorem \ref{thm:Hopf}}\label{sec6}

{\it Step 1: A local Hopf bifurcation.}	
Consider the parametrized  equation  
	\begin{equation}\label{eqn:Ehalpha}
		\tag{$E_{h,\alpha}$}
		u'(t) 
		 = 
		(1+\alpha)\Bigl[- a u(t) + b h\left(u(t-1)\right)\Bigr]
	\end{equation}
with $\alpha\in(-1,1)$, $a>0$, $b>0$  and a $C^2$-smooth real function $h$ 
such that, for some $j\in\N$,  
\begin{equation}\label{hopf-cond}
\begin{aligned}
&a=bh'(0) \cos \Theta_j \text{ where } \Theta_j \text{ is the unique solution }\\
& \text{of } \Theta=-a\tan \Theta \text{ in }(2j\pi -\frac{\pi}{2},2j\pi). 
\end{aligned}
\end{equation}
The right hand side of \eqref{eqn:Ehalpha} can be written as
$$
(1+\alpha)[- a u(t) + b h'(0)u(t-1)]+(1+\alpha)b[h(u(t-1)) - h'(0)u(t-1)].
$$
Then the characteristic function $H(\alpha,\lambda)$ of the linearization  of 
\eqref{eqn:Ehalpha} is 
$$
H(\alpha,\cdot): \ \C\ni\lambda\mapsto \lambda + (1+\alpha)[a-bh'(0)e^{-\lambda}]\in\C.
$$
By \eqref{hopf-cond}, $\lambda_j=i\Theta_j$ and $\overline{\lambda_j}$ 
are simple characteristic roots, and no other roots with zero real part. 
Let  
$	H_1,H_2: (-1,1)\times \mathbb{R}\times \mathbb{R}
	 \to 	\mathbb{R}$ 
	 be given by
	$
	H(\alpha, \mu+i \nu)=H_1 (\alpha, \mu, \nu) +iH_2 (\alpha, \mu, \nu)$. 
Then $H_1(0,0,\Theta_j)=0$ and $H_2(0,0,\Theta_j)=\Theta_j$. 
	$$
\textrm{det}
	\begin{bmatrix}
		\dfrac{\partial H_1}{\partial \mu} & \dfrac{\partial H_1}{\partial \nu}
		\\
		\dfrac{\partial H_2}{\partial \mu} & \dfrac{\partial H_2}{\partial \nu}
	\end{bmatrix}_{(0,0,\Theta_j)} = (1+a)^2 + \Theta_j^2\ne 0.
	$$
By the Implicit Function Theorem there is a unique $C^1$ function 
	$\alpha\mapsto (\mu(\alpha), \nu(\alpha))$  
	such that $H(\alpha,\mu(\alpha)+i\nu(\alpha))=0$ for small $|\alpha|$ 
	and $\mu(0)=0$, $\nu(0)=\Theta_j$. That is, the characteristic root 
	$\lambda (0)=\lambda_j=i\Theta_j$ has a unique local $C^1$ continuation 
	$\lambda (\alpha)=\mu(\alpha)+i\nu(\alpha) $. 
	Differentiating  the equations $H_k(\alpha,\mu(\alpha),\nu(\alpha))=0$, $k\in\{1,2\}$, with respect to $\alpha$ at $\alpha=0$, it is easy to obtain 
	\begin{equation}\label{transveral}
	\mu'(0)=\dfrac{\Theta_j^2}{(1+a)^2+\Theta_j^2}>0. 
	\end{equation}	
Therefore, 	
the Local Hopf Bifurcation Theorem  (see e.g., \cite{Hale} or \cite{LW})
can be applied to find a sequence $(\alpha_k)_{k=1}^\infty $, 
 with $\alpha_k \to 0$ as $k\to\infty$, such that for each $k\in\mathbb{N}$, 
		\eqref{eqn:Ehalpha} with $\alpha=\alpha_k$ admits a nontrivial periodic solution $		r^k: \mathbb{R} \to \mathbb{R}$ with minimal period 
		$\omega^k>0$, and 
		$
		\max_{t\in\mathbb{R}}
		\bigl| r^k(t)\bigr|
		 \to  0$ 
as  $k\to\infty$.

{\it Step 2:	Construction of the sequences $(a_n)$, $(b_n)$ and the periodic solutions $(q^n)$. }

Let $d>c>0$ and $j\in\mathbb{N}$ be given so that $(C_g)(c)$ holds. 
We look for $a_n$ and $b_n$ in equation \eqref{eqn:Efn} in the form 
$a_n=\gamma_n c$, $b_n=\gamma_n d$ for some $\gamma_n>0$ with $\gamma_n\to 1$
as $n\to\infty$. 

Consider the equations  
	\begin{equation}\label{eqn:Efnbeta}
		\tag{$E_{f_n,\beta}$}
		y'(t)
		 = 
		\beta \Bigl[- c y(t)
		 + 
		d f_n\left(y(t-1)\right)\Bigr]
	\end{equation}
	with a parameter $\beta\in(0,2)$. 
The zeros of $\xi\mapsto \beta[- c \xi + d f_n(\xi)]$ are independent of 
\(\beta\in(0,2)\). 
Proposition \ref{prop:zeros-fn} with $a_n=c$, $b_n=d$ applies to 
get that, for a given $\kappa\in (\xi_1,1)$,  for all sufficiently large $n$,  the only zeros 
in $[0,\kappa] $ are $0$ and $\xi_{1,n}$, and  \(\xi_{1,n}\to \xi_1\) as \(n\to\infty\). 

By $(C_g)(a)$ we have $g'(\xi_1)>\tfrac{c}{d}$, and by  $(C_{f_n})(b)$, 
	$$
	0 < \frac{c}{d f'_n(\xi_{1,n})}
	 < 
	1
	$$
	for all sufficiently large \(n\). 
Hence, 
there is a  unique \(\Theta_{j,n}\in\left(2j\pi-\tfrac{\pi}{2}, 2j\pi\right)\) with
\begin{equation}\label{thete-jn}
\cos \Theta_{j,n}
	 = 
	\frac{c}{ d f'_n(\xi_{1,n}) }, 
\end{equation}
where $j\in\N $ is given in $(C_g)(c)$.  
Define
	\begin{equation}\label{beta-jn}
	\beta_{j,n}
	 = 
	- \frac{\Theta_{j,n}}{ c \tan \Theta_{j,n} }.
	\end{equation}
Notice that, from $\xi_{1,n}\to\xi_1$ and  	$(C_{f_n})(b)$, 
it follows that \(\Theta_{j,n}\to\Theta_j\), and
	$
	\beta_{j,n}
	 \to 
	- \frac{\Theta_j}{ c \tan(\Theta_j) }
	 = 1$ 
as $n\to\infty$.
	
For a fixed $n\in\mathbb{N}$, consider equation \eqref{eqn:Ehalpha} with 
$h=h_n$ defined by \eqref{def-hn} in Section \ref{sec4}, and $a=\beta_{j,n} c$, $b=\beta_{j,n}d$.   
Equations \eqref{thete-jn} and \eqref{beta-jn} show that condition \eqref{hopf-cond} is satisfied. 
One easily checks that the transversality condition \eqref{transveral} 
holds as well in this case. 

Consequently, Step 1 implies that, for each  $n\in\mathbb{N}$, 
there exists a sequence $(\alpha_{n,k})_{k=1}^\infty$ 
with $\alpha_{n,k}\to 0$ as $k\to\infty$ such that, for each 
$k\in\mathbb{N}$, the equation 
$$
u'(t)=(1+\alpha_{n,k})\beta_{j,n}[-cu(t)+dh_n(u(t-1))] 
$$
has a nontrivial periodic solution $r^{n,k}:\R\to\R$ with minimal period  $\omega^{n,k}>0$ and with 
$ \max_{t\in\R} |r^{n,k}(t)|\to 0$ as $k\to\infty$.  

For each $n\in\mathbb{N}$, choose $k(n)$ so large that 
$$
|\alpha_{n,k(n)}| <\dfrac{1}{n} \text{ and }
\max_{t\in\R} |r^{n,k(n)}(t)|<\dfrac{1}{n}.
$$
Setting 
\begin{equation}\label{def-an-bn}
	a_n	 = (1+\alpha_n)\beta_{j,n}  c,
	\quad
	b_n  = (1+\alpha_n)\beta_{j,n} d,
	\quad
	r^n=r^{n,k(n)},\quad
	\omega^n=\omega^{n,k(n)},
\end{equation}
it is clear that, for all sufficiently large $n$, \(q^n:\mathbb{R}\to\mathbb{R}\), given by 
$q^n(t)=r^{n}(t)+\xi_{1,n}$, is a nonconstant periodic 
solution of equation \eqref{eqn:Efn} with minimal period $\omega^n>0$
satisfying 
\begin{equation}\label{qn-xin}
	\max_{t\in\mathbb{R}}
	\bigl| q^n(t) - \xi_{1,n}\bigr|
	 < \frac{1}{ n }.
\end{equation}	
In addition, $a_n\to c$, $b_n\to d$ as $n\to\infty$. 
Define 
$\mathcal{Q}^n=\{q_t^n: t\in [0,\omega^n]\}$.

{\it Step 3: The unstable set $W^u(\mathcal{Q}^n)$}. 

We apply the results of Section \ref{sec3} in the case when 
$n$ is sufficiently large,  $h=h_n$ defined by \eqref{def-hn},  
$a=a_n$, $b=b_n$,  $p=r^n$, 
$\omega=\omega^n$ are given in \eqref{def-an-bn}. 
Let $\Phi_n$ denote the solution semiflow for this case. 


As in Section \ref{sec3}, for each large $n$, define 1-dimensional leading unstable manifolds 
$$
W^u_{\text{loc},\delta_n}(r^n_0,\Phi_n(\omega^n,\cdot))
$$ 
of the fixed point $r^n_0$ of the map $\Phi_n(\omega^n,\cdot)$ with 
some $\delta_n>0$. 
By \eqref{Wloc-decomp}, $W^u_{\text{loc},\delta_n}(r^n_0,\Phi_n(\omega^n,\cdot))$ is decomposed into three subsets: 
\begin{equation}\label{Wlocn-decomp}
W^u_{\text{loc},\delta_n}(r^n_0,\Phi_n(\omega^n,\cdot))=
W^{u,-}_{\text{loc},\delta_n} (r^n_0,\Phi_n(\omega^n,\cdot))
\cup \mathcal{Q}^n 
\cup W^{u,+}_{\text{loc},\delta_n}(r^n_0,\Phi_n(\omega^n,\cdot)),
\end{equation}
where for $\psi\in 
W^{u,-}_{\text{loc},\delta_n} (r^n_0,\Phi_n(\omega^n,\cdot))$ 
and 
$\phi\in W^{u,+}_{\text{loc},\delta_n}(r^n_0,\Phi_n(\omega^n,\cdot)) $, 
the relation $\psi\ll r^n_0 \ll \phi $ is satisfied. 

By the definition of $h_n$, see \eqref{def-hn}, the 1-dimensional leading local unstable manifold of  the map $\Phi^n(\omega^n,\cdot)$ at its fixed point $q^n_0$ can be defined by 
$$
W^u_{\text{loc},\delta_n}(q^n_0,\Phi_n(\omega^n,\cdot))
=
W^u_{\text{loc},\delta_n}(r^n_0,\Phi_n(\omega^n,\cdot))+\hat{\xi}_{1,n}, 
$$ 
and analogously for the sets $W^{u,-}_{\text{loc},\delta_n} (q^n_0,\Phi^n(\omega^n,\cdot))$ 
and 
$W^{u,+}_{\text{loc},\delta_n}(q^n_0,\Phi^n(\omega^n,\cdot)) $. 

The 2-dimensional leading unstable set  of the periodic orbit
$\mathcal{Q}^n$  
is given by
$$
W^u(\mathcal{Q}^n)
 = 
\Phi^n\left([0,\infty)\times W^u_{\text{loc},\delta_n} (q_0^n,\Phi^n(\omega^n,\cdot)) \right).
$$
Moreover, 
$$
W^u(\mathcal{Q}^n)
 = 
W^{u,-}(\mathcal{Q}^n)\cup  \mathcal{Q}^n \cup W^{u,+}(\mathcal{Q}^n)
$$
where 
$$
W^{u,-}(\mathcal{Q}^n)
 = 
 \Phi^n\left([0,\infty)\times W^{u,-}_{\text{loc},\delta_n} (q^n_0,\Phi^n(\omega^n,\cdot)) \right)
 $$
 and
 $$
W^{u,+}(\mathcal{Q}^n)
 = 
 \Phi^n\left([0,\infty)\times W^{u,+}_{\text{loc},\delta_n} (q^n_0,\Phi^n(\omega^n,\cdot)) \right).
$$

Fix $\kappa^+\in (0,1-\xi_1)$ and $\kappa^-\in (-\xi_1,0)$ 
as in Section \ref{sec4}. 
For each 
 $\psi\in W^{u,-}_{\text{loc},\delta_n}(r^n_0,\Phi_n(\omega^n,\cdot)) $ 
 and 
 $\phi\in W^{u,+}_{\text{loc},\delta_n} (r^n_0,\Phi_n(\omega^n,\cdot))$, 
 Proposition \ref{W(O)} gives the solutions 
 $u^{-,n,\psi}:\R\to\R$, $u^{+,n,\phi}:\R\to\R$ of equation \eqref{eqn:Eh},  
 with $h=h_n$, $a=a_n$, $b=b_n$ given as above, 
 such that statements (i) and (ii) of Proposition \ref{W(O)} hold. 
 
 For large $n$, $\psi\in W^{u,-}_{\text{loc},\delta_n}(q^n_0,\Phi^n(\omega^n,\cdot)) $ 
 and 
 $\phi\in W^{u,+}_{\text{loc},\delta_n} (q^n_0,\Phi^n(\omega^n,\cdot))$,
 define 
the solutions $y^{-,n,\psi}:\mathbb{R}\to \mathbb{R}$, 
 $y^{+,n,\phi}:\mathbb{R}\to \mathbb{R}$ of equation \eqref{eqn:Efn}
so that 
\begin{equation}\label{def-yn}
y^{-,n,\psi}(t) = 
u^{-,n,\psi-\hat{\xi}_{1,n}}(t)+\xi_{1,n} \quad (t\in\mathbb{R})
\end{equation}
and
\begin{equation}\label{def+yn}
y^{+,n,\phi}(t) = 
\begin{cases}
u^{+,n,\phi-\hat{\xi}_{1,n}}(t)+\xi_{1,n} \quad (t\le 0)\\
\Phi^n\left(t, u^{+,n,\phi-\hat{\xi}_{1,n}}_0+\hat{\xi}_{1,n}\right)(0)
\quad (t>0).
\end{cases}
\end{equation}
Then, by the definition of the unstable set $W^u(\mathcal{Q}^n)$,
\begin{equation}\label{Wu-}
W^{u,-}(\mathcal{Q}^n)=
\left\lbrace y_t^{-,n,\psi}: t\in\mathbb{R},\ \psi\in W^{u,-}_{\text{loc},\delta_n} (q^n_0,\Phi^n(\omega^n,\cdot))
\right\rbrace 
\end{equation}
and
 \begin{equation}\label{Wu+}
W^{u,+}(\mathcal{Q}^n)=
\left\lbrace y_t^{+,n,\phi}: t\in\mathbb{R},\ \phi\in W^{u,+}_{\text{loc},\delta_n} (q^n_0,\Phi^n(\omega^n,\cdot))
\right\rbrace .
 \end{equation} 
From the construction above it is clear that the decomposition
$$
W^u(\mathcal{Q}^n)
 = 
W^{u,-}(\mathcal{Q}^n)\cup  \mathcal{Q}^n \cup W^{u,+}(\mathcal{Q}^n)
$$
holds.  
 
{\it Step 4: The proof of Statement (i).} 
Statement (i) of 
 Proposition \ref{W(O)} combined with 
 \eqref{def-yn} and \eqref{Wu-} yield the proof.

{\it Step 5: The proof of Statement (ii).}
From Proposition \ref{W(O)}(ii), 
 \eqref{def+yn} and \eqref{Wu+} it follows that $y_t^{+,n,\phi}\to 
\mathcal{Q}^n$ as $t\to-\infty$ for all  
$\phi\in W^{u,+}_{\text{loc},\delta_n} (q^n_0,\Phi^n(\omega^n,\cdot))$. 
It remains to show that 
$y_t^{+,n,\phi}\to \hat{0}$
 as $t\to+\infty$ for all  
$\phi\in W^{u,+}_{\text{loc},\delta_n} (q^n_0,\Phi^n(\omega^n,\cdot))$ 
which can be shown by applying Corollary \ref{cor:yntozero}. 
Indeed, 
Theorem \ref{thm:Eg} implies the existence of a $T>t_2+1$ such that 
$x^+_T\ll \hat{\xi}_1$. 
Condition $(Y_n)$ is satisfied with $\varepsilon_n=1/n$, 
$J_n=W^{u,+}_{\text{loc},\delta_n} (q^n_0,\Phi^n(\omega^n,\cdot)),
$, $Y^{+,n,\phi}=y^{+,n,\phi}$. 
The inequality $\xi_{1,n}-1/n\le y^{+,n,\phi}(t)\le y^{+,n,\phi}(0)=\kappa^++
\xi_{1,n}$, $t<0$, is guaranteed by \eqref{qn-xin}, \eqref{def+yn} and 
Theorem \ref{W(O)}(ii). 

{\it Step 6: The proof of Statement (iii).} 
The proof is similar to that of Theorem \ref{thm:Efn}(B)(ii). 

By Proposition \ref{W(O)}(ii) and \eqref{def+yn}, \eqref{Wu+}, 
the relation $y^{+,n,\phi}_t\gg q_t^n$ holds for all $t\le 0$, and for all 
$\phi\in W^{u,+}_{\text{loc},\delta_n} (q^n_0,\Phi^n(\omega^n,\cdot))$.

Choose $d_0,\delta,n_0$ 
as in Proposition \ref{prop:interval-exit}. 
Condition $(Y_n)$ is used as in Step 5, that is, $\varepsilon_n=1/n$, 
$J_n=W^{u,+}_{\text{loc},\delta_n} (q^n_0,\Phi^n(\omega^n,\cdot)),
$, $Y^{+,n,\phi}=y^{+,n,\phi}$. 
By Proposition \ref{prop:yn-x-on12},  and by 
$x^+(t_2)=1$, for all large $n$, we have 
$|y^{+,n,\phi}(t)-x^+(t)|<\delta$ for all $t\in[0,t_2]$ uniformly in $\phi\in W^{u,+}_{\text{loc},\delta_n} (q^n_0,\Phi^n(\omega^n,\cdot))$. 
In particular, $|y^{+,n,\phi}(t_2)-1|<\delta$. 

These facts and Proposition \ref{prop:interval-exit} with $\tilde{\sigma}=
\max\{\tau_0,\nu_1\}$, and   $\tilde{m}_0=m_0/2$, $\tilde{m}_1=2d/c$ 
imply that, for all $\phi\in W^{u,+}_{\text{loc},\delta_n} (q^n_0,\Phi^n(\omega^n,\cdot))$, the solutions $y^{+,n,\phi}$ satisfy
$y^{+,n,\phi}(\mathbb{R})\subset [\tilde{m}_0,\tilde{m}_1]$, and the 
intervals  $[T,T+\tilde{\sigma}]$ contain a $t$ with 
$y^{+,n,\phi}(t)\in [1-\delta,1+\delta]$ provided $T\ge t_2$. 
 
By using  Proposition  \ref{prop:bounds} as well, 
the  set 
$$
W^{u,+}(\mathcal{Q}^n)= \left\lbrace  y^{+,n,\phi}_{t}:t\in \mathbb{R}, \ 
\phi\in W^{u,+}_{\text{loc},\delta_n} (q^n_0,\Phi^n(\omega^n,\cdot))
\right\rbrace 
$$
is in the compact subset
$C_{\tilde{m}_0,\tilde{m}_1}^{8d}$ of $C^+$. 
Then 
$\omega_{\Phi^n}(y^{+,n,\phi}_{t_2+1})$ is 
a nonempty, compact, invariant subset of $C_{\tilde{m}_0,\tilde{m}_1}^{8d}$ of $C^+$ with $y_t^{+,n}\to \omega_{\Phi^n}(y^{+,n,\phi}_{t_2+1})$ as $t\to\infty.$

Clearly, the point $\hat{0}$ is not in 
$\omega_{\Phi^n}(y^{+,n,\phi}_{t_2+1})$.   
If $\hat{\xi}_{1,n}\in \omega_{\Phi^n}(y^{+,n,\phi}_{t_2+1})$
 or $\mathcal{Q}^n \cap \omega_{\Phi^n}(y^{+,n,\phi}_{t_2+1})\ne \emptyset$
 then  
by continuity 
$y^{+,n,\phi}(t)<1-\delta $ on some interval $I\subset[t_2,\infty)$ with length  
greater than $\tilde{\sigma}$, a contradiction. 

For any $\psi\in W^{u,+}(\mathcal{Q}^n)$ there are 
$\phi\in W^{u,+}_{\text{loc},\delta_n} (q^n_0,\Phi^n(\omega^n,\cdot))$ 
and $t\in\mathbb{R}$ so that $\psi=y^{+,n,\phi}_{t}$. 
Hence all stated properties of $y^\psi$ follow from those of 
$y^{+,n,\phi}$, and from 
 $\omega_{\Phi^n}(\psi)= \omega_{\Phi^n}(y^{+,n,\phi}_{t_2+1})$.

{\it Step 7: The proof of Statement (iv).} 
By Proposition \ref{prop:W-Eg} there are  
an $\varepsilon>0$ and a $t_3\in (t_1,t_2)$ with 
$[t_3,t_3+1]\subset (t_1,t_2) $ such that  
$
x^+(t)\ge 1+2\varepsilon \text{ for all }t\in [t_3,t_3+1]. $ 
Proposition \ref{prop:main0-application} guarantees that, for all 
sufficiently large $n$, equation \eqref{eqn:Efn} admits a stable 
periodic orbit $\mathcal{O}^n$ such that 
its region of attraction contains the set 
$\{\psi\in C^+: \psi(s)\ge 1+\varepsilon,\ s\in[-1,0]\}$.

Condition $(Y_n)$ is satisfied in the same way as in Step 5. 
Then Corollary \ref{cor:yn>1+e} can be applied to conclude 
that, for any fixed sufficiently large $n$, 
$$
y^{+,n,\phi}_{t}(t)\ge 1+\varepsilon \quad (t\in[t_3,t_3+1])
$$
for all 
$\phi\in W^{u,+}_{\text{loc},\delta_n} (q^n_0,\Phi^n(\omega^n,\cdot))$. 
Therefore, for any $\psi\in W^{u,+}(\mathcal{Q}^n)$, there is a $t>0$ 
such that $\Phi^n(t,\psi)$ is in the region of attraction of 
$\mathcal{O}^n$, and $\omega_{\Phi^n}(\psi)=\mathcal{O}^n$. 
This completes the proof. 


\section{Examples}\label{sec7}

Here we consider examples for our prototype equation \eqref{eqn:proto}. 
More precisely, for four  
pairs of parameters $a,b$ and large $n$, solutions $y^{\pm,n}$ are portrayed in the $(t,y)$ plane  complementing the visualization  on Figure \ref{fig:1}. The behaviors of the solutions, on the following figures, are rigorously proved, just like the other results in this article. 
The paper \cite{BKSZ} 
  guarantees the existence of the stable periodic orbits $\mathcal{O}^n$,  see also Propositions \ref{thm:main0} and  \ref{prop:main0-application}. 
The novelty of this paper is the existence of the the connecting orbits and the periodic orbits $\mathcal{Q}^n$.  

On Figures \ref{fig:x1} and \ref{fig:x2},  the blue curves are the solutions  $y^{+,n}$ connecting the stationary points $\xi_{1,n}$ to  periodic 
solutions. The green curves are the solutions $y^{-,n}$  connecting $\xi_{1,n}$ to $0$. The black one is the stationary solution $\xi_{1,n}$.  
 The projections  $\mathbb{R}\ni t\mapsto (y^{+,n}(t),y^{+,n}(t-1))\in \mathbb{R}^2$ are depicted on the right of the figures. 

\begin{figure}[h]
	\centering
	\includegraphics[width=\linewidth]{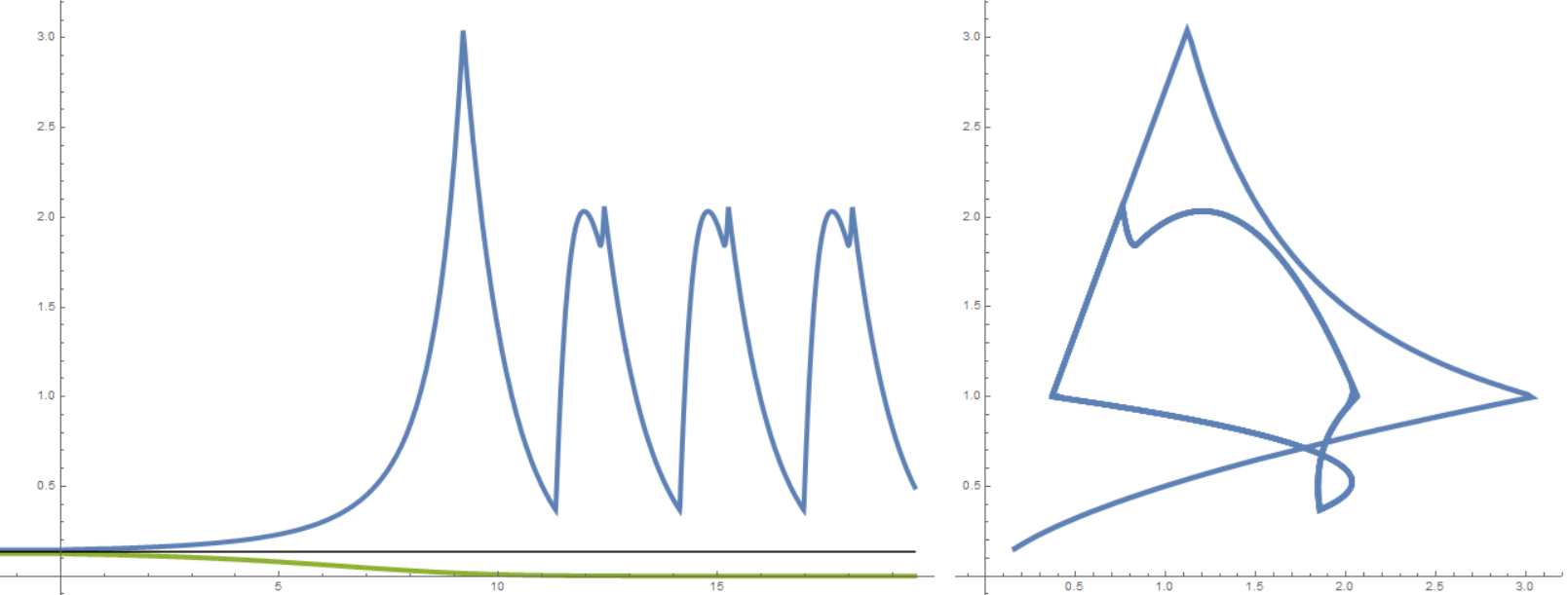}
	\caption{The case $a=1$, $b=7.38$, $n$ large. 	
	}
	\label{fig:x1}
\end{figure}

\begin{figure}[h]
	\centering
	\includegraphics[width=\linewidth]{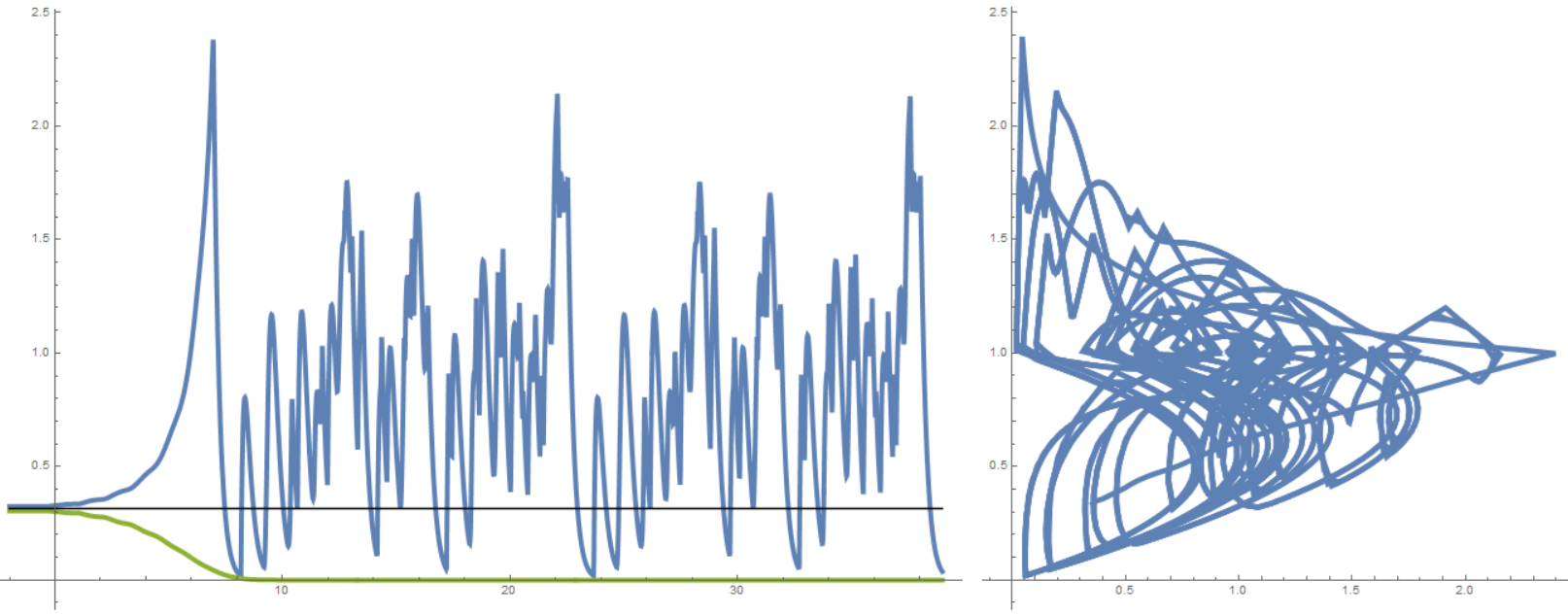}
	\caption{
The case $a=4$, $b=12.71$, $n$ large. 	The structure of the periodic solution looks complicated, although the corresponding periodic orbit $\mathcal{O}^n$ is stable. 	
	}
	\label{fig:x2}
\end{figure}

On Figures \ref{fig:x4} and \ref{fig:x3}, $a$ is close to $c=\frac{5\pi}{3\sqrt{3}}$, for which condition $(C_g)(c)$ holds independently of $d$.  
Theorem \ref{thm:Hopf} guarantees the 
existence of a periodic orbit $\mathcal{Q}^n$. 
The blue curves are solutions $y^{+,n}$ on the 2-dimensional unstable set 
		of the periodic orbit $\mathcal{Q}^n$, connecting $\mathcal{Q}^n$ 
		to $\mathcal{O}^n$. 
		The green curves are  solutions $y^{-,n}$ on the 2-dimensional unstable set 
		of the periodic orbit $\mathcal{Q}^n$, connecting $\mathcal{Q}^n$ 
		to $\hat{0}$. 
		The orange curves are the periodic solutions $q^n$.  

\begin{figure}[h]
	\centering
	\includegraphics[width=\linewidth]{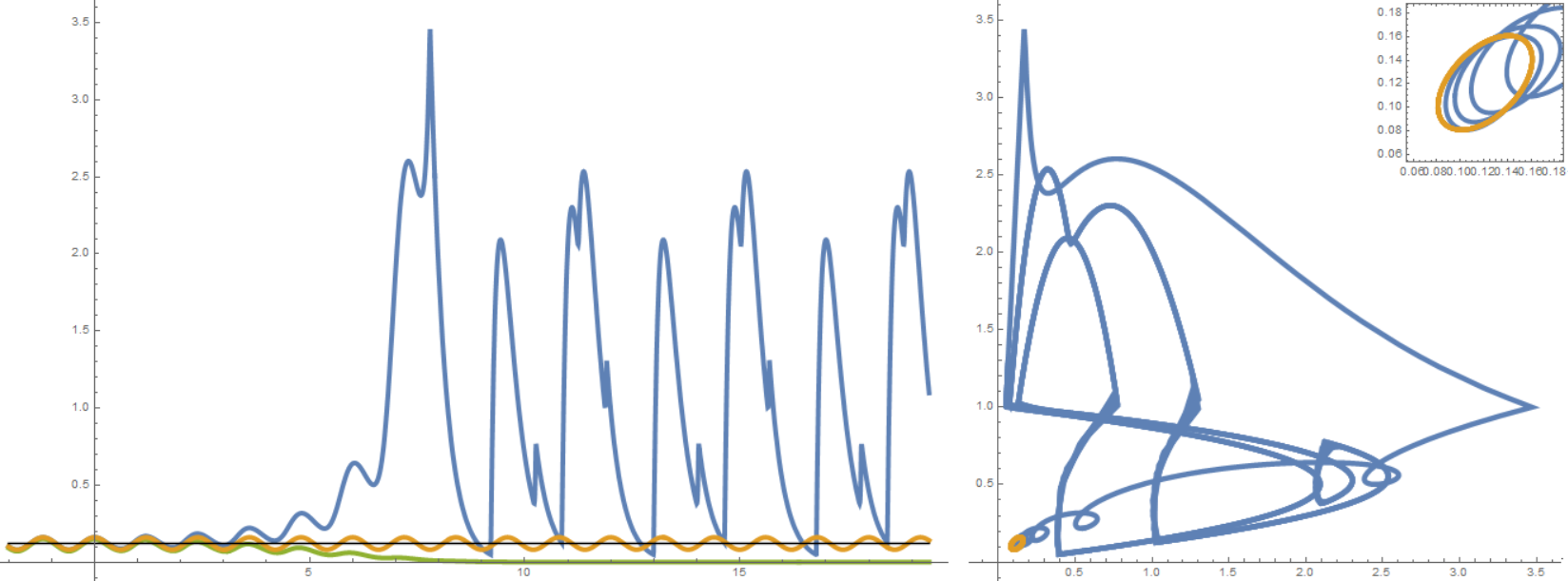}
	\caption{
		The case 
		$a\approx \frac{5\pi}{3\sqrt{3}}$, $b\approx 25$, $n$ large. 	
	}
	\label{fig:x4}
\end{figure}

\begin{figure}[h]
	\centering
	\includegraphics[width=\linewidth]{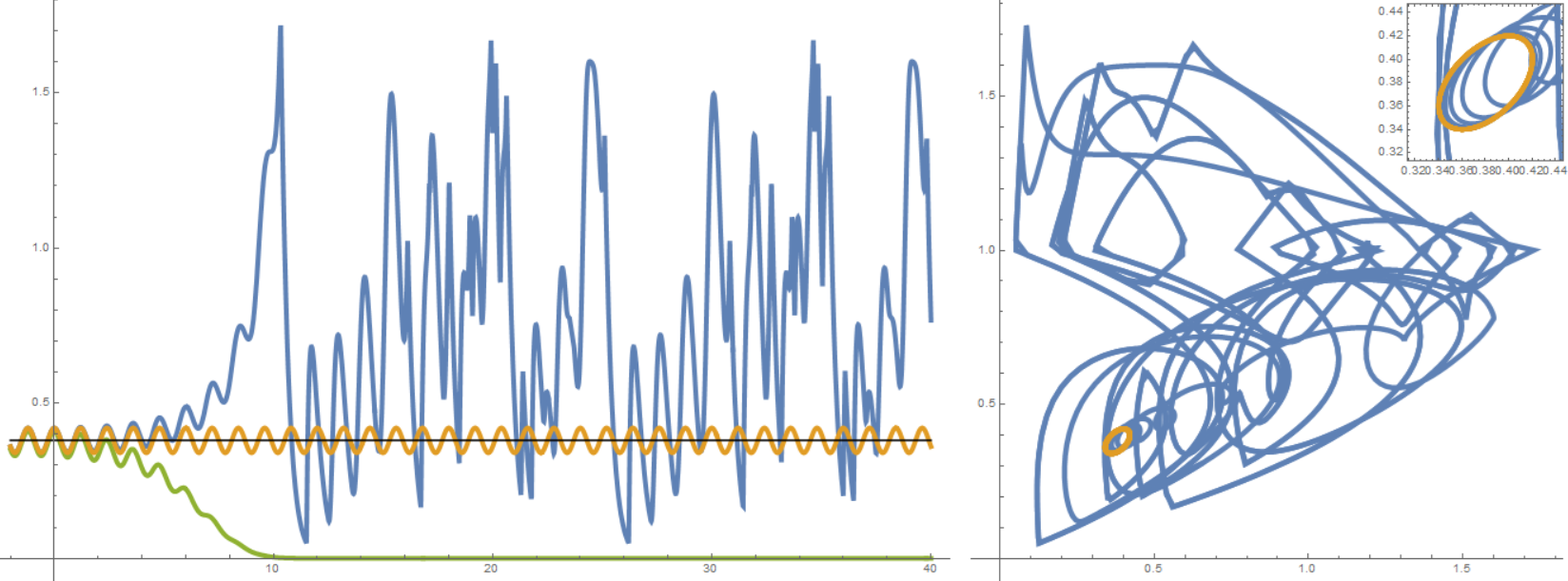}
	\caption{
The case 
$a\approx \frac{5\pi}{3\sqrt{3}}$, $b\approx 7.95$, $n$ large. 	
	}
	\label{fig:x3}
\end{figure}

\section*{Acknowledgement}
G. Benedek has been supported by the M\'oricz Doctoral Scholarship, 2022-23.
The reserach of T. Krisztin
 has been supported by the National Research, Development and Innovation Fund, Hungary [project no.  TKP2021-NVA-09, and grants K-129322, KKP-129877] and by the National Laboratory for Health Security [RRF-2.3.1-21-2022-00006].


\end{document}